\newtheorem{dummy}{dummy}[section]
\newtheorem{lemma}[dummy]{Lemma}
\newtheorem{theorem}[dummy]{Theorem}
\newenvironment{customthm}[1]
  {\innercustomthm}
  {\endinnercustomthm}
\newtheorem{corollary}[dummy]{Corollary}
\newtheorem{proposition}[dummy]{Proposition}
\theoremstyle{definition}
\newtheorem{definition}[dummy]{Definition}
\newtheorem*{definition*}{Definition}
\newtheorem{example}[dummy]{Example}
\newtheorem{remark}[dummy]{Remark}
\newtheorem{assumption}[dummy]{Assumption}
\newcommand{\bA}{\mathbb{A}}
\newcommand{\bG}{\mathbb{G}}
\newcommand{\bN}{\mathbb{N}}
\newcommand{\bP}{\mathbb{P}}
\newcommand{\bQ}{\mathbb{Q}}
\newcommand{\bR}{\mathbb{R}}
\newcommand{\bZ}{\mathbb{Z}}
\newcommand{\cA}{\mathcal{A}}
\newcommand{\cC}{\mathcal{C}}
\newcommand{\cD}{\mathcal{D}}
\newcommand{\cF}{\mathcal{F}}
\newcommand{\cI}{\mathcal{I}}
\newcommand{\cJ}{\mathcal{J}}
\newcommand{\cK}{\mathcal{K}}
\newcommand{\cM}{\mathcal{M}}
\newcommand{\cO}{\mathcal{O}}
\newcommand{\cP}{\mathcal{P}}
\newcommand{\cR}{\mathcal{R}}
\newcommand{\cT}{\mathcal{T}}
\newcommand{\cU}{\mathcal{U}}
\newcommand{\cX}{\mathcal{X}}
\newcommand{\cY}{\mathcal{Y}}
\newcommand{\cZ}{\mathcal{Z}}
\newcommand{\Spec}{\mathrm{Spec}\,}
\newcommand{\Proj}{\mathrm{Proj}}
\newcommand{\Hom}{\mathrm{Hom}}
\newcommand{\Perf}{\mathrm{Perf}}
\newcommand{\Sch}{\mathrm{Sch}}
\newcommand{\Div}{\mathrm{Div}}
\newcommand{\id}{\mathrm{id}}
\newcommand{\val}{\mathrm{val}}
\newcommand{\gp}{\mathrm{gp}}
\newcommand{\Qcoh}{\mathrm{Qcoh}}
\newcommand\radice[2][\relax]{\hspace{-1.5pt}\sqrt[\uproot{2}#1]{#2}}
\newcommand{\Par}{\mathrm{Par}}
\newcommand{\Coh}{\mathrm{Coh}}
\newcommand\Db[1]{\Perf(#1)}
\newcommand\irs[1]{\radice[\infty]{#1}}
\newcommand{\LRS}{\mathrm{LRS}}
\begin{document}

\author[Scherotzke]{Sarah Scherotzke}
\address{Sarah Scherotzke, 
Mathematical Institute of the University of M\"unster, 
Einsteinstrasse 62, 
48149 M\"unster,
Germany}
\email{\href{mailto:scherotz@math.uni-muenster.de}{scherotz@math.uni-muenster.de}}

\author[Sibilla]{Nicol\`o Sibilla}
\address{Nicol\`o Sibilla, Max Planck Institute for Mathematics, Bonn, Germany \\ 
and School of Mathematics, Statistics and Actuarial Sciences\\ 
University of Kent\\ 
Canterbury, Kent CT2 7NF\\
UK}
%\address{School of Mathematics, Statistics and Actuarial Sciences\\ 
%Cornwallis Building\\ 
%University of Kent\\ 
%Canterbury, Kent CT2 7NF\\
%UK}
\email{\href{mailto:N.Sibilla@kent.ac.uk}{N.Sibilla@kent.ac.uk}}

\author[Talpo]{Mattia Talpo}
\address{Mattia Talpo, Department of Mathematics\\
Simon Fraser University\\
8888 University Drive\\
Burnaby BC\\
V5A 1S6 Canada \\ and Pacific Institute for the Mathematical Sciences \\ 4176-2207 Main Mall \\ Vancouver BC V6T 1Z4 \\ Canada
}

\email{\href{mailto:mtalpo@sfu.ca}{mtalpo@sfu.ca}}

\title[Logarithmic derived McKay]{On a logarithmic version of the \\ derived McKay correspondence}

\subjclass[2010]{14F05, 14D06, 14E16}

\keywords{McKay correspondence, semistable degenerations, parabolic sheaves}

\begin{abstract}
We globalize the derived version of the McKay correspondence of Bridgeland-King-Reid, proven by Kawamata in the case of abelian quotient singularities, to certain log algebraic stacks with locally free log structure. The two sides of the correspondence are given respectively by the infinite root stack and by a certain version of the valuativization (the projective limit of every possible log blow-up). Our results imply, in particular, that in good cases the category of {coherent} parabolic sheaves with rational weights is invariant under log blow-up, up to Morita equivalence.
\end{abstract}

\maketitle

\tableofcontents

\section{Introduction}
\label{sec:outline}

In this paper we investigate a logarithmic version of the derived McKay correspondence.  We show that the categories of coherent sheaves on two geometric objects that are naturally associated with a log scheme,  its \emph{(reduced)  valuativization}    
and its \emph{infinite root stack}, are derived Morita equivalent.  {Furthermore, we prove that} when the logarithmic structure is induced by a flat family, this equivalence restricts to the fibers. The key technical input is an asymptotic form of the derived McKay correspondence for abelian quotient singularities.   
 %As an application, we prove that the {bounded} derived category of  {coherent} parabolic sheaves {with rational weights} 
% over a smooth variety equipped with a normal crossings divisor is invariant under log blow-ups. 
{As a main application we prove the surprising result that the category of parabolic sheaves satisfies a categorified form of excision. More precisely we show that, if $X$ is a  smooth variety equipped with a normal crossings divisor $D,$ the {bounded} derived category of  {coherent} parabolic sheaves {with rational weights}  over $(X,D)$ is invariant under log blow-ups.} 
In the remainder of this introduction we give a more detailed summary of our results, and discuss motivations and future 
 directions. 
 
{Throughout the paper, we work over an algebraically closed field $k$ of characteristic zero.}

\subsection{Logarithmic McKay correspondence} 
The origins of logarithmic geometry go 
back to  
work in arithmetic geometry by Deligne, Faltings, Fontaine-Illusie, Kato and others, in the 1980's. Although initially designed for arithmetic applications, in the last twenty years 
logarithmic geometry has become a key organizing principle in areas as diverse as moduli theory, deformation theory, mirror symmetry, and tropical and symplectic geometry. 
Log schemes are hybrids, they incorporate simultaneously algebro-geometric and combinatorial information. For this reason many aspects of their 
geometry are subtle. This has motivated 
the introduction of several auxiliary geometric objects, which are naturally associated with log schemes and capture some aspects of their geometry, but are more amenable to ordinary geometric techniques. For instance, Kato and Nakayama explained in \cite{KN} how to attach to a complex log scheme $X$ a topological space, nowadays called the \emph{Kato-Nakayama space}, which can be viewed as a sort of underlying topological space of $X$. 
In a different direction, Olsson 
has studied certain kinds of classifying stacks associated with log schemes, and used them in particular to define the cotangent complex in the logarithmic setting, see \cite{Ols, Ol}.

In this paper we will consider two other geometric objects which are associated in a natural way with a log scheme $X$: the \emph{infinite root stack} 
$\irs{X}$, and the \emph{valuativization}   $X^{\val}$.  
The infinite root stack  was introduced by Talpo and Vistoli in \cite{TV}. As shown in \cite{CSST} and in \cite{TaV, formalism}, $\irs{X}$ can be viewed as an algebro-geometric incarnation of the Kato-Nakayama space. Formally, it is defined as a limit of root 
constructions along the support of the log structure of $X$. 
The valuativization  
was defined by Kato in \cite{katov}, and was used by Kato, Pahnke \cite{pahnke}, Olsson \cite{olsson} and others to study compactifications of moduli problems. 
The space $X^{\val}$ can be defined as the limit of all log blow-ups of $X$.

Our main  result shows that the categories of coherent sheaves on $\irs{X}$ and on (a ``reduced'' variant of) 
$X^{\val}$ are derived Morita equivalent. 
This holds under some assumptions on $X$, which can be phrased as requiring that $X$ is the total space of a 
\emph{simple log semistable  
degeneration}. 
We explain next the construction of $\irs{X}$ and 
$X^{\val}$  in some more detail. We will then introduce simple log semistable  morphisms and state our main theorem.

Our results also apply to certain log algebraic stacks, but for simplicity we will stick to log schemes in this introduction.

\subsubsection*{\textbf{The infinite root stack}}
Let $X$ 
be a fine and saturated log scheme. The 
$n$-th root stack of $X$,  which is denoted by $\radice[n]{X}$, is an algebraic stack obtained  by extracting all the $n$-th roots of the divisors in the non-trivial locus of the log structure of $X$. Intuitively this construction adds a stabilizer $\mu_n^l$ to every point where the rank of the log structure   is exactly $l$.   

\begin{example}\label{example:root.stacks}
Let $X=\Spec A$ be an affine scheme with log structure induced by a smooth divisor $D$ with global equation $f=0$ for some $f\in A$.  Then $\radice[n]{X}$ is isomorphic to the quotient stack $$[\Spec ( A[x]/(x^n-f))\, /\, \mu_n],$$ where the $\mu_n$-action is via the obvious grading. If $X$ is a Riemann surface, then $D$ is given by a finite number of points $p_1,\hdots, p_k$, and $\radice[n]{X}$ can be seen as the orbifold obtained by replacing a small disk $\Delta$ centered in $p_i$ with the orbifold chart $[\Delta/\mu_n]$, where $\mu_n$ is acting on $\Delta$ by multiplication.

If $D$ is not smooth, we use a different and more general construction, developed by Borne and Vistoli (following ideas of Olsson) in \cite{borne-vistoli}, that ``separates the branches'' of $D$, and also applies to arbitrary log schemes.
%\textcolor{red}{
%If $D$ is not smooth, we will need to use a different construction that ``separates the branches'' of $D$. For example, if $D$ is union of two smooth components $D_1$ and $D_2$ with equations $f_i=0$ and intersecting transversely, then $\radice[n]{X}$ is the quotient stack $$[\Spec ( A[x_1,x_2]/(x_1^n-f_1, x_2^n-f_2))\, /\, \mu_n\times \mu_n],$$ where each copy on $\mu_n$ is acting on the corresponding variable. In the even more general case where components of $D$ have self-intersections or the log structure is more complicated, we need to rely on the general construction due to Borne and Vistoli (following ideas of Olsson) in \cite{borne-vistoli}.
%}
\end{example}
There is a natural map $\radice[n]{X}\to X$, which is a coarse moduli space morphism. 
Further, if  $n\mid m$ we have a natural projection $\radice[m]{X}\to \radice[n]{X}$. The infinite root stack is the limit 
$\irs{X}=\varprojlim_n \radice[n]{X},$ where the stabilizers, instead of being finite, are profinite groups.  
As in \cite{CSST} we will mostly think of $\irs{X}$ as a 
pro-object in algebraic stacks.

\subsubsection*{\textbf{The valuativization}}
Let $X$ be a log scheme as above. The valuativization 
$X^{\val} \to X$ is the limit of all \emph{log blow-ups} $X_\cI$ of $X$. Log blow-ups are the log geometric version of 
blow-ups, and they are indexed by coherent sheaves of ideals $\cI\subseteq M$ in the sheaf of monoids $M$. In some cases, the log blow-up along $\cI$ can be constructed by taking the ordinary blow-up of $X$ along the sheaf of ideals $\langle \alpha(\cI)\rangle\subseteq \cO_X$ generated by the image of $\cI$ via the map $\cI\to M\xrightarrow{\alpha} \cO_X$, together with a natural log structure. For example when the log structure of $X$ comes from a boundary divisor,  log blow-ups coincide with ordinary blow-ups along higher codimensional strata of the boundary. There is a partial order on coherent sheaves of ideals $\cI\subseteq M$, and the resulting index set is filtered. The valuativization of $X$ is the  limit $X^\val=\varprojlim_\cI X_\cI$.

If $X$ is a log scheme, one can take the inverse limit of the blow-ups $X_\cI$ in the category of locally ringed spaces. The resulting locally ringed space $X^\val$ is rarely a scheme, but one can still talk about some of its geometric properties (for example separatedness, compactness, etc.), and they turn out to be related to the geometry of the log scheme $X$ (see \cite{katov}). In this paper we will instead regard $X^\val$ as a ``formal'' limit, that is, 
as a pro-object in schemes. 
For technical reasons, it will be important to apply this  valuativization process to stacks as well. In that case, we will take the inverse limit in the 2-category of fibered categories over the category of $k$-schemes.

Since both $\irs{X}$ and $X^{\val}$ are pro-objects in stacks, we can formally define their categories of perfect complexes as  colimits of dg-categories
$$
\Db{\radice[\infty]{X}}:=\varinjlim_n \Db{\radice[n]{X}}, \quad   \Db{X^\val}:=\varinjlim_\cI \Db{X_\cI}. 
$$
We do not claim that this is a reasonable definition of the category of perfect complexes for an arbitrary pro-stack. However, in the case of $\irs{X}$ and $X^{\val}$,  this formula  can be justified based on our finer understanding of these colimits. We refer the reader to the main body of the paper for a  discussion of this point (see (\ref{sec:dg.categories})).

\subsubsection*{\textbf{The main theorem}}  
{Before stating our first main result (Theorem \ref{main}), let us clarify the assumptions we need to make on the log scheme $X.$ These are best expressed}  
%The assumptions we need to make on the log scheme $X$ are best expressed 
by saying that $X$ comes equipped with a \emph{simple log semistable} morphism of log schemes $f:X \rightarrow S$ for a log flat base $S$, see below for the definition. This viewpoint fits well  with the picture that motivated us, where $X$ is the total family of a log smooth degeneration. We  refer the reader  to the main body of the paper for a more  general setting of our result and for an intrinsic characterization of the properties of $X$ which makes no mention of the morphism $f$ (see Section \ref{sec:simple.log.ss}).
\begin{definition*}[Definition \ref{def:simple.log.ss.1}]
Let $X$ and $S$ be fine saturated log schemes over $k$.  
A morphism of log schemes  $f\colon X\to S$ is \emph{simple log semistable} if $f$ is log smooth and vertical, and 
for every geometric point $x\to X$ with image $s=f(x)\to S$ with non-trivial log structure, there are isomorphisms $\overline{M}_{S,s}\cong \bN$ and $\overline{M}_{X,x}\cong \bN^{r+1}$ with $r\geq 0$, such that the map $\overline{M}_{S,s}\to \overline{M}_{X,x}$ is identified with the $r$-fold diagonal map $\bN\to \bN^{r+1}$.
\end{definition*}

The $r$-fold diagonal map $\bN\to \bN^{r+1}$ corresponds to the toric morphism $\bA^{r+1}\to \bA^1$ given in coordinates by $(x_1,\hdots,x_{r+1})\mapsto x_1\cdots x_{r+1}$, which is a kind of ``universal'' local model for the  degenerations that we will be interested in.

From now on we will assume that $X$ comes equipped with a simple log semistable morphism of log schemes $f:X \rightarrow S$ over a base $S$ that is log flat over the base field $k$ (this is a mild technical assumption, see Section \ref{sec:simple.log.ss} for details). 
Morally, our main result shows that  
the categories of perfect complexes over the infinite root stack and the valuativization of $X$  are equivalent. 
In actuality, however, this is not quite the right statement. 
From a geometric point of view, the issue has to do with the fact that the fibers of both the modified families $\radice[\infty]{X}\to S$ and $X^\val\to S$ are not reduced. 
In order to fix this, we need to make the preliminary step of extracting every possible root of the log structure of the base $S$. If the base $S$ is equal to $\Spec R$ for a DVR $R$ with uniformizer $\pi$,  with the log structure given by the divisor $\{\pi=0\}$, forming $\radice[n]{S}$ is closely related to extracting an $n$-th root of $\pi$, as should be apparent from Example \ref{example:root.stacks}. This operation should be familiar, for example, from the theory of semistable reduction \cite{KKMS}.

Let us consider the infinite root stack $\radice[\infty]{S}$. Pulling back the family $f\colon X\to S$ along the natural map $\radice[\infty]{S}\to S$, we obtain a morphism $$X_\infty:=X\times_S \irs{S} \to \radice[\infty]{S}.$$ Note that $X_\infty$ is not a scheme anymore, as it picks  up the stackiness of the base $\irs{S}$.  We consider its infinite root stack 
$\radice[\infty]{X_\infty}$, 
and its valuativization  
$X_\infty^\val$. We  call $X_\infty^\val$ the \emph{reduced valuativization} of $X$. While 
$X_\infty^{\val}$ differs from the ordinary valuativization of $X$,   there is a canonical isomorphism of log stacks $\radice[\infty]{X_\infty}\simeq \radice[\infty]{X}$. 
{Our main result  {states} that these two objects are derived equivalent. This is a  {``global''} version of the  {derived} McKay correspondence for abelian quotient singularities. We refer the reader to \cite{kapranov-vasserot} and \cite{BKR} for background on the classical derived McKay correspondence.} 

\begin{customthm}{A}[Theorem \ref{thm:main}]
\label{main}
With the notations discussed above, there is an equivalence of dg-categories
$$
 \Db{X_{\infty}^{\val}}\simeq  \Db{\radice[\infty]{X}}.
$$
\end{customthm}

A global version of the McKay correspondence in the toroidal case was proved by Kawamata in \cite{Ka}. In a way our techniques yield a generalization of his results, as they apply to a wider variety of log schemes, but the emphasis in our theorems is different. Indeed, we are mostly interested in asymptotic statements where the size of the abelian groups that are acting 
is allowed to go to infinity. On the other hand,  
the key technical input in the proof of this result is 
Kawamata's proof of the abelian McKay correspondence for quotients of $\bA^n$ by finite subgroups of the torus.

{A surprising consequence of Theorem \ref{main} is that,  {if $(Y,D)$ is the log scheme given by a smooth variety $Y$ equipped with a normal crossings divisor $D$, then the category of perfect complexes over $\radice[\infty]{(Y,D)}$ is invariant under certain log blow-ups}. This has interesting applications to the theory of parabolic sheaves, and to the K-theory of {these kinds of} log schemes, which  we will discuss in the next section.}

Before proceeding, we clarify the nature of the objects appearing in our main result by working out explicitly the case of 
$X=\bA^2$ equipped with the toric log structure. 

\begin{example}\label{examp:a2}
Consider the log scheme $X=\bA^2$ equipped with the toric log structure, together with the map $\bA^2\to \bA^1$ given by $(x,y)\mapsto xy$, where the base $\bA^1$ also has the toric log structure. Thus, we are viewing $\bA^2$ as the total space of the log smooth degeneration of a $\bG_m$ to the union of the two coordinate axes. Note that this is the local model for a smoothing of a node in a curve.

The stack $X_\infty$ is  the inverse limit of the stacks $X_n:=\bA^2\times_{\bA^1} \radice[n]{\bA^1}.$ These can be described explicitly as follows. Consider the affine scheme $\overline{X_n}:=\Spec k[x,y,t_n]/(xy-t_n^n)$, which is the standard affine $A_{n-1}$ surface singularity. Then $X_n$ is the quotient stack of $\overline{X_n}$ by the natural action of $\mu_n$ (via the variable $t_n$). 

There are two ways to desingularize the stack $X_n$. 
The first way consists in considering the $n$-th root stack $\radice[n]{\bA^2}=[\Spec k[x_n,y_n]\, /\, \mu_n^2]$, where $x_n^n=x$ and $y_n^n=y$, and the action of $\mu_n^2$ is the natural one. There is an induced map 
$$
\xymatrix{
\radice[n]{\bA^2}
\ar[r] \ar@/^1pc/[rr] & X_n \ar[r] & \radice[n]{\bA^1}.} 
$$
Note that passing from $\bA^2\to \bA^1$ to $\radice[n]{\bA^2}\to \radice[n]{\bA^1}$ produces a local model for families of balanced twisted nodal curves, in the sense of \cite{abramovich-vistoli}. 
The other way to desingularize $X_n$ is to consider a blow-up: the $A_{n-1}$ singularity has a canonical minimal smooth resolution $C_n$, whose exceptional divisor is a chain of $(n-1)$ copies of $\bP^1$. Moreover the resolution is toric, and it inherits an action of $\mu_n$, whose quotient stack $\cC_n:=[C_n\,/\, \mu_n]$ maps to $X_n$. 

The two objects $\irs{X}$ and $X_\infty^\val$ in our main theorem are, in this case, the inverse limits of the systems $\{\radice[n]{\bA^2}\}_{n\in \bN}$ and $\{\cC_n\}_{n\in \bN}$ respectively, where $\bN$ is ordered by divisibility. A version of the derived McKay correspondence in the toric setting implies that $\cC_n$ and $\radice[n]{\bA^2}$ are derived equivalent, and passing to the limit we obtain the statement of the Theorem \ref{main} in this particular case (we refer to Section \ref{sec:local} for details and the general case of $\bA^{r+1}$). \end{example}

\subsection{{Categorified excision for parabolic sheaves}}%{Parabolic bundles and log blow-ups}
Parabolic bundles on curves were introduced by Mehta and Seshadri in the late 70's in order to formulate the Narasimhan-Seshadri correspondence in the case of a punctured Riemann surface. The current understanding of the theory is that, for a pair  $(Y,D)$ given by an algebraic variety $Y$ and a Cartier divisor $D$ (or more generally a fine saturated log scheme), 
 it is possible to define
an abelian category $\Par(Y, D)$ of parabolic coherent sheaves over  $(Y,D)$ with rational weights. Work of Biswas, Simpson, Borne, Vistoli, Talpo and others has clarified that
$\Par(Y, D)$ is equivalent to the abelian category of coherent sheaves over the infinite root stack 
$\irs{(Y,D)}$. The literature on these aspects is vast, but we refer the reader for instance to \cite{biswas1997parabolic}, \cite{borne2007fibres}, \cite{borne-vistoli}, \cite{TV}, \cite{talpo2014moduli} and \cite{realweights} for additional information. 

For technical convenience, we will assume that 
$Y$ is smooth, and that $D$ is given by a normal crossings divisor. 
Parabolic sheaves over a pair $(Y, D)$ model coherent sheaves on 
the complement $Y\setminus D$ that have prescribed asymptotic behavior close to the boundary (encoded by filtrations along the boundary components and real numbers, called ``weights''). 
This suggests that the category of parabolic sheaves should be in some way
independent of the actual shape of the boundary $D$, although this is far from clear.

Our main application is a  {first} concrete,  {and quite unexpected,}  articulation of this principle, where we show  that the category of parabolic sheaves is invariant under log blow-ups (up to Morita equivalence). {This should be viewed as a categorified form of excision for parabolic sheaves.} 
 We formulate the precise statement below.

\begin{customthm}{B}[{Proposition} \ref{cor:parabolic}]
\label{mainC}
Let $(Y,D)$ be a pair given by a smooth variety $Y$ over $k$ equipped with a normal crossings divisor $D$. Let $(Y',D')\to (Y,D)$ be a log blow-up, such that $(Y',D')$ is again a smooth variety with a normal crossings divisor. 
Then there is an equivalence 
$$
D^b(\Par(Y,D)) \simeq D^b(\Par(Y',D'))
$$
between the bounded derived categories of  coherent  parabolic sheaves with rational weights.\end{customthm}

{
Theorem \ref{mainC} is closely related to earlier results by other authors.  By results of Fujiwara and Kato (see \cite{ill} Theorem 6.2),  Kummer \'etale cohomology with finite coefficients   is 
invariant  under log 
blow-ups.  An analogous statement in the setting of log syntomic cohomology was proved by Nizio\l \mbox{} in \cite{niziol2008semistable}. Our result has immediate K-theoretic consequences. Namely, it implies that the algebraic K-theory of the Kummer-flat site of $(Y,D)$ is invariant under a large class of log blow-ups. We refer the reader to Corollary \ref{cor:kth} in the main text for more details on this.}

Abramovich and Wise have shown  in  \cite{abramovichi} that logarithmic Gromov-Witten invariants {do not change under} log blow-ups. 
It has been suggested that a parallel theory of logarithmic Donaldson-Thomas invariants should also exist, although many aspects are still to be understood.  
It is reasonable to  expect that, if such a theory existed,  parabolic sheaves would play in that  context a  role  analogous to ordinary coherent sheaves in classical Donaldson-Thomas theory. Then Theorem \ref{mainC} might perhaps {indicate} that logarithmic Donaldson-Thomas invariants should also be insensitive to log blow-ups, exactly as logarithmic Gromov-Witten invariants.

\subsection{Restriction to the central fiber}
Degeneration techniques 
have been ubiquitous in  algebraic 
geometry since the nineteenth century. Generally speaking, their purpose is to study complex geometric objects by breaking them down into simpler pieces. 
Formally this is achieved by 
creating singularities via flat deformations. 
In the last fifteen years 
these methods have been applied with great success to 
areas of 
enumerative geometry 
such as 
Gromov-Witten and Donaldson-Thomas theory, see for instance \cite{Li1, Li2, AF}. 
One important lesson to be learned from these results is that the 
central fiber of a deformation, by itself, is not sufficient to encode all the relevant geometric information. 
Additional data are needed, and they take the shape of various different \emph{enhancements} of 
the central fiber.

Most work in the literature is about the simplest case of a flat degeneration, where the central fiber breaks down as the union of two irreducible components $C_1$ and $C_2$ intersecting transversally. 
 We have two main {techniques of enhancement} there. 
The first involves working with a full system of 
\emph{expanded degenerations}, as in \cite{Li1, Li2}, which are modified central fibers with extra irreducible components wedged in between $C_1$ and 
$C_2.$ These can be viewed as the central fibers of modified deformations   
obtained by blowing-up  the original total family along the singular locus of the central fiber.
In the second  approach the central fiber is enhanced instead by introducing stackyness 
along the common divisor of $C_1$ and $C_2,$ as in \cite{AF}. These methods are not mutually exclusive, they can be combined together (see \cite[Section 2.2]{AF}).

A complementary point of view  on enhancements comes from the study of moduli problems on degenerate algebraic varieties, and in particular on nodal curves, which are the example  that  has been  studied most 
extensively in the literature. 
Here a similar geometry also emerges. Due to the presence of singularities 
moduli problems are typically non-compact, but modular compactifications can be achieved  through enhancements: points on the boundary of the compactification 
parametrize objects that  
do not live 
over the degenerate algebraic variety itself, 
but rather on some  expanded or stacky modifications  of it. 
This point of view can be traced back to work of Gieseker on moduli of vector bundles on nodal curves \cite{gie}. In order to construct modular compactifications, Gieseker  looked at parametrizations of vector bundles on a system of expanded degenerations, where extra copies of 
$\bP^1$ are wedged in between the two branches of the node. More recent appearances of this perspective, in the context of moduli of principal bundles on nodal curves, can be found for instance in work of Martens and Thaddeus \cite{matha2} and Solis \cite{solis}, which make use of  a mix of expanded and stacky techniques.

%%%%%%%%%%%%%%%%%%%%%%%%%

\begin{figure}[h]
\begin{center}
\begin{tikzpicture}[scale=2.5]

\clip (-.35,-0.5) rectangle (3.2, 1.78);

\draw [line width=.35mm] (0,.5) -- (1,.5);

\draw [line width=.35mm] (0,1.75) -- (1,1.75);

\draw [line width=.35mm] (1,.5) -- (1,1.75);

\draw [line width=.5mm] (0,-.25) -- (1,-.25);

\draw [line width=.5mm]  (1.05,-.25) -- (1.1,-.25);

\draw [line width=.5mm]  (1.15,-.25) -- (1.2,-.25);

\draw [line width=.5mm]  (-.12,-.25) -- (-.07,-.25);

\draw [line width=.5mm]  (-.22,-.25) -- (-.17,-.25);

\draw [line width=.5mm] (2,-.25) -- (3,-.25);

\draw [line width=.5mm]  (3.05,-.25) -- (3.1,-.25);

\draw [line width=.5mm]  (3.15,-.25) -- (3.2,-.25);

\draw [line width=.5mm]  (2-.12,-.25) -- (2-.07,-.25);

\draw [line width=.5mm]  (2-.22,-.25) -- (2-.17,-.25);

\draw  [line width=.35mm] (2,.5) -- (3,.5);

\draw [line width=.35mm] (2,1.75) -- (3,1.75);

\draw [line width=.35mm] (3,.5) -- (3,1.75);

\tkzDefPoint(0,.5){A}\tkzDefPoint(.05,0.8125){B}\tkzDefPoint(-.2,1.2){C}
\tkzCircumCenter(A,B,C)\tkzGetPoint{O}
\tkzDrawArc[color=black, line width=.7mm](O,A)(C)

\tkzDefPoint(0,1.75){D}\tkzDefPoint(.05,1.4375){E}\tkzDefPoint(-.2,1.05){F}
\tkzCircumCenter(D,E,F)\tkzGetPoint{P}
\tkzDrawArc[color=black, line width=.7mm](P,F)(D)

\draw[ultra thick, ->] (.5, .25) -- (.5, 0);

\draw[ultra thick, ->] (2.5, .25) -- (2.5, 0);

\tkzDefPoint(2,1.75){A}\tkzDefPoint(2.03,1.67){B}\tkzDefPoint(2-.1,1.52){C}
\tkzCircumCenter(A,B,C)\tkzGetPoint{O}
\tkzDrawArc[color=black, line width=.7mm](O,C)(A)

\tkzDefPoint(2-.1,1.6){A}\tkzDefPoint(2-0.3,1.4){B}\tkzDefPoint(2-.15,1.32){C}
\tkzCircumCenter(A,B,C)\tkzGetPoint{O}
\tkzDrawArc[color=red, line width=.7mm](O,C)(A)

\tkzDefPoint(2-.1,.77){A}\tkzDefPoint(2.03,.6){B}\tkzDefPoint(2,.5){C}
\tkzCircumCenter(A,B,C)\tkzGetPoint{O}
\tkzDrawArc[color=black, line width=.7mm](O,C)(A)

\tkzDefPoint(2-.15,.95){A}\tkzDefPoint(2-.03,.8){B}\tkzDefPoint(2-.1,.7){C}
\tkzCircumCenter(A,B,C)\tkzGetPoint{O}
\tkzDrawArc[color=red, line width=.7mm](O,C)(A)

\fill (0,-.25) circle (1pt);
\fill (2,-.25) circle (1pt);

\fill (-.1, 1.125)[color=red] circle (1.7pt);

\draw[color=red,  dash pattern=on \pgflinewidth off 3pt, line width=.5mm] (2-.05, 1) -- (2-.05, 1.275);

\end{tikzpicture}
\captionsetup{labelformat=empty}
\caption{The picture shows  two  kinds of enhancements of the same degenerate central fiber: the red dot on the left stands for $[\mathrm{Spec\,} k/\mu_n]$; on the right we have instead $n-1$ extra copies of $\mathbb{P}^1_k$ wedged between the two branches of the node,  also in red.}
\end{center}
\end{figure}

%%%%%%%%%%%%%%%%%%%%%%%%%%%%%%%%%%%%%%%%%%%%%%%%%%%%%%

The  enhancements of the central fiber encode  additional information coming from   nearby fibers. Informally speaking, this has the effect of partially smoothening  out the family. 
The modern perspective is that this can be  accomplished by equipping the central fiber with its  natural log structure and, in the context of Gromov-Witten theory, this point of view has been pursued for instance in  \cite{C, AC, GS}.  
However, as we pointed out,  it is often useful  to work with 
 more geometric incarnations of 
logarithmic objects. We propose that the central fibers of the infinite root stack and of the reduced valuativization can serve this purpose, and   encompass many  previous constructions of enhanced central fibers considered in the literature.  

More precisely, let $f:X \to S$ be a flat degeneration, where
$S= \Spec R$ with $R$ a DVR. 
We claim that the   fibers of $X_\infty^{\val}$ and $\irs{X}$ over the ``closed point''
$0 \in \irs{S}$ (i.e. the unique lift of the closed point of $S$ to the infinite root stack) model  respectively the expanded and the stacky central fibers of the family $f.$ For illustrative purposes, let us focus on the case where the central fiber of $f$ is the union of two irreducible components $C_1$ and $C_2$ glued along a common divisor $D.$ 
Then the fiber $(\irs{X})_0$ is a formal limit 
of root constructions along $D $, and thus captures    
the stacky enhancements of $(X)_0$ considered   in \cite{AF} and elsewhere. On the other hand  $(X_\infty^{\val})_0$ is a pro-object consisting of all  reduced central fibers of the modifications of $f$ given by blow-ups of $X$ supported on $D$. This 
recreates the shape of the expanded central fibers appearing in \cite{gie} and in \cite{Li1}.

Our third main result shows that, from the 
viewpoint of their sheaf theories, these two kinds of enhancements  contain the same amount of information.

\begin{customthm}{C}[Theorem \ref{thm:main1}]
\label{mainB}
Assume that $f\colon X\to S$ is a simple log semistable degeneration, where $S$ is the spectrum of a DVR, and let $0\in \irs{S}$ be 
the closed point. Then the equivalence $\Phi\colon\Db{X_{\infty}^{\val}}\to  \Db{\radice[\infty]{X}} $ restricts to an equivalence $$\Phi_0 \colon  \Db{(X_\infty^\val)_0}\to \Db{(\radice[\infty]{X})_0}.$$
\end{customthm}
 Theorem \ref{mainB} can be viewed as a global version of  earlier  results  on specializations of the derived McKay correspondence to the fibers (as in \cite{BP}), with the usual proviso that  contrary to the existing literature   we are mostly interested   in asymptotic statements. We remark  that our proof depends in a substantial way on these earlier references. 

The purpose of our result is twofold. On the one hand, it shows that two familiar kinds of enhancements of degenerate central fibers  are in fact closely related, as they give rise to equivalent sheaf theories.  This implies  for instance  that, in compactifications of moduli of vector bundles \` a la Gieseker,
perfect complexes 
on either of the two enhancements can be used.

On the other hand, we propose  that 
the (central fibers of) the reduced valuativization and  the  infinite root stack offer  a viable generalization of the existing 
enhancement packages considered in the literature. In particular, we believe that they capture the 
shapes of the enhancements required to handle degeneration patterns 
that are more complicated than the ones most studied in earlier references,  
where the central fiber breaks down as the union of two 
irreducible components intersecting transversely. 
We plan to return to applications of this point of view in future work. 

\begin{remark}
We point out that, in concrete geometric applications (e.g. in \cite{gie} and \cite{Li1}), the relevant geometric objects 
are rather \emph{finite} enhancements of the central fibers, where  the number of additional irreducible components and the size of the isotropy groups are kept finite. As we explained, infinite root stacks and valuativizations are much bigger objects made up of the inverse systems of all finite enhancements and the maps between them. 
\end{remark}

\begin{example}\label{example:elliptic}
Theorem \ref{mainB} can also help 
to fit  in a broader  context existing  observations scattered in the literature. Let us focus on one concrete example, which served as one of our initial motivations for this project. Let $X_0$ be a nodal rational curve of genus one, and  
let 
$\cX_n := [X_0/\mu_n]$ be the stacky quotient of $X_0$ by the action of $\mu_n \subseteq \bG_m.$ Consider   
 the Cartier dual of $\mu_n$, denoted $\bZ_n$, and let 
$E_n$ be the $\bZ_n$-Galois cover of $X_0$. The curve $E_n$ is sometimes called the N\' eron $n$-gon, and consists of a cycle of 
$n$ rational curves intersecting transversely. These objects fit into a  pair  of Galois covers: 
$
(i) \quad X_0 \rightarrow \cX_n \, ,  \quad   (ii)  \quad E_n \rightarrow X_0 \, .
$

As explained  in \cite{Si}, $\cX_n$ and 
$E_n$ have the same homological mirror, which is an $n$-punctured symplectic torus in both cases. Thus, as a consequence of  homological mirror symmetry, we have an equivalence 
$$
\Db{\cX_n} \simeq \Db{E_n}.    
$$ 
A different perspective on 
this equivalence comes from Mukai's  picture of 
derived equivalences of dual abelian varieties. The curve $X_0$ is isomorphic to its own compactified Jacobian, and this yields a non-trivial self-equivalence   of 
$
\Db{X_0}  
$  
which was studied for instance in \cite{burban2005fourier} and  in \cite{Si2}. 
From this perspective, the Galois covers $(i)$ and 
$(ii)$ 
can be interpreted as dual isogenies: 
$
 E_n \rightarrow X_0,  \quad  X_0   \sim  (X_0)^{\vee} \rightarrow \cX_n \sim (E_n)^\vee.  
 $ 
 This indicates that  $E_n$ play the role of 
 the (compactified) Jacobian of $\cX_n$, an observation which was also made by Niles (see Question 7.3 in \cite{niles2013moduli}). Mukai's theory then also suggests  that $E_n$ and $\cX_n$ should be derived equivalent.

Theorem  \ref{mainB} offers a new angle on
this phenomenon, which appears as a manifestation of a much more general comparison result. 
Indeed, let  $f: X \rightarrow \bA^1$ be a flat degeneration of elliptic curves, where the central fiber $X_0$ is a nodal rational curve. Then  Theorem \ref{mainB} applied to $f$ recovers precisely  the equivalence $\Db{\cX_n} \simeq\Db{E_n}$ (or, more accurately, an asymptotic version of it). 
\end{example}

\subsection{Future directions} 
Our results open up many avenues for future research,  
and we  conclude this  introduction by briefly outlining some of them. 
 We expect that Theorem \ref{main} will be useful in clarifying the relationship between several different compactifications and enhancements of moduli spaces appearing in the literature. An important example is provided by the Deligne--Mumford 
moduli stack of genus $g$ stable curves 
$\overline{\cM_g}$. The stack $\overline{\cM_g}$ has a natural log structure given by the boundary divisor. In \cite{chiodo2015n} Chiodo proves that it is possible to 
construct a universal compactified Jacobian over $\overline{\cM_g}$, on condition of enhancing $\overline{\cM_g}$ 
via root constructions along the boundary divisor. 
Holmes achieves a similar result in \cite{holmes2014eron} by looking instead at  towers of log blow-ups of $\overline{\cM_g},$   
which are closely related to the valuativization of $\overline{\cM_g}$. The precise relationship between these two approaches remains elusive at the moment, but it is tempting to guess that Holmes's and Chiodo's constructions might be related by a derived equivalence patterned after our Theorem \ref{main}. Making this idea precise is the subject of work in progress by two of the authors together with Holmes and Smeets.

It would be interesting to establish sharper  versions of our Theorem \ref{mainC}. At the moment we work under quite restrictive assumptions on the log scheme $X$. We believe that these assumptions could be substantially relaxed. Additionally, it would be desirable to extend Theorem \ref{mainC} to categories of parabolic sheaves with real (as opposed to rational) weights. 
As a preliminary step, this requires developing a  new geometric approach to parabolic sheaves with real weights, since root stack techniques are only available 
when weights are rational. We believe that this is of independent interest, and this is also the subject of work in progress. 

\subsection*{Acknowledgements}

We are grateful to Tom Bridgeland for pointing us to the papers \cite{Ka} and \cite{BP}, and to Jonathan Wise for bringing Kato's construction of valuativizations of log schemes of \cite{katov} to our attention in the first place. We are also happy to thank Kai Behrend, David Ben-Zvi, John Calabrese, Sabin Cautis, David Holmes, Luc Illusie, Nathan Ilten, Pavel Safronov, Arne Smeets, Martin Ulirsch, and  Angelo Vistoli for useful exchanges. Finally, we are grateful to the referee for several useful comments and suggestions. The second author wishes to thank the University of Oxford and Wadham College for providing excellent working conditions while part of this project was carried out. 

\subsection*{Notations and conventions}
Every scheme or algebraic stack will be quasi-compact and quasi-separated, and defined over a base field $k$ which is algebraically closed, of characteristic zero, equipped with the trivial log structure.  All fibered categories will be fibered in groupoids. We will state and use several universal mapping properties in a $2$-category: when we write ``unique'' in that context, we will implicitly mean ``in the $2$-categorical sense''. All pullback and pushforward functors between derived categories will be implicitly derived. All monoids will be commutative, and we will typically write the operation additively. We will also assume monoids to be \emph{integral} or \emph{cancellative}, i.e. if $p+q=r+q$, then $p=r$.

%%%%%%%%%%%%%%%%%%%%%%%%%%%%%%%%%%%%%%%%%%%

\section{Preliminaries}

\subsection{Logarithmic schemes}

We give a minimal reminder of the basic definitions regarding log schemes and root stacks. See \cite{kato,abramovich,borne-vistoli,TV} or the Appendix of \cite{CSST} for more details.

A \emph{log scheme} (``log'' is short for ``logarithmic'') is a scheme $X$ together with a sheaf of monoids $M$ on the small \'etale site, and a homomorphism of sheaves of monoids $\alpha\colon M\to \cO_X$ {that restricts} to an isomorphism $\alpha^{-1}\cO_X^\times\to \cO_X^\times$, where $\cO_X$ is equipped with the multiplication. This definition also makes sense if $X$ is only a locally ringed space, using the classical site of $X$ instead of the small \'etale site. %If we want to include the log scheme $X$ in the notation for $\alpha$ and $M$, we will write $\alpha_X$ and $M_X$.
With mild assumptions, a log structure in the above sense can also be seen as a symmetric monoidal functor $L\colon \overline{M}:=M/\cO^\times\to \Div_X$ with trivial kernel (i.e. the only section $a$ such that $L(a)$ is invertible is zero), where $\Div_X$ is the symmetric monoidal stack on $X$ of pairs $(L,s)$ consisting of a line bundle with a global section. For details about this reformulation we refer to \cite[Section 3]{borne-vistoli}.

A \emph{morphism} of log schemes $(X,M,\alpha)\to (Y,N,\beta)$ is a morphism of schemes $f\colon X\to Y$, together with a homomorphism of sheaves of monoids $f^{-1}N\to M$ compatible with $f^{-1}\cO_Y\to \cO_X$. A \emph{strict morphism} is a morphism as above such that the pullback of the log structure of $Y$ is isomorphic to the log structure of $X$ (via the map $f^{-1}N\to M$).

\begin{example}
Let us consider two kinds of log schemes that are especially important in applications. First of all, if  $P$ is a monoid, the scheme $\Spec k[P]$ has a natural log structure induced by the homomorphism $P\to k[P]=\Gamma(\cO_{\Spec k[P]})$. Second, if $X$ is any variety over $k$ equipped with a normal crossings divisor $D\subseteq X$, we obtain a log scheme $(X,D)$ by considering the subsheaf $M\subseteq \cO_X$ of regular functions that  vanish only possibly on $D$. Picking local equations for the branches of $D$ passing through a point $x\in X$ gives local charts for the log structure, with monoid $\bN^n$, where $n$ is the number of branches. In this case the log structure encodes the fact that we see $D$ as a sort of ``boundary'' in $X$.
\end{example}

A \emph{chart} for a log scheme $X$ is a homomorphism of monoids $P\to \cO_X(X)$ that induces the homomorphism $\alpha\colon M\to \cO_X$ in the following sense: from the map $P\to \cO_X(X)$ one obtains a homomorphism of sheaves of monoids $\widetilde{\alpha}\colon \underline{P}\to \cO_X$ where $\underline{P}$ is the constant sheaf with stalks $P$. The induced homomorphism $\underline{P}\oplus_{\widetilde{\alpha}^{-1}\cO_X^\times}\cO_X^\times\to \cO_X$ is a log structure on $X$, that we require to be isomorphic to $\alpha\colon M\to \cO_X$ (here $\oplus$ denotes the pushout in the category of sheaves of monoids). Equivalently, a chart is a strict morphism of log schemes $X\to \Spec k[P]$.

A log scheme $X$ is \emph{quasi-coherent} if  locally it has charts $X\to \Spec k[P]$. It is \emph{coherent} (resp. \emph{integral}, resp. \emph{fine}, resp. \emph{fine and saturated}, resp. \emph{locally free}) if   locally it has charts as before, where the monoid $P$ is finitely generated (resp. integral, resp. finitely generated and integral, resp. finitely generated, integral and saturated, resp. free). If a log scheme is quasi-coherent, the other properties can be checked on the stalks of $\overline{M}$ on geometric points of $X$. A log scheme $X$ is \emph{Zariski} if the log structure comes from the Zariski topos (see \cite[Section 2.1]{niziol}). 

{In the sequel we will often consider also algebraic stacks equipped with a log structure. One can define a log structure on an algebraic stack $X$ exactly as for schemes, but by using the lisse-\'etale site instead. By descent for fine log structures, this turns out to be the same as a log structure on a presentation, together with descent data.} {All the log structures on schemes and stacks in this paper will be integral and quasi-coherent (and often fine and saturated).}  All fibered products of fine saturated log schemes and stacks will be taken in the fine saturated category.

\subsection{{Root stacks}}
The $n$-th \emph{root stack} $\radice[n]{X}$ of a fine saturated log scheme (or algebraic stack) $X$ is the stack over schemes$/k$ that universally parametrizes extensions $\frac{1}{n}\overline{M}\to \Div_X$ of the symmetric monoidal functor $L\colon \overline{M}\to \Div_X$. In characteristic $0$ it is a Deligne--Mumford stack, and the projection $\radice[n]{X}\to X$ is a coarse moduli space morphism. This construction adds to a point $x\to X$ a stabilizer $\mu_n^r$, where $r$ is the rank of the free abelian group $\overline{M}_x^\gp$.

Quasi-coherent sheaves on the root stack $\radice[n]{X}$ correspond to quasi-coherent parabolic sheaves on $X$ with weights in $\frac{1}{n}\overline{M}^\gp$ \cite[Theorem 6.1]{borne-vistoli}. If $n\mid m$ we have a natural map $\radice[m]{X}\to \radice[n]{X}$, and with these maps root stacks form an inverse system of Deligne--Mumford stacks, with index system the set $\bN$ partially ordered by divisibility. If the log structure of $X$ is locally free, these maps are all flat.

The inverse limit $\irs{X}=:\varprojlim_n \radice[n]{X}$ is called the \emph{infinite root stack} of $X$. It is a stack over schemes$/k$ with an fpqc presentation, but it is not algebraic. Nonetheless, there is a good theory of quasi-coherent sheaves on it, and they correspond to parabolic sheaves with arbitrary rational weights \cite[Theorem 7.3]{TV}. The infinite root stack can be thought of as a purely algebro-geometric incarnation of a log structure itself: the association $X\mapsto \irs{X}$ gives a conservative faithful functor from fine saturated log schemes$/k$ to stacks$/k$ \cite[Section 5]{TV}.

The formation of root stacks (finite or infinite) is compatible with strict base change.

\subsection{Log blow-ups and valuativizations}\label{sec:valuativization}

The \emph{valuativization} of a log scheme first appeared in the unpublished \cite{katov} by K. Kato (see also \cite{illusie}), and it was further studied and used by Olsson \cite{olsson} and Pahnke \cite{pahnke}. Nizio\l \mbox{} \cite{Ni1} uses a related construction of valuative sites for a log scheme, where one allows log blow-ups to be coverings (see Remark \ref{rmk:niziol}).

\begin{definition}
A monoid $P$ is said to be \emph{valuative} if for every $a\in P^\gp$, either $a\in P$ or $-a\in P$ (recall that all monoids here are integral, so $P\to P^\gp$ is injective).

A log locally ringed space (or log algebraic stack) $X$ is said to be {valuative} if all the stalks of the sheaf $\overline{M}$ are valuative monoids.
\end{definition}

The valuativization $X^\val\to X$ of a log locally ringed space $X$ (if it exists) is a valuative log locally ringed space equipped with a morphism to $X$, such that if $Y$ is any valuative log locally ringed space, every log morphism $Y\to X$ factors uniquely as $Y\to X^\val\to X$ (so $(-)^\val$ would be the right adjoint of the inclusion functor from valuative log locally ringed spaces into log locally ringed spaces).

For a quasi-coherent log scheme $X$, the valuativization exists, and can be constructed as the inverse limit $\varprojlim_\cI X_\cI$ of all log blow-ups of $X$, taken in the category of log locally ringed spaces. The result is rarely a scheme, but it still has some geometry that one can study, and relate to the properties of the log scheme $X$. This construction is a logarithmic variant of Riemann--Zariski spaces. 

In the next sections we will review the definition and basic properties of log blow-ups and valuativizations. 

\subsubsection{Log blow-ups}

Let us briefly recall what log blow-ups are, and how they are constructed (more details can be found for example in \cite[Section 4]{niziol}, and references therein). Recall that an \emph{ideal} of a monoid $P$ is a subset $I\subseteq P$ such that $P+I\subseteq I$. Let us consider a subsheaf $\cI\subseteq M$ of ideals of the monoid $M$. We will refer to this as a ``sheaf of ideals of the log structure''.

\begin{definition}[{\cite[Definition 3.1]{niziol}}]
We will say that $\cI$ is \emph{coherent} if locally for the \'etale topology of $X$ there is a chart $P\to M(X)$ for the log structure and a finitely generated ideal $I\subseteq P$ of the monoid $P$, such that $\cI$ coincides with the subsheaf of $M$ generated by the image of $I$.

We will say that $\cI$ is \emph{invertible} if it can be locally generated by a single element, or equivalently if it is induced locally by a principal ideal of a chart $P\to M(X)$.
\end{definition}

If $\cI$ is coherent, then it is invertible if and only if $\cI_x \subseteq M_x$ is a principal ideal for every (geometric) point of $X$.
 
Given a coherent sheaf of ideals $\cI$ of the log structure on $X$, the log blow-up $(X_\cI,M_\cI)$ of $X$ along $\cI$ is a log scheme with a map $f_\cI\colon X_\cI \to X$ with the following universal property: the ideal generated by $f_\cI^{-1}\cI$ in $M_{\cI}$ is invertible, and every morphism of log schemes (or log locally ringed spaces) $f\colon Y\to X$ such that the ideal generated by $f^{-1}\cI$ on $Y$ is invertible factors uniquely as $Y\to X_\cI\to X$. Locally on $X$, where there is a chart $P\to M(X)$ and a finitely generated ideal $I\subseteq P$ inducing $\cI$, we can construct the log blow-up as the pullback along $X\to \Spec k[P]$ of the blow-up $\Proj (\bigoplus_n \langle I\rangle ^n)\to \Spec k[P]$ {along  the ideal $\langle I\rangle \subseteq k[P]$ generated by $I$}.

\begin{remark}
There are two different versions of log blow-ups, one in the category of fine log schemes, and the other one in the category of fine saturated log schemes. The two versions differ by a ``normalization'' step (a ``saturation'' at the level of monoids), which is performed in order to land in the fine saturated category (see \cite[Section 4]{niziol} for details). 

{It does not matter whether we use one or the other version to construct valuativizations, because a valuative monoid is necessarily saturated. In the main body of the paper we will mostly make use of the ``saturated'' variant, since we will be describing log blow-ups via subdivisions of fans (and these are all saturated, as toric varieties described via fans are all normal).}
\end{remark}

There is a filtered partial ordering on coherent sheaves of ideals of $M$ on $X$: we will write $\cJ\geq \cI$ if there exists a coherent sheaf of ideals $\cK\subseteq M$ such that $\cJ=\cK + \cI$ as sheaves of ideals of $M$. Recall that if $I$ and $J$ are ideals of $P$, their sum $I+J\subseteq P$ is the ideal consisting of elements of the form $i+j$ with $i\in I$ and $j\in J$ (in the more usual setting of rings, this corresponds to the product of two ideals). By the universal property, whenever $\cJ\geq \cI$ the map $X_\cJ\to X$ factors uniquely as $X_\cJ\to X_\cI\to X$, so we get an inverse system in the category of log schemes $\{X_\cI\}_\cI$ indexed by coherent sheaves of ideals $\cI$ of the log structure. 
{For later reference, we note that the formation of log blow-ups is compatible with strict base change (where we blow-up the pullback sheaf of ideals).}

\begin{remark}\label{rmk:toric.blow.up}
If $X$ is a (normal) toric variety, that we see as a log scheme via the natural log structure, then we also have ``toric blow-ups'', that are given by subdivisions of the fan $\Sigma$ of $X$. It turns out that every (saturated) log blow-up of $X$ is of this form: by \cite[Proposition 4.3]{niziol}, the log blow-up along a coherent sheaf of ideals $\cI\subseteq M$ is the same as the normalization of the blow-up of $X$ along the coherent sheaf of ideals (in the usual sense, this time) $\langle \alpha(\cI)\rangle\subseteq \cO_X$ generated by $\cI$ in $\cO_X$, via the map $\alpha\colon M\to \cO_X$. If $X=\Spec k[P]$ is affine, the global sections of this image will be a homogeneous ideal of the monoid algebra $k[P]$, and the blow-up along this ideal can be described via a subdivision of the corresponding cone. The subdivisions on the affine pieces will be compatible on the overlaps, and this gives a subdivision of $\Sigma$ realizing the log blow-up.

In particular, toric blow-ups are cofinal among log blow-ups of a normal toric variety.
\end{remark}

\subsubsection{Valuativization of quasi-coherent log schemes} Let us describe how to construct the valuativization $X^\val$ of a quasi-coherent log scheme: consider the inverse limit $\varprojlim_\cI X_\cI$ of all log blow-ups, as a locally ringed space. For a coherent sheaf of ideals $\cI\subseteq M$, denote by $\pi_\cI\colon X^\val\to X_\cI$ the projection. By pulling back the log structures of $X_\cI$, on $X^\val$ we have a filtered direct system of sheaves of monoids, and we set $M^\val:=\varinjlim_\cI \pi_\cI^{-1}M_{\cI}$. This sheaf has a natural induced map to $\cO_{X^\val}=\varinjlim_\cI \pi_{\cI}^{-1}\cO_{X_\cI}$, that gives a log structure on the locally ringed space $X^\val$. The projection $X^\val \to X$ (and all the maps $\pi_\cI$) are naturally log morphisms, and in fact $X^\val$ is the limit of diagram $\{X_\cI\}_{\cI}$ in the category of log locally ringed spaces.

\begin{remark}
Note that, since every fine saturated log scheme $X$ admits a log blow-up $X_\cI$ with Zariski log structure \cite[Theorem 5.4]{niziol}, in the above construction we can assume that the log structure of $X$ itself (and thus of every log blow-up, cfr. \cite[Proposition 4.5]{niziol}) is Zariski.
\end{remark}

The proof that the inverse limit of the log blow-ups is a valuativization is a consequence of the following lemma.

\begin{lemma}[{\cite[Lemma 7.1.2]{olsson}}]\label{lem:valuative}
Let $P$ be an integral monoid. Then $P$ is valuative if and only if every finitely generated ideal $I\subseteq P$ is principal. \qed
\end{lemma}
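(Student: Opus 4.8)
The plan is to deduce both implications directly from the definitions, the crucial point being that $P$ is valuative precisely when the \emph{divisibility preorder} on $P$ — defined by $a\preceq b$ if and only if $b-a\in P^{\gp}$ lies in $P$ — is total. First I would record that $\preceq$ is always a preorder: it is reflexive because $0\in P$, and transitive because $P$ is closed under addition (if $b-a\in P$ and $c-b\in P$ then $c-a=(c-b)+(b-a)\in P$). With this language, valuativity of $P$ is exactly the assertion that for all $a,b\in P$ one has $a\preceq b$ or $b\preceq a$ (apply the definition of valuative to the element $b-a\in P^{\gp}$).

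For the ``only if'' direction, I would take a nonempty finitely generated ideal and write it as $I=\bigcup_{i=1}^{n}(a_i+P)$ with $a_1,\dots,a_n\in P$. A finite nonempty set in a total preorder has a $\preceq$-minimal element, so after relabelling I may assume $a_1\preceq a_i$ for all $i$, i.e.\ $a_i-a_1\in P$. This gives $a_i+P\subseteq a_1+P$ for every $i$, hence $I=a_1+P$ is principal. (The one caveat here is the empty ideal, which is generated by the empty set but is never principal; as is standard I would simply restrict attention to nonempty ideals.)

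For the ``if'' direction, let $a\in P^{\gp}$. Since $P$ is integral, $P^{\gp}$ is its group of differences, so $a=p-q$ for some $p,q\in P$. Consider the ideal $I:=(p+P)\cup(q+P)$ generated by $\{p,q\}$; it is finitely generated, hence by hypothesis principal, say $I=r+P$ with $r\in P$. On one hand $p,q\in I=r+P$, so $p-r\in P$ and $q-r\in P$. On the other hand $r\in I$, so either $r\in p+P$ or $r\in q+P$. In the first case write $r=p+u$ with $u\in P$; then $q-p=(q-r)+(r-p)=(q-r)+u\in P$, i.e.\ $-a\in P$. The second case is symmetric and yields $a\in P$. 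Either way $P$ is valuative.

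I do not anticipate a genuine obstacle: the argument is elementary. The only points that demand care are (i) the degenerate empty-ideal case noted above; (ii) keeping track of integrality/cancellativity when manipulating elements of $P^{\gp}$ as honest differences of elements of $P$; and (iii) being precise that a total \emph{preorder} on a finite nonempty set has a minimal element even when it is not unique — it is only existence that is needed for the first implication.
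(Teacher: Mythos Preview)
Your proof is correct. The paper itself does not give a proof of this lemma: it is stated with a citation to \cite[Lemma~7.1.2]{olsson} and an immediate \qed, so there is nothing to compare against beyond noting that your argument is the standard elementary one. Your handling of the two directions via the divisibility preorder is clean; the only minor quibble is the empty-ideal caveat you already flagged, which is a harmless convention issue and does not affect the use of the lemma in the paper (where it is applied only to nonempty ideals generated by explicit finite sets of elements).
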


\begin{proposition}[{\cite[Theorem 1.3.1]{katov}}]\label{prop:valuativization}
Let $X$ be a quasi-coherent log scheme. Then the inverse limit of all log blow-ups along coherent sheaves of ideals $X^\val=\varprojlim_\cI X_\cI$ as described above is a valuativization for $X$.
\end{proposition}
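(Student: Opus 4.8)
The plan is to verify the universal property of the valuativization directly, using Lemma~\ref{lem:valuative} as the bridge between the monoid-theoretic condition (valuativity) and the geometric operation (log blow-up). First I would check that the inverse limit $X^\val=\varprojlim_\cI X_\cI$ is itself a valuative log locally ringed space. The key point here is that the stalk of $M^\val=\varinjlim_\cI \pi_\cI^{-1}M_{\cI}$ at a point is a filtered colimit of the stalks of the $M_{\cI}$, and a filtered colimit of integral monoids is integral. To see that this colimit is valuative, take any element $a$ of its group-completion; since everything is a filtered colimit, $a$ comes from some $M_{\cI}^\gp$ at a finite stage. The finitely generated ideal of $M_{\cI}$ generated by (a lift of) $a$ and $0$ — or more carefully, the ideal $(a)+0 = \{x : x\ge a \text{ or } x\ge 0\}$ after group-completion — may not be principal in $M_{\cI}$, but after a further log blow-up $X_\cJ\to X_\cI$ (blowing up exactly that coherent sheaf of ideals) it becomes invertible, hence principal on stalks. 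This principality means that either $a$ or $-a$ lies in $M_{\cJ}$ at that point, and therefore in the colimit $M^\val$. So $M^\val$ has valuative stalks.

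Next I would establish the universal property. Let $Y$ be any valuative log locally ringed space and $g\colon Y\to X$ a morphism of log locally ringed spaces. For each coherent sheaf of ideals $\cI\subseteq M$, the pullback $g^{-1}\cI$ generates, on each stalk of $M_Y$, a finitely generated ideal of a valuative monoid, which is principal by Lemma~\ref{lem:valuative}; hence the ideal generated by $g^{-1}\cI$ in $M_Y$ is invertible. By the universal property of the log blow-up $X_\cI$, the morphism $g$ factors uniquely through $X_\cI$. These factorizations $g_\cI\colon Y\to X_\cI$ are compatible with the transition maps $X_\cJ\to X_\cI$ (again by uniqueness in the universal property of $X_\cI$), so they assemble into a morphism $Y\to \varprojlim_\cI X_\cI = X^\val$ of log locally ringed spaces, and this morphism is unique because each $g_\cI$ is. This shows $(-)^\val$ is right adjoint to the inclusion of valuative log locally ringed spaces, which is exactly the defining property.

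The step I expect to be the main obstacle is the careful bookkeeping in the first part: matching up, on the one hand, the abstract monoid-theoretic notion ``the finitely generated ideal generated by $a$ and $0$ in $M_{\cI}^\gp$'' and, on the other hand, the geometric operation of log blow-up along a specific coherent sheaf of ideals, and checking that the blow-up really does make the relevant ideal principal \emph{on stalks} rather than merely locally invertible as a sheaf of ideals. One has to be a little delicate because the ideal one wants to make principal lives in the group-completion, so one should instead work with a genuine ideal of $M_{\cI}$ — e.g. translate $a$ into $M_{\cI}$ by adding a suitable element — blow up that, and then track how principality of the translated ideal controls whether $a\in M^\val$ or $-a\in M^\val$. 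The other minor technical point is verifying that the transition maps in the inverse system $\{X_\cI\}$ are themselves log morphisms in a way compatible with $M^\val$, so that the limit is genuinely a limit in log locally ringed spaces and not merely in locally ringed spaces; this is where compatibility of log blow-ups with the ordering $\cJ\ge\cI$, already recorded in the excerpt, does the work.
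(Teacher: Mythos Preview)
Your proposal is correct and follows the same two-part skeleton as the paper's sketch: first show the limit is valuative, then verify the universal property. The second half (factoring a valuative $Y$ through each $X_\cI$ via Lemma~\ref{lem:valuative} and the universal property of log blow-ups, then assembling into a map to the limit) is exactly what the paper does.

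The one place where you diverge is in the valuativity check. You go back to the \emph{definition} of valuative: pick $a\in (M^\val_p)^\gp$, lift it to a finite stage, and try to blow up an ideal that will force $a$ or $-a$ into the monoid. As you yourself note, this requires some care---$a$ lives in the group completion, so you have to write $a=p-q$ with $p,q$ in the monoid and blow up the honest ideal $(p,q)$; principality of that ideal after blow-up then tells you which of $a$, $-a$ lands in $M^\val$. This works, but the paper instead uses Lemma~\ref{lem:valuative} as a \emph{characterization} rather than just a bridge: it directly verifies that every finitely generated ideal $J\subseteq M^\val_p$ is principal, by lifting the generators of $J$ to some finite stage $(M_\cI)_{\pi_\cI(p)}$ and then finding a coherent sheaf of ideals $\cI'$ on $X$ dominating $\cI$ whose blow-up makes that lifted ideal principal. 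This is slightly cleaner because ``make a finitely generated ideal principal'' is exactly what a log blow-up does, so no translation between $M$ and $M^\gp$ is needed.

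One small point you gloss over (and which the paper also handles only with a phrase): passing from a finitely generated ideal of a single \emph{stalk} to a coherent \emph{sheaf} of ideals that one can actually blow up. This is where quasi-coherence of the log structure is used.
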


\begin{proof}[Sketch of proof]
One shows that $X^\val$ is valuative by checking that for every point $p\in X^\val$ the stalk $M^\val_p=\varinjlim_\cI (M_\cI)_{\pi_\cI(p)}$ is a valuative monoid. In order to do this, by the previous lemma it suffices to check that every  finitely generated ideal $J\subseteq M^\val_p$ is principal.  This follows from the following reasoning. The generators of $J$ come from some log blow-up $X_\cI$ of $X$, denote by $J_\cI\subseteq (M_\cI)_{\pi_\cI(p)}$ the ideal that they generate (so that the ideal generated by the image of $J_\cI$ in $M^\val_p$ will be exactly $J$). We can then find a coherent sheaf of ideals $\cI'$ on $X$ dominating $\cI$, and whose image in $(M_\cI)_{\pi_\cI(p)}$ dominates $J_\cI$. Hence, on the log blow-up $X_{\cI'}$ (that dominates $X_\cI$) we can conclude that the ideal generated by the image of $J_\cI \subseteq (M_\cI)_{\pi_\cI(p)}$ is principal. Therefore, at the limit, $J\subseteq M^\val_p$ will also be principal. 

It is also easy to check that if $Y$ is a valuative log locally ringed space and $f\colon Y\to X$ is a log morphism, then this factors uniquely as $Y\to X^\val\to X$ (again by making use of the previous lemma, and the universal property of log blow-ups).
\end{proof}

\begin{example}\label{example:valuativization}
Since any monoid of rank $0$ or $1$ is valuative, any log scheme where the stalks of $\overline{M}$ have rank $\leq 1$ is valuative. In higher rank the situation is drastically different, as integral finitely generated monoids of rank at least $2$ are never valuative. Because of this, even the valuativization of $\bA^2$ with its natural log structure is quite complicated.
\end{example}

\subsection{{Valuativization of log stacks}} In this paper {it will be important to consider also the} valuativization of some kinds of log stacks. {To this purpose we will presently explain a variant of the process that we recalled above, that applies to stacks as well.} For a log stack $X$, in case log blow-ups make sense (for example if $X$ is algebraic, as we explain below), we will regard the inverse limit $\varprojlim_\cI X_\cI$, in a rather formal way, just as a fibered category with a log structure. 

\begin{definition}\label{def:log.str.fibered}
Let $\cC$ be a fibered category over schemes$/k$. A log structure $(\xi_{(S,\phi)}, \psi_{(a,\beta)})$ on $\cC$ is given by:
\begin{itemize}
\item a log structure $\xi_{(S,\phi)}=(M_{(S,\phi)}, \alpha_{(S,\phi)})$ on a scheme $S$, for every pair $(S,\phi)$ where $\phi$ is an object $\phi\in \cC(S)$ (i.e. a map $\phi\colon S\to \cC$),
\item an isomorphism $\psi_{(a,\beta)}\colon a^*\xi_{(S',\phi')}\to \xi_{(S,\phi)}$ for every $2$-commutative diagram
$$
\xymatrix@=.55cm{
S\ar[dd]_a \ar[rr]^\phi & &  \cC\\
&   \ar@{}[u]!<-6ex,0ex>^(.10){}="a"^(.80){}="b" \ar@{=>}^\beta "a";"b" &\\
S'\ar[rruu]_{\phi'} & &
}
$$

\item such that the obvious compatibility with respect to compositions $(S,\phi)\to (S',\phi')\to (S'',\phi'')$ is satisfied.
\end{itemize}
We will say that a log structure on $\cC$ is \emph{quasi-coherent, integral, finitely generated, fine, saturated, valuative} if the corresponding property is satisfied for every one of the log structures $\xi_{(S,\phi)}$.
\end{definition}

\begin{remark}
{The category of fine log structures on a fibered category $\cC$ can also be described as the category of functors $\cC\to LOG_k$ (commuting with the projections to the category of schemes$/k$) to Olsson's classifying stack of fine log structures over $\Spec k$}. Note that since $LOG_k$ is a category fibered in groupoids, every such functor is automatically cartesian.
\end{remark}

We will call a fibered category with a log structure a \emph{log fibered category}. One can also define morphisms of log structures in the obvious way, as well as morphisms of log fibered categories (as a functor of fibered categories over schemes$/k$, plus compatible morphisms of log structures for every scheme mapping to the source fibered category). If $\cC$ is a scheme or an algebraic stack, this recovers the usual definition of log structure, and the usual properties. In those cases one can restrict the attention to the appropriate small sites (the small \'etale site for a scheme, the lisse-\'etale site for an algebraic stack), where everything becomes more manageable.

\begin{example}\label{rmk:irs.log.str}
Let $X$ be a fine saturated log scheme over $k$. There is a tautological quasi-coherent log structure on the infinite root stack $\irs{X}$: for a scheme $S$, a morphism $S\to \irs{X}$ corresponds by definition to a Deligne--Falings structure  $\Lambda\colon \overline{M}_\bQ \to \Div_S$, that in turn gives a log structure in the sense of Kato $\alpha_\infty\colon M_\infty\to \cO_S$ (cfr. \cite[Theorem 3.6]{borne-vistoli}). This log structure can also be obtained as a direct limit of the pullbacks of the log structures $M_n$ of the finite root stacks $\radice[n]{X}$ along the projections $\irs{X}\to \radice[n]{X}$.
\end{example}

We can define the valuativization of any log fibered category $\cC$, by the universal mapping property with respect to morphisms from valuative log fibered categories.

\begin{definition}
Let $\cC$ be a log fibered category. The valuativization $\cC^\val$ of $\cC$ is a valuative log fibered category with a morphism of log fibered categories $\cC^\val\to \cC$, such that for every valuative log fibered category $\cD$ mapping to $\cC$ there exists a unique factorization $\cD\to \cC^\val\to \cC$.
\end{definition}

{Note that in the definition above, and in the rest of this section, we are using ``unique'' to mean ``unique in the $2$-categorical sense''.} 
Of course  it is not clear why such a thing should exist in full generality.  We will construct it when $\cC$ is a fine log algebraic stack, or the infinite root stack of a fine and saturated log algebraic stack.

\begin{remark}\label{rmk:val.base.change}
It is easy to see that if $\cC^\val\to \cC$ is a valuativization of $\cC$, and $\cD\to \cC$ is a strict morphism of log fibered categories (i.e. for a map $S\to \cD$, the induced log structure on $S$ is isomorphic to the one given by the composite $S\to \cD\to \cC$), then the projection $\cD\times_\cC \cC^\val\to \cD$ is a valuativization for $\cD$.
\end{remark}

\begin{proposition}
{For a log algebraic stack $X$, there exists a valuativization $X^\val\to X$, obtained as limit of log blow ups of $X$.}
\end{proposition}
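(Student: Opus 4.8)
The plan is to build $X^\val$ by descent from a presentation, reducing everything to the case of quasi-coherent log schemes that was settled in Proposition \ref{prop:valuativization}. First I would choose a smooth strict presentation $u\colon U\to X$ by a scheme, so that the log structure of $X$ pulls back to a fine quasi-coherent log structure on $U$ and on $R:=U\times_X U$, and the two projections $p_1,p_2\colon R\to U$ are strict (after possibly iterating the argument once, to pass from stacks to algebraic spaces to schemes, one may assume $U$ and $R$ are schemes). Since the formation of log blow-ups is compatible with strict base change (as recalled after Remark \ref{rmk:toric.blow.up}), a coherent sheaf of ideals $\cI\subseteq M$ on the lisse-\'etale site of $X$ pulls back to coherent sheaves of ideals on $U$ and $R$, and one gets $R_\cI\cong R\times_{U,p_i}U_\cI$ for $i=1,2$; hence $R_\cI\rightrightarrows U_\cI$ is again a groupoid, and I would \emph{define} the log blow-up $X_\cI$ to be the associated quotient stack, with its descended log structure. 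One then checks that $X_\cI$ is a fine log algebraic stack, that the construction is independent of the chosen atlas, and that $X_\cI\to X$ has the expected universal property: it is the terminal object among log fibered categories $\cD\to X$ for which the pullback of $\cI$ is invertible. (This is also the clause in the statement asserting that $X^\val$ is a limit of log blow-ups.)

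Next I would set $X^\val:=\varprojlim_\cI X_\cI$, the limit over the filtered poset of coherent sheaves of ideals of $M$, taken in the $2$-category of fibered categories over schemes$/k$, and equip it with the log structure $M^\val:=\varinjlim_\cI \pi_\cI^{-1}M_\cI$ exactly as in the scheme case. To see that $X^\val$ is valuative it suffices, by Lemma \ref{lem:valuative}, to show that the stalks of $\overline{M}^\val$ at geometric points are valuative. But a log structure on $X$ is the same datum as a log structure on the atlas $U$ together with descent data, and the formation of the limit commutes with the strict pullback along $u$; so the pullback of $X^\val$ to $U$ is canonically the valuativization $U^\val$ of the quasi-coherent log scheme $U$, which is valuative by Proposition \ref{prop:valuativization} (and its stalks of $\overline{M}^\val$ compute those of $X^\val$). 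Hence $X^\val$ is valuative.

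Finally I would check the universal property among log fibered categories. Let $\cD\to X$ be a morphism with $\cD$ valuative. For any coherent sheaf of ideals $\cI$ on $X$, the pullback $\cI|_\cD$ is, locally, generated by the finitely many images of the generators of a finitely generated ideal of a chart; since every finitely generated ideal of a valuative monoid is principal (Lemma \ref{lem:valuative}), $\cI|_\cD$ is invertible. By the universal property of $X_\cI$ from the first step, the map $\cD\to X$ factors uniquely through $X_\cI$; these factorizations are compatible as $\cI$ ranges over the filtered poset, so $\cD\to X$ factors uniquely (in the $2$-categorical sense) through $X^\val=\varprojlim_\cI X_\cI$. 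This is the required universal property, and it exhibits $X^\val$ as a limit of log blow-ups of $X$, as claimed. If one wants the compatibility with strict base change of $X^\val$ itself, it then also follows formally, as in Remark \ref{rmk:val.base.change}.

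I expect the main obstacle to be the first step: setting up log blow-ups of a log \emph{algebraic stack} and isolating the precise universal property they satisfy against the rather weak notion of log fibered category, together with the verification that the groupoid-quotient construction is independent of the atlas and behaves well under the limit. The compatibility of log blow-ups with strict base change is exactly what makes the descent go through, but the $2$-categorical bookkeeping (uniqueness up to unique isomorphism at every stage) has to be handled carefully. Once that is in place, the passage to the limit and the valuativity check are essentially transcriptions of the scheme-theoretic arguments in Proposition \ref{prop:valuativization}.
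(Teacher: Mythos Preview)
Your approach is essentially the same as the paper's: construct $X_\cI$ via the groupoid $R_\cI\rightrightarrows U_\cI$ (using strict base change of log blow-ups to see these maps remain smooth), take the filtered limit $X^\val=\varprojlim_\cI X_\cI$ as a fibered category with the colimit log structure, and then verify valuativity and the universal property as in Proposition~\ref{prop:valuativization}. Your write-up is in fact more detailed than the paper's, which simply says the last step ``is proved in the same way as Proposition~\ref{prop:valuativization}''.

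There is one imprecise step. In your valuativity check you assert that ``the pullback of $X^\val$ to $U$ is canonically the valuativization $U^\val$''. Pulling back the limit gives $U\times_X X^\val \simeq \varprojlim_{\cI\ \text{on }X} U_\cI$, but $U^\val$ is the limit over \emph{all} coherent ideal sheaves on $U$. Identifying the two requires that ideals pulled back from $X$ are cofinal among those on $U$, which you have not argued (and which is not entirely obvious for a general smooth atlas). You also cannot invoke Remark~\ref{rmk:val.base.change} here, since that would be circular. The fix is simply to bypass this reduction and argue directly at stalks exactly as in the proof of Proposition~\ref{prop:valuativization}: a finitely generated ideal in a stalk of $M^\val$ comes from some $X_\cI$, and one finds a coherent sheaf of ideals $\cI'$ on $X$ dominating $\cI$ (using a local chart for the fine log structure) so that the ideal becomes principal on $X_{\cI'}$. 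This keeps the entire argument indexed by ideals on $X$, which is what the paper does implicitly. Your verification of the universal property is fine as written.
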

\begin{proof}
{First, note that if $\cI$ is a coherent sheaf of ideals of the log structure on $X$, then there is a log blow-up $X_\cI\to X$. This can be constructed by taking a groupoid presentation $R\rightrightarrows U$ for $X$, and by defining $X_\cI$ as the stack associated to the groupoid $R_\cI\rightrightarrows U_\cI$ (note that these maps are still smooth, as they are a base change of the  structure maps $s,t\colon R\to U$, because these maps are strict). The stack $X_\cI$ has a natural log structure, and the resulting map $X_\cI\to X$ has the universal property of log blow-ups, as for log schemes.}

Hence we can consider the inverse limit $X^\val=\varprojlim_\cI X_\cI$ as a fibered category (here $\cI$ runs through all coherent ideal sheaves of the log structure on $X$). The log structure on $X^\val$ is described as follows: if $\phi\colon S\to X^\val$ is a morphism, it corresponds by construction to a family $(\phi_\cI,\eta_{\cJ\geq \cI})$ of morphisms $\phi_\cI\colon S\to X_\cI$ for every coherent sheaf of ideals $\cI$, and compatible natural equivalences $\eta_{\cJ\geq \cI}$ between the composites $S\xrightarrow{\phi_\cJ}X_\cJ\to X_\cI$ and $S\xrightarrow{\phi_\cI}X_\cI$ every time $\cJ\geq \cI$. By pulling back the log structure $(M_\cI,\alpha_\cI)$ of $X_\cI$ to $S$, we obtain an inductive system of log structures on $S$. We define $\xi_{(S,\phi)}$ as the limit $\varinjlim_\cI \phi_\cI^{-1}M_{\cI}$, with the induced map to $\cO_S$.
Routine verifications show that there are also natural isomorphisms $\psi_{(a,\beta)}$ as in the definition above, so that we get a log structure on $X^\val$. 
{The fact that this log fibered category is a valuativization is proved in the same way as Proposition \ref{prop:valuativization}.
}
\end{proof}

Let us check that if $X$ is a log scheme, this version of the valuativization is compatible with the locally ringed space of Proposition \ref{prop:valuativization}.

\begin{proposition}
Let $X$ be a quasi-coherent log scheme, and consider the valuativization ${X}^\val$ as a (log) locally ringed space. We can consider the induced fibered category $F_{{X}^\val}$ over schemes, by defining the category $F_{{X}^\val}(S)$ to be the set of morphisms $S\to {X}^\val$ as locally ringed spaces.

If we temporarily denote by $\varprojlim_\cI X_\cI$ the valuativization as a fibered category, then there is a canonical isomorphism $$\varprojlim_\cI X_\cI \simeq F_{{X}^\val}$$ of log fibered categories over schemes$/k$. 
\end{proposition}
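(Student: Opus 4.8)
The plan is to exhibit a map in each direction between the fibered category $\varprojlim_\cI X_\cI$ and $F_{X^\val}$, and to check they are mutually inverse by reducing to the universal properties already established. First I would unwind both sides: by construction, an object of $(\varprojlim_\cI X_\cI)(S)$ is a compatible family of morphisms of log schemes $\phi_\cI\colon S\to X_\cI$ (with coherence data $\eta_{\cJ\geq\cI}$), while an object of $F_{X^\val}(S)$ is a single morphism $S\to X^\val$ of locally ringed spaces, which we then need to upgrade to a morphism of log locally ringed spaces using the log structure $M^\val$ described in Section \ref{sec:valuativization}.

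The forward map $\varprojlim_\cI X_\cI \to F_{X^\val}$ is the tautological one: given a compatible family $\{\phi_\cI\}$, composing with the projections $\pi_\cI\colon X^\val\to X_\cI$ of the locally ringed space $X^\val$ is not quite what we want — rather, since $X^\val = \varprojlim_\cI X_\cI$ is literally the inverse limit in the category of (log) locally ringed spaces by Proposition \ref{prop:valuativization}, the family $\{\phi_\cI\}$ induces a unique morphism $S\to X^\val$ of log locally ringed spaces, and in particular of locally ringed spaces, giving an object of $F_{X^\val}(S)$. The key point to verify here is that the log structure $\xi_{(S,\phi)} = \varinjlim_\cI \phi_\cI^{-1}M_\cI$ on $S$ coming from the fibered-category description agrees with the pullback of $M^\val = \varinjlim_\cI \pi_\cI^{-1}M_\cI$ along $S\to X^\val$; this is immediate since pullback of log structures commutes with the relevant filtered colimits and with composition. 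Conversely, given a morphism $f\colon S\to X^\val$ of locally ringed spaces, I would compose with each $\pi_\cI$ to get morphisms $S\to X_\cI$ of locally ringed spaces, observe that each $X_\cI$ is a scheme so these are automatically morphisms of schemes, equip them with the log structures pulled back via $f$ (using that $f^{-1}M^\val$ maps to $\cO_S$), and check the coherence data; this produces an object of $(\varprojlim_\cI X_\cI)(S)$. Functoriality in $S$ and compatibility with the $2$-categorical structure are routine.

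The main obstacle — really the only substantive one — is the compatibility of the \emph{log structures}, i.e.\ checking that a morphism of the underlying locally ringed spaces $S\to X^\val$ carries a \emph{canonical} log structure making it a morphism of log locally ringed spaces, and that this matches the colimit log structure $\xi_{(S,\phi)}$ on the fibered-category side. Here one must be slightly careful because a priori a map of locally ringed spaces does not see the log structure; the resolution is that $M^\val$ is a genuine sheaf of monoids on $X^\val$ with a map $\alpha^\val\colon M^\val\to \cO_{X^\val}$, so its pullback $f^{-1}M^\val\to \cO_S$ is a well-defined (pre-)log structure on $S$, and one checks using \cite[Proposition 4.5]{niziol} (each $M_\cI$ may be taken Zariski) and the universal property of the inverse limit that this recovers $\varinjlim_\cI \phi_\cI^{-1}M_\cI$. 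Once this identification is in place, the two constructions are visibly inverse to one another — both sides are characterized by the same universal property (mapping in compatible families $\{\phi_\cI\}$) — and the canonical isomorphism $\varprojlim_\cI X_\cI \simeq F_{X^\val}$ of log fibered categories follows. I would also remark that essentially the same argument, run at the level of groupoid presentations, gives the analogous statement for log algebraic stacks, though for the present proposition only the scheme case is needed.
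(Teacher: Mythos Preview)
Your proposal is correct and follows essentially the same approach as the paper, just with considerably more detail. The paper's proof is a one-liner: it observes that $\Hom_{\LRS}(S,X^\val)=\Hom_{\LRS}(S,\varprojlim_\cI X_\cI)=\varprojlim_{\cI} \Hom_{\Sch}(S,X_\cI)$ (the first equality by definition, the second by the universal property of the limit in locally ringed spaces together with the fact that each $X_\cI$ is a scheme), and then remarks that the log structures agree because they are defined identically on both sides. Your careful unpacking of the log-structure compatibility is more than the paper supplies, but the underlying idea is the same; the reference to \cite[Proposition 4.5]{niziol} about Zariski log structures is not actually needed here.
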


\begin{proof}
This is clear from $\Hom_{\LRS}(S,X^\val)=\Hom_{\LRS}(S,\varprojlim_\cI X_\cI)=\varprojlim_{\cI} \Hom_{\Sch}(S,X_\cI)$, where $\LRS$ denotes the category of locally ringed spaces. It is also clear that the log structures will coincide, since they are defined in the exact same way for the inverse limit of fibered categories and for the locally ringed space.
\end{proof}

\begin{remark}
One can ask if anything is lost by considering the valuativization formally as an inverse limit of functors, rather than as a locally ringed space as in \cite{katov} and Proposition \ref{prop:valuativization} above. More generally, one can associate a functor over schemes to an arbitrary locally ringed space, in the same way. Despite the fact that it is unlikely that this process will be full and/or faithful in general, it is still possible that for a quasi-coherent log scheme $X$, the locally ringed space $X^\val$ could be reconstructed from the functor that it represents on schemes.

We will make no actual use of the geometry of the locally ringed space $X^\val$ in this paper, so we leave the question open to further investigation.
\end{remark}

\subsubsection{Valuativization of an infinite root stack} As mentioned earlier, we will also consider valuativizations of infinite root stacks of fine saturated log algebraic stacks. These infinite root stacks are not algebraic, so a little more care is required in this case. What allows everything to work out fine is the fact that they are ``pro-algebraic''.

Let $X$ be a fine saturated quasi-compact log scheme (or log algebraic stack) over $k$, and consider the infinite root stack $\irs{X}$. This has a tautological log structure, as explained in Example \ref{rmk:irs.log.str}. Recall that the infinite root stack is an inverse limit $\irs{X}=\varprojlim_n \radice[n]{X}$ of the log algebraic stacks $\radice[n]{X}$.

{
There are at least three ways of constructing the valuativization of $\irs{X}$: we could
\begin{itemize}
\item construct log blow-ups along coherent sheaves of ideals of $\irs{X}$ and then pass to the limit with respect to the ideals, or
\item consider the valuativizations $\radice[n]{X}^\val$ (this makes sense, since $\radice[n]{X}$ is a log algebraic stack), and then pass to the limit with respect to the index $n$, or
\item mix the two approaches, and consider the inverse system of log blow-ups $\radice[n]{X}_\cI$ of the finite root stacks along coherent sheaves of ideals of their log structure.
\end{itemize}
All three constructions can be carried through, and will yield the same log fibered category for $\irs{X}^\val$.} {In the following 
we will use the third approach, since it is best suited for our purposes.}

Consider for every $n\in \bN$ the set $I_n$ of coherent ideal sheaves $\cI_n^\alpha$ of the log structure on $\radice[n]{X}$, and form the set $I=\{(n,\cI_n^\alpha)\}_{n\in \bN, \cI_n^\alpha\in I_n}$. This set has a filtered partial ordering, by declaring that $(m,\cI_m^\beta)\geq (n,\cI_n^\alpha)$ if $n\mid m$ and, if $(\cI_n^\alpha)^m$ denotes the sheaf of ideals on $\radice[m]{X}$ generated by the pullback of $\cI_n^\alpha$, then we have $\cI_m^\beta \geq (\cI_n^\alpha)^m$ as sheaves of ideals on $\radice[m]{X}$.

For every pair of elements of $I$ such that $(m,\cI_m^\beta)\geq (n,\cI_n^\alpha)$, by the universal property of log blow-ups we have a map $\radice[m]{X}_{\cI_m^\beta}\to \radice[n]{X}_{\cI_n^\alpha}$, and we can consider the inverse limit $\varprojlim_{(n,\cI_n^\alpha) \in I}\radice[n]{X}_{\cI_n^\alpha}$.

\begin{proposition}\label{prop:diagonal.valuativization}
The inverse limit $\varprojlim_{(n,\cI_n^\alpha) \in I}\radice[n]{X}_{\cI_n^\alpha}$ is a valuativization for $\irs{X}$.
\end{proposition}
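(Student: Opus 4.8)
The plan is to verify the universal property of the valuativization directly for the inverse limit $\varprojlim_{(n,\cI_n^\alpha)\in I}\radice[n]{X}_{\cI_n^\alpha}$, exploiting the fact that both root stacks and log blow-ups have nice universal properties, and that taking inverse limits of the indexing diagrams commutes appropriately. First I would observe that the log fibered category $Z:=\varprojlim_{(n,\cI_n^\alpha)\in I}\radice[n]{X}_{\cI_n^\alpha}$ carries a natural log structure, defined stratum-wise exactly as in the previous proposition (pull back each $(M_{\cI_n^\alpha},\alpha_{\cI_n^\alpha})$ to a test scheme $S$ and take the filtered colimit over $I$), together with a canonical map $Z\to \irs{X}$: indeed for each fixed $n$ the subsystem $\{(n,\cI_n^\alpha)\}_{\alpha}$ has limit $\radice[n]{X}^\val$, and the compatible projections $\radice[n]{X}^\val\to \radice[n]{X}$ assemble, after passing to the limit over $n$, to a map $Z\to \varprojlim_n \radice[n]{X}=\irs{X}$.

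Next I would check that $Z$ is valuative. A morphism $S\to Z$ is, by construction, a compatible family of maps $\phi_{n,\alpha}\colon S\to \radice[n]{X}_{\cI_n^\alpha}$, and the stalk of $\overline{M}_Z$ at a point $p$ is the filtered colimit $\varinjlim_{(n,\cI_n^\alpha)} (\overline{M}_{\radice[n]{X}_{\cI_n^\alpha}})_{\phi_{n,\alpha}(p)}$. Using Lemma \ref{lem:valuative} it suffices to show every finitely generated ideal of this colimit monoid is principal; a finite generating set already lives at some stage $\radice[n]{X}_{\cI_n^\alpha}$, and as in the proof of Proposition \ref{prop:valuativization} one finds a coherent sheaf of ideals $\cI'$ on $\radice[n]{X}$ dominating $\cI_n^\alpha$ and dominating this ideal, so that after passing to $\radice[n]{X}_{\cI'}$ (hence further along in $I$) the ideal becomes principal. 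The only new wrinkle compared to the scheme case is the presence of the root-stack direction, but the monoids $\overline{M}_{\radice[n]{X}}$ are just $\frac{1}{n}\overline{M}_X$, so finitely generated ideals and their domination behave exactly as before; this step is essentially bookkeeping.

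The substantive step is the universal property: given a valuative log fibered category $\cD$ with a map $g\colon \cD\to \irs{X}$, I must produce a unique factorization $\cD\to Z\to \irs{X}$. The composite $\cD\to\irs{X}\to\radice[n]{X}$ gives, for each $n$, a log morphism to $\radice[n]{X}$; since the $\overline{M}$-stalks of $\cD$ are valuative, every finitely generated ideal pulled back from $\radice[n]{X}$ becomes principal on $\cD$, so by the universal property of log blow-ups each $\cD\to\radice[n]{X}$ factors uniquely through every log blow-up $\radice[n]{X}_{\cI_n^\alpha}$. These factorizations are compatible as $(n,\cI_n^\alpha)$ varies (by uniqueness in the universal properties, and because the transition maps in $I$ are the canonical ones), hence assemble into a map $\cD\to\varprojlim_{(n,\cI_n^\alpha)}\radice[n]{X}_{\cI_n^\alpha}=Z$ over $\irs{X}$, unique in the $2$-categorical sense. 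The main obstacle — and the point requiring genuine care rather than routine verification — is checking the $2$-categorical compatibilities: that the uniquely determined $2$-isomorphisms witnessing each factorization are themselves compatible along the transition maps of the diagram $I$, so that they glue to a single well-defined object of $Z(\cD)$ (rather than merely a compatible family up to unspecified coherence). This is handled by invoking uniqueness in the universal property of log blow-ups at each stage, together with the fact that $I$ is a filtered poset (so there are no higher coherence conditions beyond pairwise ones), exactly as in the construction of the log structure in Definition \ref{def:log.str.fibered}; I would also remark that the equivalence of this construction with the other two listed approaches follows from cofinality of toric-type subdivisions (Remark \ref{rmk:toric.blow.up}) and the commutation of the two inverse limits.
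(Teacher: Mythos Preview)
Your proposal is correct and follows essentially the same approach as the paper: verify valuativity of the limit's log structure via Lemma~\ref{lem:valuative} by pushing a finitely generated ideal back to a finite stage and then further blowing up, and then check the universal property using the universal properties of log blow-ups. The paper is terser, simply noting that the argument is ``along the lines of the proof of Proposition~\ref{prop:valuativization}'' and that the factorization from a valuative $\cC$ is ``easy to check,'' whereas you spell out the $2$-categorical compatibilities more carefully; your extra care there is warranted but not a different idea. One small quibble: your closing remark about equivalence with the other two constructions via Remark~\ref{rmk:toric.blow.up} is not part of what the proposition asserts, and that remark concerns toric varieties specifically, so it is not the right citation for a general $X$; this does not affect the proof of the proposition itself.
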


\begin{proof}
This proof is also along the lines of the proof of Proposition \ref{prop:valuativization}.

Let us show that the log structure of the inverse limit is valuative: given a scheme $S$ with a map $S\to \varprojlim_{(n,\cI_n^\alpha) \in I}\radice[n]{X}_{\cI_n^\alpha}$ and a geometric point $s\to S$, we want to show that the monoid $M_{S,s}=\varinjlim_{(n,\cI_n^\alpha)} ((M_n)_{\cI_n^\alpha})_{s_{n,\alpha}}$ is valuative (where we denote by $s_{n,\alpha}$ the image of $s$ in $\radice[n]{X}_{\cI_n^\alpha}$). 

%Let us first note that by localizing on $X$ around the image of $s$, we can assume that the log structure has a global chart $X\to \Spec k[P]$ with $P$ fine, saturated and sharp, sending $s$ to the homogeneous maximal ideal in $k[P]$. Moreover, since every object is pulled back from $\Spec k[P]$, we can assume that $X=\Spec k[P]$ altogether.

Let us consider a finitely generated ideal $J\subseteq M_{S,s}$. Its finitely many generators all come from $((M_n)_{\cI_n^\alpha})_{s_{n,\alpha}}$ for some $(n,\cI_n^\alpha) \in I$, let us denote by $J_{(n,\cI_n^\alpha)}$ the ideal they generate in the monoid $((M_n)_{\cI_n^\alpha})_{s_{n,\alpha}}$. Then we can find a sheaf of ideals $\cI_n^\beta$ of the log structure of $\radice[n]{X}$ dominating $\cI_n^\alpha$, and whose image in $((M_n)_{\cI_n^\alpha})_{s_{n,\alpha}}$ dominates the ideal $J_{(n,\cI_n^\alpha)}$. Consider the log blow-up $\radice[n]{X}_{\cI_n^\beta}$. By construction, the image of $J_{(n,\cI_n^\alpha)}$ in the monoid $((M_n)_{\cI_n^\beta})_{s_{n,\beta}}$ is a principal ideal.
%With our assumptions, there will be a corresponding coherent sheaf of ideals $\cJ$ of the log structure of the stack $\radice[n]{X}_{\cI_n^\alpha}$, and we can consider its log blow-up $(\radice[n]{X}_{\cI_n^\alpha})_\cJ$.
Since the map from the inverse limit factors through the projection $\radice[n]{X}_{\cI_n^\beta}\to \radice[n]{X}_{\cI_n^\alpha}$, we can conclude that $J$ is also principal. By Lemma \ref{lem:valuative}, it follows that the log structure of $S$ is valuative.

It is also easy to check that if $\cC$ is a valuative log fibered category mapping to $\irs{X}$, then we have a unique factorization $\cC\to \irs{X}^\val\to \irs{X}$, by using Lemma \ref{lem:valuative}.
\end{proof}

\begin{remark}\label{rmk:cofinal.subsystem}
Let us also explicitly note that for every cofinal subsystem $J\subseteq I$ we will also have $\varprojlim_{(n,\cI_n^\alpha) \in J}\radice[n]{X}_{\cI_n^\alpha}\simeq \irs{X}^\val$. 
\end{remark}

\begin{remark}\label{rmk:niziol}
In \cite[Section 2.2]{Ni1}, Nizio{\l} considers valuative variants of the Kummer-flat and Kummer-\'etale sites (among others), by allowing as covers also log blow-ups. This valuativization procedure should be a ``geometric counterpart'' to these valuative sites. 

More precisely, the valuativization $X^\val$ of $X$ itself corresponds, loosely speaking, to the valuative Zariski site (denoted by $X_\val$ in \cite{Ni1}), and the valuativization of the infinite root stack $\irs{X}^\val$ should correspond to the valuative Kummer-\'etale site (which coincides with the ``full'' log \'etale site). In fact, one can define a small \'etale site for the infinite root stack, and the associated topos turns out to be equivalent to the Kummer-\'etale topos (see \cite[Section 6.2]{TV}). Introducing log blow-ups into the picture should give the full \'etale site instead.

Thus, the valuativization of the infinite root stack $\irs{X}^\val$ can be thought of as a geometric incarnation of the full log \'etale site of a fine saturated log scheme $X$.
\end{remark}

\subsection{Toric stacks}\label{sec:toric.stacks}

We recall the basic definitions of stacky fans and toric stacks from \cite{gsa}. Recall that if $N$ is a lattice and $\Sigma$ is a rational polyhedral fan in the vector space $N_\bR=N\otimes_\bZ \bR$, there is an associated normal toric variety $X_\Sigma$.

\begin{definition}
A \emph{stacky fan} is the datum of a rational polyhedral fan $\Sigma$ in a vector space $N_\bR$ for a lattice $N$, and a homomorphism of lattices $f\colon N\to N'$, with finite cokernel.
\end{definition}

To a stacky fan one can associate a toric stack as follows: the homomorphism of lattices $f$ gives a homomorphism of tori $D(f^\vee)\colon D(N^\vee)\to D((N')^\vee)$, where we are denoting by $N^\vee$ the lattice $\Hom(N,\bZ)$, and by $D(-)$ the Cartier dual $\Hom(-,\bG_m)$. %Because $f$ has finite cokernel, the map $f^\vee$ is injective, and $D(f^\vee)$ is surjective.
Set $G:=\ker D(f^\vee)$.%, and denote by $X_\Sigma$ the toric variety given by the fan $\Sigma$.
Then, since $G$ is a subgroup of the torus $D(N^\vee)$ of the toric variety $X_\Sigma$, there is a natural action of $G$ on $X_\Sigma$.

\begin{definition}
The \emph{toric stack} associated with the stacky fan $(\Sigma, f\colon N\to N')$ is the quotient stack $[X_\Sigma/G]$.
\end{definition}

Toric stacks and stacky fans will be used in the main text to describe root stacks and log blow-ups of some local models for our constructions. We end this subsection with a few examples that can be useful to keep in mind.

\begin{example}
Fix a positive integer $n$. Consider the action of $\mu_n$ on $\bA^2$ given by $\xi\cdot (x,y)=(\xi x,\xi^{-1}y)$. The quotient stack $\cX=[\bA^2/\mu_n]$ is a smooth Deligne--Mumford stack, and its coarse moduli space is the $A_{n-1}$ singularity $X=\bA^2/\mu_n=\Spec k[x,y,t_n]/(xy-t_n^n)$.

The quotient stack $\cX$ is a toric stack: the lattice $N$ here is the lattice $\bZ^2$, the fan $\Sigma$ is the first quadrant in $\bR^2$, and $N'$ is the lattice given by points of the form $\frac{1}{n}(a,b)$ in $\bR^2$, with $a+b\equiv 0\; (\mathrm{mod} \; n)$.
The induced morphism of tori $\bG_m^2\cong D(N^\vee)\to D((N')^\vee)\cong \bG_m^2$ can be written (by appropriately choosing the isomorphisms) as $(s,t)\mapsto (st^{-(n-1)},t^n)$, and its kernel can be identified with $\mu_n\subseteq \bG_m$ embedded anti-diagonally $\xi\mapsto (\xi,\xi^{-1})$. The toric variety $X_\Sigma$ is $\bA^2$, and the result of the procedure outlined above is precisely the quotient stack $[\bA^2/\mu_n]$.

Note also that the coarse moduli space $\bA^2/\mu_n$ is the toric variety given by the same fan $\Sigma$, but with respect to the bigger lattice $N'$.
\end{example}

\begin{example}\label{ex:toric.root}
Consider $\bA^2$ as a toric variety, with its natural log structure. Its fan $\Sigma$ is the first quadrant in the vector space $\bR^2$ associated with the lattice $\bZ^2$. All its root stacks $\radice[n]{\bA^2}$ are toric stacks (and this holds in broader generality for toric varieties). In order to write a stacky fan for $\radice[n]{\bA^2}$ it suffices to consider the same fan $\Sigma$ of $\bA^2$, but as a fan in the lattice $n\bZ^2$, together with the inclusion of lattices $N=n\bZ^2\subseteq \bZ^2=N'$. The corresponding toric variety $X_\Sigma$ is again $\bA^2$ (mapping to the original $\bA^2$ via $(x,y)\mapsto (x^n,y^n)$), and it has an action of the kernel $\mu_n^2$ of the induced map of tori $\bG_m^2\to \bG_m^2$ given by $(s,t)\mapsto (s^n,t^n)$. The resulting quotient is $[\bA^2/\mu_n^2]$, which is indeed naturally identified with $\radice[n]{\bA^2}$. Taking its coarse moduli space, i.e. by considering $\Sigma$ as a fan in the bigger lattice $N'$, we get back the original $\bA^2$.

Also, note that subdivisions of $\Sigma$ give both toric blow-ups of the original $\bA^2$, and log blow-ups of the root stack $\radice[n]{X}$. Indeed, a subdivision of $\Sigma$ as a fan with lattice $N$ gives a toric blow-up $X\to X_\Sigma=\bA^2$, and the action of $\mu_n^2$ lifts to $X$. The quotient stack $[X/\mu_n^2]\to[\bA^2/\mu_n^2]$ gives a log blow-up of the stack $\radice[n]{\bA^2}$.
\end{example}

\subsection{{Dg-categories and categories of sheaves}} 
\label{sec:dg.categories}

Throughout the paper a \emph{dg-category} will be a $k$-linear differential graded category. We refer the reader   to \cite{keller2007differential} and \cite{drinfeld2004dg} for additional information on 
dg-categories and   their basic properties. All limits and colimits of diagrams of dg-categories are to be understood as homotopy limits  or colimits in the category of dg-categories, equipped with its Morita model structure of \cite{Tab} and \cite{toen2007homotopy}. Most dg-categories appearing in this paper will be \emph{triangulated}, see \cite[Section 4.4]{bertrand2011lectures} for a discussion of this notion. 
Let $\cC$ be a dg-category and let $A$ and $B$ be objects of 
$\cC$. We denote $\Hom_\cC(A, B)$ the complex of morphisms between $A$ and $B.$
We will also make use of the concept of a 
t-structure of a triangulated dg-category. By definition, a triangulated dg-category $\cC$ has a t-structure if its homotopy category $Ho(\cC)$ has a t-structure in the ordinary sense. A reference 
for the theory of t-structures of stable $(\infty,1)$-categories is  \cite[Section 1.2.1]{Lu2}. The discussion there applies without variations to the case of triangulated dg-categories. Given an abelian category $A$, we denote the canonical dg enhancement of its bounded derived category by $D^b_{dg}(A)$.

In the richer homotopical context of dg-categories,  quasi-coherent sheaves can be defined in complete generality for arbitrary (higher and derived) stacks, and they enjoy good descent properties. A discussion of these aspects can be found in the introduction of \cite{BFN}. 
If $X$ is a stack, we  denote by  $\Qcoh(X)$ the 
triangulated dg-category of quasi-coherent sheaves on $X$, and by 
$\Db{X}$ its full subcategory of perfect 
complexes. The tensor product of quasi-coherent sheaves equips  $\Qcoh(X)$ with a symmetric monoidal structure, and $\Db{X}$ can be defined as the full subcategory of dualizable objects. Passing to the homotopy categories 
we recover the ordinary unbounded derived category of quasi-coherent sheaves, and the triangulated category of perfect complexes on $X$.  

\begin{remark}
Throughout the paper we will work with the category of perfect complexes over our scheme or stacks, rather than with the bounded derived category. In the presence of singularities, $\Perf(-)$ is in some respects better behaved than $D^b(-)$. We remark that the two notions coincide 
when working with smooth geometric objects, under reasonable assumptions. This is the case for instance for a locally Noetherian regular scheme, 
see Illusie's Expos\'  e I in SGA6 \cite{sga6} or, for a more recent reference (and a stronger result),  Corollary 3.0.5 of \cite{BNP}. 
\end{remark}

By \cite[Theorem 1.3.4]{G},  the dg-category of quasi-coherent sheaves $\Qcoh(-)$ satisfies faithfully flat descent. Namely, let $Y \to X$ be a  faithfully flat cover, and let $Y^\bullet$ be the simplicial object given by the \v{C}hech nerve of $Y \to X$. Then we have $$ \varprojlim \Qcoh(Y^\bullet) \simeq \Qcoh(X),$$ where the (homotopy) limit on the left is computed in the category of symmetric monoidal dg-categories. It is easy to see that dualizable objects in the limit are the same as the limit of dualizable objects. Thus, passing to dualizable objects on both sides, we obtain  $$ \varprojlim \Db{Y^{\bullet}} \simeq \Db{X}.$$  As a consequence $\Db{-}$ also satisfies faithfully flat descent. This will be an important technical ingredient in our arguments.

\subsubsection*{{Quasi-coherent sheaves on pro-stacks}} 
We will work with pro-objects in stacks. We make the following formal definition of the category of perfect complexes on a pro-stack.  
\begin{definition}
\label{properf}
Let $\{X_i\}_{i\in I}$ be a pro-object in   stacks. We will formally define $$\Db{\varprojlim_i X_i}:=\varinjlim_i \Db{X_i}$$ as a dg-category.
\end{definition}

We do not claim that Definition \ref{properf} is a reasonable  definition of the category of perfect complexes over an arbitrary pro-object in stacks. However, in the concrete case of the kind of pro-stacks that we will be interested in, this definition can be justified. We devote the rest of this section to a discussion of this aspect.  \begin{remark}\label{limit of affine}
If the transition maps $X_j\to X_i$ are affine, so that the inverse limit is also naturally an algebraic stack $X$, and if additionally every $X_i$ is coherent, and affine over a quasi-compact scheme or stack $X_0$, 
then we have an equivalence of abelian categories
$\Coh(\varprojlim_i X_i)\simeq \varinjlim\Coh(X_i)$ by the results of \cite[IV, \S 8]{EGA}, and an equivalence of categories $\Db{\varprojlim_i X_i}\simeq \varinjlim_i \Db{X_i}$ (see \cite[Tag 09RC]{stacks-project}). We remark however that this result does not apply to our situation, since the structure maps in the inverse systems that we will consider are not affine. 
\end{remark}

We will work with two kinds of inverse systems,  given by infinite root stacks and valuativizations,  which were introduced in the previous section. Infinite root stacks are inverse limits of the inverse system $\{\radice[n]{X}\}_{n\in \bN}$ of finite root stacks of the log scheme or stack $X.$ 
These stacks have groupoid presentations $R_n\rightrightarrows U_n\to \radice[n]{X}$ that fit very nicely together (the projections lift to affine morphisms $R_m\to R_n$ and $U_m\to U_n$). Because of this we have a concrete description of quasi-coherent and coherent sheaves on $\radice[\infty]{X}$ (see \cite[Section 4]{TV}), and 
thus also of  the category of perfect complexes $\Db{\radice[\infty]{X}}$. We will check below (Proposition \ref{prop.db.irs}) that, under some assumptions, 
this gives the same dg-category as the direct limit of the categories of perfect complexes $\Db{\radice[n]{X}}$ of the finite root stacks. Thus, in the case of the infinite root stack,  
Definition \ref{properf} gives a reasonable notion, since it captures perfect complexes on the actual limit of 
the inverse system. 

The structure maps of the second inverse system, which computes the valuativization, are blow-ups.  Thus they are very far from being affine as required by Remark \ref{limit of affine}. In this case it is maybe less clear that our definition for the category of perfect complexes is reasonable. We do not explore this question any further, but we point out the following related result. If $X$ is a log regular quasi-compact log scheme $X$, then Nizio{\l} \cite[Corollary 3.9]{Ni1} proves that the direct limit of the category of locally free sheaves on the Kummer-flat site of log blow-ups of $X$ is equivalent to the category of locally free sheaves on the valuative Kummer-flat site of $X$. Thus, at least under these 
assumptions and in the ``Kummer-flat'' context, vector bundles on the valuativization are modeled by the colimit of the categories of vector bundles on log blow-ups. 
 
To conclude this section we will check that, as anticipated, under reasonable assumptions  we have an equivalence  
$$\Db{\radice[\infty]{X}}\simeq \varinjlim_n \Db{\radice[n]{X}}.$$
We start by proving a general result on colimits of pretriangulated dg-categories equipped with a t-structure.
\begin{theorem}\label{colim}
Let $I$ be a filtered category, and let $\{\cC_i\}_{i \in I}$ be a filtered system of dg-categories. Assume that all the structure maps $$\alpha_{i \to j}:\cC_i \to \cC_j$$ are fully faithful. 
Then the colimit $\cC := \varinjlim \cC_i$ is the following  dg-category 
\begin{itemize}
\item  The objects of $\cC$ are given by 
the union $\bigcup_{i\in I} \mathrm{Ob}(\cC_i)$.
\item The Hom complex between  $A_i \in \cC_i$ and  $A_j\in \cC_j$ is given  by 
$$\Hom_\cC(A_i, A_j) = \Hom_{\cC_l} (\alpha_{l \to i}(A_i),\alpha_{l \to j}(A_j))$$ where $l$ is any object of $I$ that is the source of morphisms $ j \leftarrow l \rightarrow i$. 
\end{itemize}
\end{theorem}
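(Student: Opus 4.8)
The plan is to construct the claimed dg-category explicitly, call it $\cC'$, with objects $\bigcup_{i\in I}\Ob(\cC_i)$ and Hom-complexes defined via a representative $l$ as in the statement, and then show that $\cC'$ is a homotopy colimit of the diagram $\{\cC_i\}$ in the Morita model structure. First I would check that $\cC'$ is well-defined as a dg-category: given $A_i\in\cC_i$ and $A_j\in\cC_j$, since $I$ is filtered there exists $l$ with morphisms $j\leftarrow l\rightarrow i$, and I must verify that $\Hom_{\cC_l}(\alpha_{l\to i}(A_i),\alpha_{l\to j}(A_j))$ is independent (up to canonical isomorphism of complexes) of the choice of $l$. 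This is where full faithfulness of the structure maps is essential: for any two choices $l, l'$, filteredness gives a common $m$ receiving morphisms from both, and the maps $\alpha_{l\to m}$, $\alpha_{l'\to m}$ induce isomorphisms on the relevant Hom-complexes, which one checks are compatible using the cocycle/coherence conditions of the filtered system. Composition in $\cC'$ is then defined by passing to a common index dominating all objects involved, and associativity follows similarly. One also checks the evident functors $\cC_i\to\cC'$ are dg-functors and are compatible with the transition maps, so $\cC'$ receives a cocone from the diagram.

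Next I would verify the universal property, i.e. that $\cC'$ together with these functors is the homotopy colimit. The cleanest route is to use the description of filtered homotopy colimits of dg-categories: in the Morita model structure, a filtered homotopy colimit can be computed by a suitable strictification of the diagram followed by an ordinary (strict) filtered colimit of dg-categories, and strict filtered colimits of dg-categories are computed objectwise on objects and on Hom-complexes (filtered colimits of complexes are exact and commute with everything relevant). When all transition functors are fully faithful, the strict filtered colimit already has the form described: its objects are $\bigcup_i\Ob(\cC_i)$ and $\Hom(A_i,A_j)=\varinjlim_{l}\Hom_{\cC_l}(\alpha_{l\to i}A_i,\alpha_{l\to j}A_j)$, and full faithfulness makes this colimit system eventually constant, so it equals $\Hom_{\cC_l}(\alpha_{l\to i}A_i,\alpha_{l\to j}A_j)$ for any sufficiently large $l$. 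One must also confirm that this strict colimit is already fibrant–cofibrant enough (or that no further Morita localization changes it up to equivalence) — but the point is precisely that a strict filtered colimit of the underlying dg-categories models the homotopy colimit when the maps are fully faithful, because fibrancy/cofibrancy obstructions are absorbed by the filtered system; alternatively one invokes that filtered colimits in $\dgCat$ with the Morita structure are homotopy colimits along cofibrations, and fully faithful dg-functors can be replaced functorially by cofibrations without disturbing the colimit.

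I would then record the mapping-space computation as the substance of the proof: for each pair $(A_i,A_j)$, the colimit $\varinjlim_l\Hom_{\cC_l}(\alpha_{l\to i}A_i,\alpha_{l\to j}A_j)$ over the filtered category of $l$'s mapping to both $i$ and $j$ stabilizes, because each transition map in this colimit is an isomorphism of complexes (full faithfulness of $\alpha_{l\to l'}$), so the colimit equals any term. This gives exactly the formula in the statement, and simultaneously shows composition and identities are inherited. Finally I would observe that this $\cC'$ satisfies the correct universal mapping property into any target dg-category (a dg-functor out of $\cC'$ is the same as a compatible family of dg-functors out of the $\cC_i$), confirming it is the colimit.

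The main obstacle I anticipate is the bookkeeping around the Morita model structure: one needs to be careful that the naive strict colimit genuinely computes the \emph{homotopy} colimit, rather than merely some colimit. The fully faithful hypothesis is what saves the day — it lets one replace the diagram by one with cofibrant structure maps (or argue directly that the strict colimit is already a model), so the only real work is the coherence/independence-of-$l$ verification for Hom-complexes and composition, which is routine filtered-category diagram chasing but must be done carefully to get a genuine dg-category rather than merely a dg-category up to coherent homotopy.
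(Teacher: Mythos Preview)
Your approach is correct in outline but takes a genuinely different route from the paper. The paper's proof is very short and entirely by citation: it invokes Cohn's result that the underlying $\infty$-category of dg-categories with the Morita model structure is the $\infty$-category of $k$-linear stable $\infty$-categories, observes that since stabilization is a left adjoint one may compute filtered colimits of stable $k$-linear $\infty$-categories in plain $\infty$-categories, and then appeals to a known explicit description of filtered colimits in $\mathrm{Cat}_\infty$ (attributed to Rozenblyum). Full faithfulness of the structure maps, combined with that description, immediately yields the stated formula for objects and mapping complexes.

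By contrast, you work directly at the model-categorical level: you build the candidate dg-category $\cC'$ by hand, verify well-definedness of Hom-complexes and composition using full faithfulness and filteredness, and then argue that the strict filtered colimit of dg-categories already computes the homotopy colimit in the Morita model structure. This is more self-contained in that it avoids the detour through stable $\infty$-categories, but the step you correctly flag as the main obstacle --- that the naive strict colimit genuinely models the \emph{homotopy} colimit --- is exactly what the paper sidesteps by importing the $\infty$-categorical machinery wholesale. Your suggested fixes (replacing fully faithful maps by cofibrations, or arguing directly that filtered colimits preserve Morita equivalences) are workable, but each would need a precise reference or argument (for instance via To\"en's results on the homotopy theory of dg-categories) to be airtight. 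The paper's route trades all of that model-categorical bookkeeping for two black-box citations; your route is more hands-on and arguably more illuminating about why the formula holds, at the cost of having to pin down the one technical point you already identified.
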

\begin{proof}
Cohn has shown in \cite{Cohn} that the 
underlying  $\infty$-category of the category of dg-categories, with the Morita model structure, is the $\infty$-category of $k$-linear stable categories. Furthermore, as stabilization is a left adjoint, we
can compute a filtered colimit of stable $k$-linear $\infty$-categories in the category of $\infty$-categories. A description of filtered colimits of $\infty$--categories is given in  \cite{R}. Then the fully faithfulness of the structure maps, together with the result of  \cite{R}, implies our claim.  
\end{proof}

We need a refinement of this result to the case where the categories $\cC_i$ are the dg enhancements of bounded derived categories, and the structure maps are exact. Notations and assumptions in the next lemma are carried over from   Theorem \ref{colim}. 

\begin{lemma} \label{commute colim}
Assume that $\cC_i = D^b_{dg}(A_i)$, where 
$\{A_i\}_{i \in I}$ is a filtered system of abelian categories. Assume also that the structure maps preserve the abelian categories, that is, for all $i \to j$ in $I$,     
$
\alpha_{i \to j}(A_i) \subset A_j
$. 
Then there is an equivalence 
$$\varinjlim_i \cC_i = \varinjlim_i D^b_{dg}(A_i) \simeq  D^b_{dg}(\varinjlim_i A_i).$$ 
\end{lemma}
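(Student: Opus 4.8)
The plan is to build the equivalence directly on the explicit model for colimits of dg-categories with fully faithful structure maps provided by Theorem \ref{colim}, and to match it with the standard construction of bounded derived categories. First I would recall that, by Theorem \ref{colim}, the objects of $\varinjlim_i D^b_{dg}(A_i)$ are $\bigcup_i \mathrm{Ob}(D^b_{dg}(A_i))$, i.e. bounded complexes over some $A_i$, with $\Hom$-complexes computed by first pushing both objects forward into a common $D^b_{dg}(A_l)$. Since each structure map $\alpha_{i\to j}$ is exact (being induced by the exact functor $A_i\to A_j$) and fully faithful, and since for an exact fully faithful functor of abelian categories the induced functor on bounded derived categories is fully faithful, the hypotheses of Theorem \ref{colim} are genuinely met; this is the point to verify carefully at the start.

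The core of the argument is to produce a functor $F\colon \varinjlim_i D^b_{dg}(A_i)\to D^b_{dg}(\varinjlim_i A_i)$ and show it is an equivalence. On objects, a bounded complex over $A_i$ is sent to the same complex viewed over $\varinjlim_i A_i$ via the canonical functor $A_i\to\varinjlim_i A_i$; on $\Hom$-complexes, one uses that for a fixed pair of complexes $A_i, A_j$ and a common index $l$, the complex $\Hom_{D^b_{dg}(A_l)}(\alpha_{l\to i}A_i,\alpha_{l\to j}A_j)$ maps to $\Hom_{D^b_{dg}(\varinjlim A_i)}(FA_i,FA_j)$. Well-definedness (independence of the choice of $l$) follows from the compatibility built into Theorem \ref{colim}. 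To check $F$ is fully faithful one computes, for complexes $A_i\in D^b_{dg}(A_i)$ and $A_j\in D^b_{dg}(A_j)$ over a common $l$,
\begin{equation*}
H^n\Hom_{D^b_{dg}(\varinjlim A)}(FA_i,FA_j)\;=\;\Hom_{D(\varinjlim A)}(A_i,A_j[n])\;=\;\varinjlim_{l'\geq l}\Ext^n_{A_{l'}}(\alpha A_i,\alpha A_j),
\end{equation*}
where the last identity uses that cohomology and $\Ext$-groups commute with filtered colimits of abelian categories — concretely, a filtered colimit of abelian categories along exact functors is again abelian and the colimit functors are exact, and morphisms (resp. extensions) in the colimit are filtered colimits of morphisms (resp. extensions); this matches exactly the $\Hom$-complex in $\varinjlim_i D^b_{dg}(A_i)$ described by Theorem \ref{colim}. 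Essential surjectivity is immediate after passing to the Morita (idempotent-completed, triangulated) closure, since every bounded complex over $\varinjlim_i A_i$ has all its (finitely many) cohomology objects, hence itself up to the bounded filtration, coming from some finite stage $A_i$; more care is needed if one insists on the strict model, where one argues that an arbitrary bounded complex of objects of $\varinjlim_i A_i$ is, because it involves only finitely many objects and morphisms, isomorphic to the image of a complex over a single $A_i$.

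The main obstacle I anticipate is the precise handling of the interaction between the homotopy-theoretic colimit of dg-categories (taken in the Morita model structure, so only well-defined up to idempotent completion and quasi-equivalence) and the strict, point-set model given by Theorem \ref{colim}: one must make sure the equivalence $\varinjlim_i D^b_{dg}(A_i)\simeq D^b_{dg}(\varinjlim_i A_i)$ is claimed at the correct level (Morita equivalence of dg-categories, i.e. an equivalence after idempotent completion), and that $D^b_{dg}(\varinjlim_i A_i)$ is itself idempotent-complete so that no spurious objects are missed. A secondary subtlety is confirming that $\varinjlim_i A_i$ is genuinely an abelian category with exact colimit functors $A_i\to\varinjlim_i A_i$, which is needed both to make sense of $D^b_{dg}(\varinjlim_i A_i)$ and to run the $\Ext$-commutes-with-filtered-colimits computation; this is standard but worth stating. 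Once these foundational points are pinned down, the fully faithfulness and essential surjectivity checks are routine diagram-chases.
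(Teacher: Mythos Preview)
Your approach is correct and runs along essentially the same lines as the paper's, but it is organized differently and is somewhat more hands-on. The paper argues as follows: from the explicit description of $\cC=\varinjlim_i D^b_{dg}(A_i)$ given by Theorem~\ref{colim}, one sees that the colimit of the canonical t-structures on the $D^b_{dg}(A_i)$ defines a bounded t-structure on $\cC$ with heart $\varinjlim_i A_i$; the inclusion of the heart is then a $\delta$-functor, so it extends to a functor $D^b_{dg}(\varinjlim_i A_i)\to \cC$ (since the source is the triangulated hull of $\varinjlim_i A_i$), and one concludes that this is an equivalence by invoking Lurie's recognition criterion \cite[Proposition~1.3.3.7]{Lu2} together with the explicit Hom-complex description from Theorem~\ref{colim}.

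By contrast, you construct the comparison functor in the opposite direction and verify fully faithfulness and essential surjectivity by direct computation. What this buys you is that you avoid the appeal to \cite{Lu2}: your argument is self-contained once Theorem~\ref{colim} is in hand, at the cost of having to justify carefully that $\Hom_{D^b(\varinjlim A)}(X,Y[n])$ is the filtered colimit of the $\Hom_{D^b(A_l)}(X,Y[n])$ (your ``roof'' argument, via finiteness of the data in a bounded complex, is the right way to do this, though note that your objects $A_i,A_j$ are complexes, so writing $\Ext^n_{A_{l'}}$ is slightly abusive notation for $\Hom_{D^b(A_{l'})}(-,-[n])$). The paper's route is shorter and more conceptual, since the t-structure packaging absorbs exactly these verifications into a single citation; your route makes the mechanism more visible. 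Both are valid, and the core computation---matching Hom-complexes in the colimit with those in $D^b_{dg}(\varinjlim_i A_i)$ via Theorem~\ref{colim}---is the same in either case.
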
 
\begin{proof}
It is easy to see from the explicit description of $ \cC= \varinjlim \cC_i$ given in Theorem \ref{colim}  that the colimit of the canonical $t$-structures in $\cC_i$ forms a $t$-structure on $\cC$ with heart
$\varinjlim_i A_i$.  The $\delta$-functor $ \varinjlim A_i \to \cC$ induces a functor 
$D^b_{dg}(\varinjlim A_i)\to \varinjlim \cC_i =  \cC$, as $D^b_{dg}(\varinjlim A_i)$ is the triangulated hull  of $\varinjlim A_i$.  The fact that this  is an equivalence follows from \cite[Proposition 1.3.3.7]{Lu2} and the explicit descriptions of the Hom complexes in $\cC$. 
\end{proof}

\begin{proposition}\label{prop.db.irs}
Let $X$ be a noetherian fine saturated log algebraic stack over $k$, with locally free log structure. Then there is an equivalence of dg-categories
$$
\varinjlim_n \Db{\radice[n]{X}}\simeq \Db{\radice[\infty]{X}}.
$$
\end{proposition}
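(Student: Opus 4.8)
The plan is to exhibit an explicit comparison functor $\Psi\colon \varinjlim_n \Db{\radice[n]{X}}\to \Db{\radice[\infty]{X}}$ and to verify that it is an equivalence, using on the left the concrete model for a filtered colimit of dg-categories with fully faithful transition maps (Theorem \ref{colim}) and on the right the structure theory of the infinite root stack from \cite{TV}. \emph{Step 1: the transition maps are fully faithful.} For $n\mid m$ write $\pi_{mn}\colon \radice[m]{X}\to \radice[n]{X}$ for the projection. Since the log structure of $X$ is locally free this map is flat, and I would first check that it is moreover cohomologically affine with $(\pi_{mn})_*\cO_{\radice[m]{X}}\cong \cO_{\radice[n]{X}}$; both statements can be checked \'etale-locally on $X$, where $\pi_{mn}$ is, up to a chart, a product of root-stack morphisms, and the computation reduces to the exactness of taking the degree-zero part of a representation of $\mu_{m/n}^l$ in characteristic zero. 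Granting this, the projection formula gives, for $E,F\in \Db{\radice[n]{X}}$,
$$
\Hom(\pi_{mn}^*E,\pi_{mn}^*F)\simeq \Hom\bigl(E,F\otimes (\pi_{mn})_*\cO_{\radice[m]{X}}\bigr)\simeq \Hom(E,F),
$$
so $\pi_{mn}^*$ is fully faithful, and Theorem \ref{colim} furnishes a concrete description of $\cC:=\varinjlim_n \Db{\radice[n]{X}}$ whose objects are the perfect complexes on the finite root stacks and whose morphism complexes are computed at any common finite stage.

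\emph{Step 2: the functor $\Psi$ and its full faithfulness.} Pullback along the projection $p_n\colon \radice[\infty]{X}\to \radice[n]{X}$ is symmetric monoidal, hence preserves dualizable objects, so it restricts to $p_n^*\colon \Db{\radice[n]{X}}\to \Db{\radice[\infty]{X}}$; these functors are compatible with the transition maps and glue to $\Psi$. To see that $\Psi$ is fully faithful I would use that $\radice[\infty]{X}=\varprojlim_m \radice[m]{X}$ is cohomologically affine over each $\radice[n]{X}$ — concretely, quasi-coherent cohomology over $\radice[\infty]{X}$ is the filtered colimit of the quasi-coherent cohomologies over the $\radice[m]{X}$, via the compatible groupoid presentations of \cite[Section 4]{TV} — so that $(p_n)_*\cO_{\radice[\infty]{X}}\cong \cO_{\radice[n]{X}}$, and the same projection-formula computation as in Step 1 gives $\Hom(p_n^*E,p_n^*F)\simeq \Hom_{\radice[n]{X}}(E,F)$, which matches the left-hand side by the model of Step 1.

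\emph{Step 3: essential surjectivity.} Finally I would show that every perfect complex on $\radice[\infty]{X}$ is pulled back from a finite root stack. The argument is to identify $\Db{\radice[\infty]{X}}$ — the dualizable objects of $\Qcoh(\radice[\infty]{X})$ — with the compact objects, to invoke $\Qcoh(\radice[\infty]{X})\simeq \varinjlim_n \Qcoh(\radice[n]{X})$ (a colimit of presentable categories along the $p_n^*$, whose right adjoints commute with filtered colimits by the cohomological affineness above), and to observe that since each $p_n^*$ preserves compact objects the compact objects of the colimit are the colimit of the $\Db{\radice[n]{X}}$; no idempotent completion is needed because the transition maps are already fully faithful and the categories are idempotent complete. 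This identifies the essential image of $\Psi$ with all of $\Db{\radice[\infty]{X}}$ and finishes the proof.

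I expect Step 3 to be the main obstacle: one has to make rigorous the slogan that perfect complexes on the non-algebraic (but pro-algebraic) stack $\radice[\infty]{X}$ are detected at finite level. Because the transition maps $\radice[m]{X}\to\radice[n]{X}$ are far from affine, the standard ``limit of affine schemes'' results recalled in Remark \ref{limit of affine} do not apply, and the required finiteness has to be extracted from the explicit presentations of \cite{TV} together with the linear reductivity of the groups $\mu_n$ in characteristic zero. In the special case — covering all the geometric situations in which the proposition is used — where the finite root stacks $\radice[n]{X}$ are all regular, this step can instead be bypassed: then $\Db{\radice[n]{X}}=D^b_{dg}(\Coh(\radice[n]{X}))$, the transition maps preserve the hearts by flatness, and $\Coh(\radice[\infty]{X})\simeq\varinjlim_n\Coh(\radice[n]{X})$ by \cite{TV}, so that Lemma \ref{commute colim} applies directly.
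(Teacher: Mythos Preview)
Your Steps 1 and 2 are fine, and the paper implicitly relies on exactly this full faithfulness. The paper, however, takes a different and shorter route to essential surjectivity, and your Step 3 as written has a gap.

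The paper does \emph{not} work with compactness in $\Qcoh$. Instead it applies Lemma \ref{commute colim} directly to the system $\{D^b_{dg}(\Coh(\radice[n]{X}))\}_n$. This requires no regularity: the transition functors are exact (flat pullback, since the log structure is locally free) and fully faithful, so Lemma \ref{commute colim} gives
\[
\varinjlim_n D^b_{dg}(\Coh(\radice[n]{X}))\;\simeq\;D^b_{dg}\bigl(\varinjlim_n \Coh(\radice[n]{X})\bigr)\;\simeq\;D^b_{dg}(\Coh(\radice[\infty]{X})),
\]
the last equivalence by \cite[Proposition 6.1 and Proposition 4.19]{TV}. The paper then observes that $\cO_{\radice[\infty]{X}}$ is coherent, so $\Perf(\radice[\infty]{X})\subseteq D^b_{dg}(\Coh(\radice[\infty]{X}))$, and characterizes $\Perf$ on both sides as the full subcategory of \emph{dualizable} objects inside $D^b_{dg}(\Coh)$. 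Since the transition functors are symmetric monoidal and fully faithful, the explicit model of Theorem \ref{colim} shows that an object of the colimit is dualizable if and only if it is dualizable at some finite stage; this gives $\varinjlim_n \Perf(\radice[n]{X})\simeq \Perf(\radice[\infty]{X})$.

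Your Step 3 instead tries to identify $\Perf(\radice[\infty]{X})$ with the compact objects of $\Qcoh(\radice[\infty]{X})$. Even granting the colimit description of $\Qcoh(\radice[\infty]{X})$ in presentable categories, the equality ``dualizable $=$ compact'' is exactly what \cite{BFN} proves for \emph{perfect} algebraic stacks, and the infinite root stack is not algebraic, so this step is not justified by the references you invoke. Your closing paragraph comes very close to the paper's argument but over-restricts it: Lemma \ref{commute colim} already applies to $D^b_{dg}(\Coh(-))$ without any regularity hypothesis on the $\radice[n]{X}$; regularity would only be needed if you insisted on the identification $\Perf(\radice[n]{X})=D^b_{dg}(\Coh(\radice[n]{X}))$ at finite level, and the paper sidesteps this entirely by passing to dualizable objects inside $D^b_{dg}(\Coh)$ at the end.
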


\begin{proof}
Applying Lemma \ref{commute colim}, we obtain that the colimit $\varinjlim_n D^b_{dg}(\Coh(\radice[n]{X}))$  is equivalent to $D^b_{dg}(\varinjlim_n \Coh(\radice[n]{X}))$. Since $X$ is noetherian and the log structure is locally free, we can apply \cite[Proposition 6.1]{TV} (together with \cite[Proposition 4.19]{TV}), and conclude that the direct limit $\varinjlim_n \Coh(\radice[n]{X})$ is equivalent to the abelian category $\Coh(\radice[\infty]{X})$. This implies that
$$
\varinjlim_n D^b_{dg}(\Coh(\radice[n]{X})) \simeq 
D^b_{dg}(\Coh(\radice[\infty]{X})). 
$$ 

Note that there is an inclusion 
$\Db{\radice[\infty]{X}} \subseteq D^b_{dg}(\Coh(\radice[\infty]{X}))$. Indeed, by  \cite[Proposition 4.19]{TV}, the structure sheaf  $\cO_{\radice[\infty]{X}}$ is coherent. Thus, using 
\cite[Section 1.5]{Ga}, $\radice[\infty]{X}$ is eventually coconnective and, by the  proof of \cite[Proposition 1.5.3]{Ga}, this implies that $\Perf(  \radice[\infty]{X}) \subseteq 
D^b_{dg}(\Coh(\radice[\infty]{X}) )$. Further 
$\Db{\radice[\infty]{X}} $ can be characterized as 
the full subcategory of dualizable objects inside $D^b_{dg}(\Coh(\radice[\infty]{X}))$. In symbols, we write 
$$
\Db{\radice[\infty]{X}}  =( D^b_{dg}(\Coh(\radice[\infty]{X})))^{\mathrm{dual}}
$$ 
From the explicit description of  $\varinjlim_n D^b_{dg}(\Coh(\radice[n]{X}))$ given by Theorem \ref{colim}, and the fact that the structure maps are monoidal, we  
conclude that the dualizable objects in $D^b_{dg}(\Coh(\radice[\infty]{X}))$ are given by the image of  the dualizable objects in the categories $D^b_{dg}(\Coh(\radice[n]{X}))$. In formulas 
$$\varinjlim_n \Db{\radice[n]{X}} =  \varinjlim_n (D^b_{dg}(\Coh(\radice[n]{X})))^{\mathrm{dual}} \simeq (D^b_{dg}( \Coh(\radice[\infty]{X}) ))^{\mathrm{dual}} \simeq \Db{\radice[\infty]{X}},$$ which is what we wanted to prove.
\end{proof}

Note that by \cite[Theorem 7.3]{TV} the category of quasi-coherent sheaves on $\radice[\infty]{X}$ is equivalent to the category $\Par(X)$ of parabolic sheaves on $X$ with rational weights. Therefore we can also identify $D^b_{dg}(\Coh(\radice[\infty]{X}))$  with the (dg enhancement of the) bounded derived category of the category of coherent parabolic sheaves with rational weights on $X$.

%%%%%%%%%%%%%%%%%%%%%%%%%%%%%%%%%%%%%%%%%%%

\section{Setup and statement of the main theorem}\label{sec:simple.log.ss}

In this section we specify our setup and explain the construction of the two objects that we will prove to be ``derived equivalent''. We will then state our main result (Theorem \ref{thm:main}), and start with the proof in the next section.

\begin{assumption}
Let $X$ be a fine saturated log algebraic stack over $k$ with affine stabilizer groups, which is moreover \emph{log flat} over the base field $k$ (equipped with the trivial log structure), and such that the log structure is \emph{locally free}, i.e. for every geometric point $x\to X$ we have an isomorphism $\overline{M}_{X,x}\cong \bN^{s}$ for some $s\in \bN$.
\end{assumption}

Note that such an $X$ has local charts of the form $X\to \bA^s$, where $\bA^s$ has its canonical toric log structure, and log flatness is equivalent to requiring that these maps be flat in the classical sense (see for example \cite[Theorem 4.15]{Ols}).  In fact, we require log flatness of $X$ exactly in order to have flatness of these chart morphisms. This is ultimately used  (in Section \ref{sec:colocopro}) to ensure that certain fibered products along these morphisms coincide with the derived fiber product, and therefore we can compare the dg-category of perfect complexes on such a fiber product with perfect complexes of the factors (Lemma \ref{lem:fibered.prod}). 

\begin{example}
A smooth variety $Y$ equipped with a normal crossings divisor $D$ gives a log scheme $(Y,D)$, which is of this form. More generally, if $Z\to Y$ is a flat morphism (where $Y$ is as above), then we can equip $Z$ with the pullback log structure, and the resulting log scheme will still satisfy our assumptions.
\end{example}

One of the sides of our derived equivalence is given by root stacks of $X$. For every $n\in \bN$ we can form $\radice[n]{X}$, and the inverse limit of this system is the infinite root stack
$\irs{X}=\varprojlim_n \radice[n]{X}$.

Describing the other side is slightly more complicated. Instead of forming root stacks we are going to take log blow-ups, but before doing that we need to perform a preliminary construction. We consider a second log structure on $X$ (or, more precisely, on the underlying algebraic stack $\underline{X}$ of $X$) as follows.

Let $C\subseteq X$ be the reduced closed substack where the log structure of $X$ is concentrated (i.e. the stalk $\overline{M}_{X,x}$ is not zero), and denote by $\bN_C$ the constant sheaf over $C$ with stalks equal to the monoid $\bN$ (pushed forward to $X$). In our setting, there is a unique homomorphism of sheaves of monoids $\bN_C\to \overline{M}_X$ on $X$ such that for every geometric point $x\to C$, the induced homomorphism $$(\bN_C)_x=\bN \to \overline{M}_{X,x}\cong \bN^s$$ is the diagonal map $n\mapsto (n,\hdots,n)$ (note that this does not depend on the chosen isomorphism $\overline{M}_{X,x}\cong \bN^s$). By composing this homomorphism with the symmetric monoidal functor $\overline{M}_X\to \Div_X$ that encodes the given log structure of $X$, we obtain a second log structure on the algebraic stack $\underline{X}$. Let us denote by $X'$ the resulting log algebraic stack. There is a natural morphism of log algebraic stacks $X\to X'$, that on charts is described by the diagonal homomorphism $\bN\to \bN^s$.

Locally where $X$ has a chart $X\to \bA^s$, we can consider the toric morphism $\bA^s\to \bA^1$ given by $(x_1,\hdots, x_s)\mapsto x_1\cdots x_s$, and the pullback on $X$ of the log structure of $\bA^1$ via the composite $X\to \bA^s\to \bA^1$. These locally defined log structures on $X$ glue together to give the global one described in the previous paragraph. In particular $X'$ is again a fine saturated log algebraic stack.

\begin{remark}\label{rmk:a1gm}
If $(Y,D)$ is a smooth variety with a normal crossings divisor, then this second log structure can be described as follows: consider the morphism $Y\to [\bA^1/\bG_m]$ determined by the line bundle $\cO_X(D)$ with its tautological global section (recall that morphisms to $[\bA^1/\bG_m]$ are classified by line bundles with global sections). The alternative log structure on $Y$ is obtained by pulling back the natural log structure of $[\bA^1/\bG_m]$ via this map. If the divisor $D$ has a global equation $\{f=0\}$, then $Y\to [\bA^1/\bG_m]$ factors through $Y\to \bA^1$ determined by the regular function $f$ on $Y$, and the log structure can be pulled back from this $\bA^1$.
\end{remark}

Before taking root stacks and log blow-ups, we will preliminarily extract every possible root out of the log structure of $X'$. {For later reference let us introduce special notations for the root stacks of $X'$.}

\begin{definition}
\label{def:xn}
{We denote by $X_n$ the $n$-th root stack $\radice[n]{X'},$ and by $X_\infty$ the infinite root stack $\irs{X'}$.}
\end{definition}

Since taking root stacks commutes with strict base change, locally where $X$ has a chart of the form $X\to \bA^s$ we have cartesian diagrams
$$
\xymatrix{
 X_n\ar[r]\ar[d] & X\ar[d]  && X_\infty\ar[r]\ar[d] & X\ar[d]\\
\radice[n]{\bA^1}\ar[r] & \bA^1
 && \irs{\bA^1}\ar[r] & \bA^1}
$$
where $X\to \bA^1$ is the composite $X\to \bA^s\to \bA^1$, as discussed above. 

\begin{remark}
As mentioned in the introduction, the significance of this operation is best understood by thinking of $X$ as the total space of a simple kind of semistable degeneration. We will be more precise about these degenerations in Section \ref{sec:simple.log.sss}. Suppose for simplicity that we are in the local situation, so there is a flat chart $X\to \bA^s$, that we can compose as before with $\bA^s\to \bA^1$. Assume also that the resulting morphism $X\to \bA^1$ is log smooth.

If we think of $X$ as a degeneration over $\bA^1$ with singular central fiber, we want to modify $X$ in order to add stackyness over the singular locus of the central fiber (via root stacks), or to allow the central fiber to degenerate by inserting additional components in the singular locus (via log blow-ups). In order to do this, there are at least two problems with our picture: root stacks $\radice[n]{X}$ of $X$ will have stacky structure along the whole central fiber (not only on the singular locus), and neither root stacks nor log blow-ups will have reduced fibers over the base $\bA^1$ itself. This can already be seen in the simple case of $\bA^2\to \bA^1$ sending $(x,y)$ to $xy$.

Both problems are solved if, before modifying in either way, we extract roots of the coordinate of the base $\bA^1$, by base changing to root stacks $\radice[n]{\bA^1}$ for various values of $n$ (or for $n=\infty$, at the limit). Note that  $\radice[n]{\bA^1}$ has a natural \'etale presentation given by $\bA^1$ itself acted on by $\mu_n$ in the obvious way, and the composite $\bA^1\to \radice[n]{\bA^1}\to \bA^1$ is the map sending $t$ to $t^n$. Base changing through this kind of finite maps is of course familiar from the context of semistable reduction.
\end{remark}

{On one side of our derived equivalence we will have the infinite root stack $\irs{X_\infty}$.} Note that there is a canonical isomorphism of stacks $\irs{X_\infty}\to \irs{X}$. For the other side of our derived equivalence, we {will} consider the valuativization $X_\infty^\val=\varprojlim_{(n,\cI_n^\alpha)} (X_n)_{\cI_n^\alpha}$, where $I=\{(n,\cI_n^\alpha)\}$ is the direct system of coherent sheaves of ideals of the log structure on the stacks $X_n=\radice[n]{X'}$ that we already considered in (\ref{sec:valuativization}), and $(X_n)_{\cI_n^\alpha}$ denotes the log blow-up along $\cI_n^\alpha$. 

Let us also consider the fibered product $Y:=\radice[\infty]{X}\times_{X_\infty} X_\infty^\val$, and the resulting cartesian diagram
$$
\xymatrix{
& Y\ar[ld]\ar[rd] & \\
\radice[\infty]{X} \ar[rd] & & X_\infty^{\val} \ar[ld] \\ 
& X_\infty & 
}
$$
where everything also lives over the original $X$.

Recall from {Section \ref{sec:dg.categories}}  that we define the categories of perfect complexes of $\irs{X}$ and $(X_\infty)^\val$ by taking a direct limit of the categories of perfect complexes of the algebraic stacks that make up their defining inverse system (note that for this we are using the whole inverse system, and not just the limit). In other words
$$\Db{\irs{X}}=\varinjlim_n \Db{\radice[n]{X}}$$ and $$\Db{X_\infty^\val}=\varinjlim_{(n,\cI_n^\alpha)} \Db{(X_n)_{\cI_n^\alpha}}.$$
As explained {earlier}, the first formula can actually be seen as a statement rather than a definition, at least if $X$ is noetherian (Proposition \ref{prop.db.irs}).

Now we observe that there is a natural functor $\Phi\colon \Db{X_\infty^\val}\to  \Db{\radice[\infty]{X}}$, which is morally the Fourier-Mukai functor given by the structure sheaf $\cO_Y$ of the fibered product. Since $Y$ is just a fibered category in our formalism, instead of trying to make precise sense of this, let us note that $\Phi$ can be defined as a colimit of ``honest'' Fourier-Mukai functors.

Attached to the filtered direct system $I=\{(n,\cI_n^\alpha)\}$, we have three corresponding inverse systems of stacks: we can assign to $(n,\cI_n^\alpha)$ either:
\begin{itemize}
\item the stack $X_n$ (note that this disregards $\cI_n^\alpha$). For $(m,\cI_m^\beta)\geq (n,\cI_n^\alpha)$ we have a natural map $X_m\to X_n$, and the inverse limit is the stack $X_\infty$;
\item the stack $\radice[n]{X}$ (this disregards $\cI_n^\alpha$ as well). For $(m,\cI_m^\beta)\geq (n,\cI_n^\alpha)$ we have a natural map $\radice[m]{X}\to \radice[n]{X}$, and the inverse limit is the stack $\irs{X}$;
\item the log blow-up $(X_n)_{\cI_n^\alpha}$. For $(m,\cI_m^\beta)\geq (n,\cI_n^\alpha)$ we have a natural map $(X_m)_{\cI_m^\beta}\to (X_n)_{\cI_n^\alpha}$, and the inverse limit is the stack $X_\infty^\val$.
\end{itemize}
We can also set $Y_{(n,\cI_n^\alpha)}:=\radice[n]{X}\times_{X_n}(X_n)_{\cI_n^\alpha}$. Together with the natural transition maps this gives a fourth inverse system, and the inverse limit is canonically isomorphic to $Y=\radice[\infty]{X}\times_{X_\infty} X_\infty^\val$.

Moreover, for every $(n,\cI_n^\alpha)\in I$ we have a Fourier-Mukai functor $$\Phi_{(n,\cI_n^\alpha)}\colon \Db{(X_n)_{\cI_n^\alpha}}\to \Db{\radice[n]{X}}$$ given by pull-push along the projections $Y_{(n,\cI_n^\alpha)}\to  (X_n)_{\cI_n^\alpha}$ and $Y_{(n,\cI_n^\alpha)}\to  \radice[n]{X}$.

\begin{remark}
Note that since we are working with no smoothness or finite type assumptions, it is not a priori clear that the functor $\Phi_{(n,\cI_n^\alpha)}\colon \Qcoh((X_n)_{\cI_n^\alpha})\to \Qcoh(\radice[n]{X})$ will carry perfect complexes to perfect complexes. This will follow from our proof of the main theorem (see in particular the proof of Proposition \ref{prop:fm.local}).

For simplicity we will gloss over this detail in the rest of the paper, and pretend that we already know that the functors $\Phi_{(n,\cI_n^\alpha)}\colon \Db{(X_n)_{\cI_n^\alpha}}\to \Db{\radice[n]{X}}$ are well-defined.
\end{remark}

We will show in the next proposition that these functors are compatible with the structure maps (given by pullback) of the two direct systems of dg-categories $\{\Db{\radice[n]{X}} \}_{(n, \cI_n^\alpha)\in I}$ and $\{\Db{(X_n)_{\cI_n^\alpha}}\}_{(n,\cI_n^\alpha)\in I}$, so that we can consider the colimit
$$
\Phi:=\varinjlim_{(n,\cI_n^\alpha)\in I} \Phi_{(n,\cI_n^\alpha)}\colon\Db{X_\infty^\val}\to  \Db{\irs{X}}.
$$

\begin{proposition}\label{prop:FM.compatible}
Let $(n,\cI_n^\alpha)$ and $(m,\cI_m^\beta)$ be two elements of the index system $I$, such that $(m,\cI_m^\beta)\geq (n,\cI_n^\alpha)$, i.e. $m\geq n$ by divisibility, and $\cI_m^\beta\geq (\cI_n^\alpha)^m$, where $(\cI_n^\alpha)^m$ denotes the sheaf of ideals generated by the pullback of $\cI_n^\alpha$ to $\radice[m]{X}$. In this situation we have maps $v\colon \radice[m]{X}\to \radice[n]{X}$ and $w\colon (X_m)_{\cI_m^\beta}\to (X_n)_{\cI_n^\alpha}$.

Then the induced diagram of dg-categories
$$
\xymatrix@C=2cm{
\Db{(X_n)_{\cI_n^\alpha}} \ar[r]^{\Phi_{(n,\cI_n^\alpha)}}\ar[d]_{w^*} & \Db{\radice[n]{X}}\ar[d]^{v^*} \\
\Db{(X_m)_{\cI_m^\beta}}\ar[r]^{\Phi_{(m,\cI_m^\beta)}} & \Db{\radice[m]{X}}
}
$$
is commutative.
\end{proposition}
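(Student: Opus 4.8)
The plan is to present the square as an instance of flat base change, supplemented by the vanishing $Rf_*\cO\simeq\cO$ for an auxiliary log blow-up, after a routine reduction along the partial order on $I$. First I would fix the two projections $q_n\colon Y_{(n,\cI_n^\alpha)}\to (X_n)_{\cI_n^\alpha}$ and $p_n\colon Y_{(n,\cI_n^\alpha)}\to\radice[n]{X}$ (and likewise with $m$ in place of $n$), so that $\Phi_{(n,\cI_n^\alpha)}=(p_n)_*\circ q_n^*$. From the universal property of the fibered products $Y_{(\cdot)}=\radice[\cdot]{X}\times_{X_\cdot}(X_\cdot)_{\cI_\cdot}$ together with the transition maps $v\colon\radice[m]{X}\to\radice[n]{X}$, $w\colon (X_m)_{\cI_m^\beta}\to (X_n)_{\cI_n^\alpha}$ and $X_m\to X_n$, there is a unique morphism $r\colon Y_{(m,\cI_m^\beta)}\to Y_{(n,\cI_n^\alpha)}$ with $p_n\circ r=v\circ p_m$ and $q_n\circ r=w\circ q_m$. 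Granting this, the square commutes as soon as one exhibits a canonical isomorphism $v^*\circ(p_n)_*\simeq (p_m)_*\circ r^*$, since it then gives $v^*\Phi_{(n,\cI_n^\alpha)}=v^*(p_n)_*q_n^*\simeq (p_m)_*r^*q_n^*=(p_m)_*(q_n r)^*=(p_m)_*(w q_m)^*=\Phi_{(m,\cI_m^\beta)}w^*$.

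To build the isomorphism $v^*(p_n)_*\simeq (p_m)_*r^*$ I would factor the relation $(m,\cI_m^\beta)\geq (n,\cI_n^\alpha)$ in $I$ as $(n,\cI_n^\alpha)\leq (m,(\cI_n^\alpha)^m)\leq (m,\cI_m^\beta)$ (notation of the statement) and treat the two elementary steps separately. For the \emph{root-stack step} $(n,\cI_n^\alpha)\leq (m,(\cI_n^\alpha)^m)$, comparing universal properties — using that the pullback of a sheaf of ideals of the log structure is functorial — identifies $(X_m)_{(\cI_n^\alpha)^m}$ with $X_m\times_{X_n}(X_n)_{\cI_n^\alpha}$, whence
$$Y_{(m,(\cI_n^\alpha)^m)}=\radice[m]{X}\times_{X_m}(X_m)_{(\cI_n^\alpha)^m}\simeq \radice[m]{X}\times_{X_n}(X_n)_{\cI_n^\alpha}\simeq \radice[m]{X}\times_{\radice[n]{X}}Y_{(n,\cI_n^\alpha)},$$
so the relevant square of stacks is cartesian; since $v$ is flat (the log structure of $X$ being locally free), flat base change gives $v^*(p_n)_*\simeq (p_m)_*r^*$. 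For the \emph{log blow-up step} $(m,(\cI_n^\alpha)^m)\leq (m,\cI_m^\beta)$, where now $v=\id$, the transition map $w$ is itself a log blow-up of a normal stack with rational singularities (locally a finite quotient of a toric variety), so $Rw_*\cO\simeq\cO$; the square with vertices $Y_{(m,\cI_m^\beta)},Y_{(m,(\cI_n^\alpha)^m)},(X_m)_{\cI_m^\beta},(X_m)_{(\cI_n^\alpha)^m}$ is cartesian by construction, and the log flatness of $X$ ensures it computes the derived fibered product (cf. Lemma \ref{lem:fibered.prod}), so base change and the projection formula show that the unit $\id\to r_*r^*$ is an isomorphism, hence $(p_n)_*\simeq (p_n)_*r_*r^*=(p_m)_*r^*$. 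Composing the two steps yields the isomorphism in general, and the first paragraph concludes.

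The step I expect to be the genuine obstacle is the last one: since $\radice[m]{X}\to\radice[n]{X}$ and $X_m\to X_n$ are \emph{not} strict, one cannot appeal to compatibility of log blow-ups with strict base change, and must instead check — via the local toric models for $X_n$, $(X_n)_{\cI_n^\alpha}$ and $\radice[n]{X}$, together with the log flatness hypothesis — that the square computing $r$ is of Tor-independent type, so that the vanishing $Rw_*\cO\simeq\cO$ (and, more generally, the passage from $\Qcoh$ to $\Db{-}$) survives the base change. The remaining ingredients — existence and compatibility of $r$, the universal-property identification in the root-stack step, and the invocations of flat base change and of the projection formula — are routine.
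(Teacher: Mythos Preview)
Your overall architecture matches the paper's: reduce the commutativity to an equivalence $v^*(p_n)_*\simeq (p_m)_*r^*$, realize the flat base change piece via the cartesian square over $\radice[n]{X}$, and handle the remaining discrepancy by an $R_*\cO\simeq\cO$ statement plus the projection formula. The paper does exactly this, but without your two-step factorization: it forms directly the ordinary fibered product $Z=\radice[m]{X}\times_{\radice[n]{X}}Y_{(n,\cI_n^\alpha)}$, uses flat base change for the square defining $Z$ (your ``root-stack step''), and then proves that the comparison map $c\colon Y_{(m,\cI_m^\beta)}\to Z$ satisfies $Rc_*\cO\simeq\cO$ (your ``log blow-up step''), so that $c_*c^*\simeq\id$.

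Where the two routes genuinely diverge is in how this last vanishing is established. You try to pull it back from $Rw_*\cO\simeq\cO$ along the square with base $\radice[m]{X}\to X_m$, and you correctly flag that this is where the difficulty lies: that map is not flat (locally it is the coarse-moduli map $[\bA^{r+1}/\mu_m^r]\to\bA^{r+1}/\mu_m^r$, which is not flat once the quotient is singular), so you are forced into a Tor-independence check that you do not carry out. The paper avoids this entirely: it localizes to a flat chart $X\to\bA^s$, so that $c$ becomes the pullback of a proper birational \emph{toric} morphism of toric stacks; passing to a toric presentation, the claim reduces to $R\pi_*\cO\simeq\cO$ for a proper birational toric morphism of toric varieties, which follows from rational singularities of toric varieties. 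This is both shorter and sidesteps the Tor-independence issue you anticipate.

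A secondary point: your identification $Y_{(m,(\cI_n^\alpha)^m)}\simeq Z$ follows from universal properties only in the \emph{fine saturated} category (log blow-ups commute with strict base change, but $X_m\to X_n$ is not strict), whereas the base-change square you need is the \emph{ordinary} fibered product. The paper never makes this identification; it just works with $Z$. If you want to keep your factorization, you should either verify that the fs and ordinary fibered products agree here, or simply drop the intermediate index $(m,(\cI_n^\alpha)^m)$ and argue with $Z$ directly as the paper does.
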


\begin{proof}
Consider the commutative diagram
$$
\xymatrix{
& \radice[m]{X}\ar[dd]^<<<<<v|!{[d]}\hole \ar[dl] & & Y_{(m,\cI_m^\beta)}\ar[dd]^u\ar[dl]^{p_{m}}\ar[ll]_{q_{m}}\\
{X_{m}} \ar[dd] & & (X_m)_{\cI_m^\beta}\ar[ll] \ar[dd]^<<<<<w& \\
& \radice[n]{X} \ar[dl] & & Y_{(n,\cI_n^\alpha)}\ar[dl]^{p_n} \ar[ll]_>>>>>>>>{q_n}|!{[l]}\hole\\
{X_n} & & (X_n)_{\cI_n^\alpha}\ar[ll] & 
}
$$
where as above $Y_{(n,\cI_n^\alpha)}$ and $Y_{(m,\cI_m^\beta)}$ are the fibered products of the bottom and top face of the cube, respectively.

We have to show that there is a natural equivalence of functors
$$
v^*\circ \Phi_{(n,\cI_n^\alpha)}\simeq \Phi_{(m,\cI_m^\beta)}\circ w^*\colon \Db{(X_n)_{\cI_n^\alpha}}\to \Db{\radice[m]{X}}.
$$
By writing out the Fourier-Mukai functors, this translates to an equivalence
$$
v^* \circ (q_{n})_* \circ  (p_n)^* \simeq (q_{m})_* \circ (p_{m})^* \circ w^*.
$$
Note that $(q_{m})_* \circ (p_{m})^* \circ w^* \simeq  (q_{m})_* \circ u^* \circ (p_n)^*$.  Thus it is enough to define a natural equivalence 
$$
v^* \circ (q_n)_* \simeq (q_{m})_* \circ u^*. 
$$
Consider the diagram 
$$
\xymatrix{
Y_{(m,\cI_m^\beta)}\ar[r]^u  \ar[d]_{q_{m}} & Y_{(n,\cI_n^\alpha)} \ar[d]^{q_n}\\
\radice[m]{X}\ar[r]^v & \radice[n]{X}
}
$$
{to which we want to apply a flat base change formula. Although the map $v$ is flat, because the log structure of $X$ is locally free, this diagram is not cartesian.}

Let $Z$ be the fiber product of  
$$
Y_{(n,\cI_n^\alpha)} \stackrel{q_n} \longrightarrow   \radice[n]{X}  
 \stackrel{v}\longleftarrow \radice[m]{X} ,  
 $$
and consider the commutative diagram 
$$
\xymatrix{
Y_{(m,\cI_m^\beta)} \ar[rd]^c\ar@/^1pc/[rrd]^u\ar@/_1pc/[ddr]_{q_{m}}& &\\
&Z\ar[r]^d  \ar[d]_{e} & Y_{(n,\cI_n^\alpha)}\ar[d]^{q_n}\\
&\radice[m]{X}\ar[r]^v & \radice[n]{X}
}
$$
We claim that $c_*\cO_{Y_{(m,\cI_m^\beta)}} \cong \cO_Z$. Note first that this is true if $X=\bA^s$ for some $s$: in that case, the morphism $c$ is a proper representable birational toric morphism of toric stacks. By passing to a toric presentation of the target, call it $U\to Z$, we reduce to proving the claim for $c'\colon U'=U\times_Z Y_{(m,\cI_m^\beta)}\to U$, which is a morphism with the same properties but between toric varieties. In this case, the statement follows from the fact that toric varieties have rational singularities (see for example \cite[Theorem 5.2]{cox}): in fact, if $\phi\colon W\to U'$ is a resolution of singularities, then the composite $c'\circ \phi\colon W\to U'\to U$ is a resolution as well, and we have both $\phi_*\cO_W\cong \cO_{U'}$ and $(c'\circ \phi)_*\cO_W=c'_*(\phi_*\cO_W)\cong \cO_U$, from which we get $c'_*\cO_{U'}\cong \cO_U$, as desired.

In the general case, the claim follows from localizing to where $X$ has a flat chart $X\to \bA^s$, and applying base change. We refer to Corollary 1.4.5 of \cite{drinfeld}, where base change and projection formula are proven for maps of ``QCA'' stacks (short for ``quasi-compact with affine automorphism groups''), which is a large class of derived stacks that includes in particular the ones appearing in our proof. 

By the projection formula, there is a natural equivalence $c_*\circ c^* \simeq \id$. Thus we have 
$$
(q_{m})_*\circ  u^* \simeq e_*\circ c_*\circ c^*\circ d^* \simeq e_*\circ d^* \simeq v^*\circ (q_n)_*,
$$
where the last equivalence is given again by base change (note that, since $v$ is flat, the ordinary fibered product in the diagram coincides with the derived one). This concludes the proof.
\end{proof}

We are now ready to state our main result.

\begin{theorem}\label{thm:main}
Let $X$ be a quasi-compact fine saturated log algebraic stack over $k$ with affine stabilizer groups, which is log flat over $k$ and has locally free log structure. Then the ``Fourier-Mukai'' functor $\Phi\colon \Db{X_\infty^\val}\to  \Db{\radice[\infty]{X}}$ described above is an equivalence of dg-categories.
\end{theorem}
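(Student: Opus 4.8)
The plan is to use that $\Phi$ is, by construction, the filtered colimit of the Fourier--Mukai functors $\Phi_{(n,\cI_n^\alpha)}\colon \Db{(X_n)_{\cI_n^\alpha}}\to \Db{\radice[n]{X}}$, which by Proposition \ref{prop:FM.compatible} fit into commuting squares with the transition functors of the two direct systems. A filtered colimit of a levelwise equivalence of such diagrams is again an equivalence, and a filtered colimit does not change upon passing to a cofinal subsystem (cf.\ Remark \ref{rmk:cofinal.subsystem}); so it suffices to produce a cofinal subset $J\subseteq I$ such that $\Phi_{(n,\cI_n^\alpha)}$ is an equivalence for every $(n,\cI_n^\alpha)\in J$.

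For $J$ I would take the pairs $(n,\cI_n^\alpha)$ for which the log blow-up $(X_n)_{\cI_n^\alpha}$ is a \emph{smooth} Deligne--Mumford stack. This is cofinal: \'etale-locally on $X$ there is a strict flat chart $X\to \bA^s$, which identifies $X_n$ with a toric stack (Example \ref{ex:toric.root} and its analogue for the base $\radice[n]{\bA^1}$); toric (stacky) blow-ups are cofinal among log blow-ups of such a stack by Remark \ref{rmk:toric.blow.up}, and iterated toric blow-ups resolve the singularities of a toric stack, while compatibility of log blow-ups with strict base change lets one patch the local resolutions into a globally coherent $\cI$. For such a pair, both $\radice[n]{X}$ and $(X_n)_{\cI_n^\alpha}$ are smooth DM stacks, proper and birational over $X_n$, and \'etale-locally $K$-equivalent over $X_n$ --- a combinatorial fact about the stacky fans involved --- which is exactly the input needed to apply Kawamata's derived McKay correspondence.

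It then remains to prove that, for $(n,\cI_n^\alpha)\in J$, the honest Fourier--Mukai functor $\Phi_{(n,\cI_n^\alpha)}$ is an equivalence. Since $\radice[n]{X}$, $(X_n)_{\cI_n^\alpha}$ and $Y_{(n,\cI_n^\alpha)}$ are honest algebraic stacks and $\Db{-}$ satisfies faithfully flat descent (Section \ref{sec:dg.categories}), this may be checked after a strict \'etale base change: by the flat strict base change arguments already used in the proof of Proposition \ref{prop:FM.compatible} (under which the relevant fibered products are underived), $\Phi_{(n,\cI_n^\alpha)}$ is a morphism between the descent (limit) diagrams attached to the \v{C}ech nerve of a cover, hence it is an equivalence as soon as it is so on each term of the nerve. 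Every such term again admits a strict flat chart to some $\bA^s$, so one reduces first to $X$ strict and flat over $\bA^s$ and then, by flat base change along $X\to \bA^s$ (via Lemma \ref{lem:fibered.prod} and the compatibility of Fourier--Mukai functors with flat base change), to the case $X=\bA^s$ with its toric log structure. In that case $\radice[n]{X}=\radice[n]{\bA^s}=[\bA^s/\mu_n^s]$, the stack $X_n=\bA^s\times_{\bA^1}\radice[n]{\bA^1}$ has abelian quotient singularities, $(X_n)_{\cI_n^\alpha}$ is a smooth toric DM stack, and $Y_{(n,\cI_n^\alpha)}=\radice[n]{\bA^s}\times_{X_n}(X_n)_{\cI_n^\alpha}$ is an ordinary fiber product; Kawamata's toric derived McKay correspondence for abelian quotient singularities \cite{Ka} (building on \cite{BKR} and \cite{kapranov-vasserot}) then gives that the Fourier--Mukai functor with kernel $\cO_{Y_{(n,\cI_n^\alpha)}}$, which is exactly $\Phi_{(n,\cI_n^\alpha)}$, is an equivalence $\Db{(X_n)_{\cI_n^\alpha}}\simeq\Db{\radice[n]{\bA^s}}$. (This also verifies a posteriori that $\Phi_{(n,\cI_n^\alpha)}$ preserves perfect complexes.) Assembling the colimit over $J$ yields Theorem \ref{thm:main}.

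I expect the main obstacle to lie in the last step, the local computation over $\bA^s$: one must identify our geometrically defined kernel $\cO_{Y_{(n,\cI_n^\alpha)}}$ --- the structure sheaf of a \emph{naive} fiber product of toric stacks --- with the kernel appearing in Kawamata's theorem. This requires, on one side, checking that this fiber product agrees with the derived one (this is where log flatness enters, via Lemma \ref{lem:fibered.prod}), and, on the other side, a careful translation in terms of stacky fans identifying $\radice[n]{\bA^s}$ and the toric resolutions $(X_n)_{\cI_n^\alpha}$ as the two sides of the toric McKay correspondence and matching the discrepancy data. A secondary technical point is the \'etale-descent step: one must know that ``being an equivalence'' descends for these Fourier--Mukai functors, which hinges on the terms of the \v{C}ech nerve of a strict \'etale cover still carrying strict flat charts to the affine model, together with flat base change for the $\Phi_{(n,\cI_n^\alpha)}$.
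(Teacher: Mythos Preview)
Your overall architecture matches the paper's: pass to a cofinal subsystem of $I$ on which each $\Phi_{(n,\cI_n^\alpha)}$ is an equivalence, reduce to the local model $\bA^s$ by flat base change (exactly as in the paper's Lemmas \ref{lem:fibered.prod}--\ref{prop:fm.local}), and then invoke Kawamata's toric McKay correspondence (the paper's Proposition \ref{prop:mckay}). The local step and the descent/base-change reductions you sketch are essentially what the paper does.

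The gap is in your construction of the cofinal set $J$. You take $J$ to consist of all $(n,\cI_n^\alpha)$ with $(X_n)_{\cI_n^\alpha}$ smooth, and justify cofinality by saying that ``compatibility of log blow-ups with strict base change lets one patch the local resolutions into a globally coherent $\cI$.'' But strict base change only transports a \emph{global} log blow-up to a local one, not the other way around: if you choose smooth toric subdivisions chart by chart, there is no reason for them to agree on overlaps, because the transition maps of the log structure can permute the generators of $\bN^{r+1}$ nontrivially (the charts have monodromy). This is precisely the obstruction the paper works to overcome: it builds, for each $n$, a \emph{symmetric} unimodular triangulation $\cT_{k_n}$ of $k_n\Delta^R$ invariant under $S_{R+1}$ and compatible with restriction to faces (Lemmas \ref{lem:sym} and \ref{lem:sym2}), and then uses Artin fans (Section \ref{sec:globalizing}) to assemble these symmetric local subdivisions into a single global log blow-up $(X_{k_n})_{\cT_{k_n}}$. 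Without that symmetry input (or an appeal to a functorial toroidal resolution theorem, which you do not make), your cofinality claim is unjustified, and the argument does not close.
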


The proof of the theorem will consist of a reduction to an analogous ``finite-index'' statement for a cofinal subset of the inverse systems $\{\radice[n]{X}\}_{(n,\cI_n^\alpha)\in I}$ and $\{(X_n)_{\cI_n^\alpha}\}_{(n,\cI_n^\alpha)\in I}$. We will deduce this finite-index version from the local case of $\bA^s$ equipped with the standard degeneration $ \bA^{s}\to \bA^1$, given by $(x_1,\hdots, x_{s})\mapsto x_1\cdots x_s$, of $\bG_m^{s-1}$ to the union of the $s$ coordinate hyperplanes in $\bA^{s}$. In this local case, as explained in the introduction, our result will be a consequence of the derived McKay correspondence, proven for abelian quotient singularities by Kawamata \cite[Theorem 4.2]{Ka}. Discussing the local case first is helpful in order to prepare for the construction of the cofinal system for the blow-ups $(X_n)_{\cI_n^\alpha}$ in the global case, so we start with that part in the next section, and treat the globalization process in Section \ref{sec:globalizing}. 

{
 {However, before proceeding with the proof, we show that Theorem \ref{thm:main} implies that {the derived} category of parabolic sheaves on a smooth variety equipped with a normal crossings divisor is invariant under log blow-ups. This is one of our main applications.} 
}

\subsection*{{Parabolic sheaves on smooth varieties with normal crossings boundary}}
Theorem \ref{thm:main} can be applied in particular to log schemes of the form $X=(Y,D)$ where $Y$ is a smooth variety over $k$ equipped with a normal crossings divisor $D$, for which it implies that there is an equivalence of dg-categories $\Db{(Y,D)_\infty^\val} \simeq \Db{\irs{(Y,D)}}$. From this we obtain the following consequence.

\begin{proposition}\label{cor:parabolic}
Let $(Y,D)$ be the log scheme given by a smooth variety $Y$ over $k$ equipped with a normal crossings divisor $D$, and let $(Y',D')\to (Y,D)$ be a {log blow-up, such that $(Y',D')$ is again a smooth variety with a normal crossings divisor.} Then there is an equivalence of dg-categories
$$
D^b_{dg}(\Par(Y,D))\simeq D^b_{dg}(\Par(Y',D'))
$$
between the bounded derived categories of {coherent} parabolic sheaves with rational weights.
\end{proposition}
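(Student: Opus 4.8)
The plan is to deduce Proposition \ref{cor:parabolic} from Theorem \ref{thm:main}, using on one side the identification of $\Db{\radice[\infty]{(Y,D)}}$ with the derived category of coherent parabolic sheaves, and on the other side the fact that the reduced valuativization is insensitive to log blow-ups. First I would record that for a smooth variety $Y$ with a normal crossings divisor $D$ one has $\Db{\radice[\infty]{(Y,D)}}\simeq D^b_{dg}(\Par(Y,D))$. Indeed, each finite root stack $\radice[n]{(Y,D)}$ is then a smooth Deligne--Mumford stack of finite type over $k$, so $\Db{\radice[n]{(Y,D)}}=D^b_{dg}(\Coh(\radice[n]{(Y,D)}))$; passing to the colimit over $n$ and invoking Proposition \ref{prop.db.irs} (together with the subsequent remark, which identifies $D^b_{dg}(\Coh(\radice[\infty]{(Y,D)}))$ with $D^b_{dg}(\Par(Y,D))$ via \cite[Theorem 7.3]{TV}) gives $\Db{\radice[\infty]{(Y,D)}}=\varinjlim_n\Db{\radice[n]{(Y,D)}}\simeq D^b_{dg}(\Par(Y,D))$. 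The same holds verbatim for $(Y',D')$.

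Next, both $(Y,D)$ and $(Y',D')$ are smooth varieties with normal crossings divisors, hence satisfy the hypotheses of Theorem \ref{thm:main} (they are quasi-compact, fine saturated, log flat over $k$, with locally free log structure and trivial stabilizers). Applying Theorem \ref{thm:main} to each therefore yields equivalences $\Db{(Y,D)_\infty^{\val}}\simeq\Db{\radice[\infty]{(Y,D)}}\simeq D^b_{dg}(\Par(Y,D))$ and $\Db{(Y',D')_\infty^{\val}}\simeq\Db{\radice[\infty]{(Y',D')}}\simeq D^b_{dg}(\Par(Y',D'))$. So the proposition follows once we produce an equivalence $\Db{(Y,D)_\infty^{\val}}\simeq\Db{(Y',D')_\infty^{\val}}$ between the two reduced valuativization sides.

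For this I would argue that both reduced valuativizations are canonically identified with the valuativization of the infinite root stack, which is in turn invariant under log blow-ups. Writing $X$ for either $(Y,D)$ or $(Y',D')$: by Proposition \ref{prop:diagonal.valuativization} applied to the auxiliary log scheme $X'$ of Definition \ref{def:xn} one has $X_\infty^{\val}=\irs{X'}^{\val}$; since the ghost sheaf of an infinite root stack is uniquely divisible and log blow-ups preserve this, $X_\infty^{\val}$ is ``infinitely rooted'', and together with the canonical isomorphism $\irs{X_\infty}\simeq\irs{X}$ and the universal properties of the infinite root stack and of the valuativization this identifies $X_\infty^{\val}\simeq\irs{X}^{\val}$. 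Finally, writing the log blow-up $(Y',D')\to(Y,D)$ as a blow-up along a coherent sheaf of ideals $\cI_0\subseteq\overline{M}_{(Y,D)}$, the induced map $\irs{(Y',D')}^{\val}\to\irs{(Y,D)}^{\val}$ is an equivalence: being valuative, $\irs{(Y',D')}^{\val}$ maps to $\irs{(Y,D)}$ and hence factors through $\irs{(Y,D)}^{\val}$, while on the valuative log stack $\irs{(Y,D)}^{\val}$ every coherent sheaf of ideals of the log structure is invertible by Lemma \ref{lem:valuative}, so the universal property of the log blow-up, combined with infinite-rootedness and valuativity, produces a map $\irs{(Y,D)}^{\val}\to\irs{(Y',D')}^{\val}$ inverse to the first. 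Chaining all the equivalences gives $D^b_{dg}(\Par(Y,D))\simeq D^b_{dg}(\Par(Y',D'))$. I expect the main difficulty to lie in this last paragraph — specifically in verifying that $X_\infty^{\val}$ is genuinely ``infinitely rooted'' and in making precise the chain of universal-property identifications that relate the reduced valuativization of Definition \ref{def:xn} (built from root stacks of the auxiliary log structure followed by log blow-ups) to the valuativization $\irs{X}^{\val}$ of the infinite root stack.
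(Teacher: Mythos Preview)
Your first two paragraphs are fine and match the paper's strategy: identify $\Db{\radice[\infty]{(Y,D)}}$ with $D^b_{dg}(\Par(Y,D))$ via smoothness of the finite root stacks and Proposition \ref{prop.db.irs}, and apply Theorem \ref{thm:main} to get $\Db{(Y,D)_\infty^{\val}}\simeq D^b_{dg}(\Par(Y,D))$. The problem is entirely in your third paragraph, and it is a genuine gap rather than a technicality.

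The claim that $X_\infty^{\val}$ is ``infinitely rooted'' is false. Recall how $X_n$ is built: it is $\radice[n]{X'}$, where $X'$ carries the rank-one auxiliary log structure given by the diagonal $\bN\to\bN^{r+1}$. In the local model the ghost sheaf of $X_n$ at the deepest point is the fine saturated pushout $\bN^{r+1}\sqcup_\bN\frac{1}{n}\bN$, i.e.\ the submonoid of $\bQ^{r+1}$ generated by $e_1,\ldots,e_{r+1}$ together with $\frac{1}{n}(1,\ldots,1)$. Passing to the limit over $n$ adjoins all rational multiples of the diagonal vector but nothing else: the element $e_1$ still has no square root. Valuativizing does not create roots either. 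Hence $X_\infty^{\val}$ does not satisfy the universal property of $\irs{X}$, and your identification $X_\infty^{\val}\simeq\irs{X}^{\val}$ breaks down. Since Theorem \ref{thm:main} produces $\Db{X_\infty^{\val}}$ and not $\Db{\irs{X}^{\val}}$, going through $\irs{X}^{\val}$ does not salvage the argument.

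There is a second, related obstruction that the paper isolates explicitly. You want to compare $(Y,D)_\infty^{\val}$ and $(Y',D')_\infty^{\val}$, but these are built from \emph{different} auxiliary log structures: the one on $(Y',D')$ corresponds to $\cO_{Y'}(D')$, whereas the pullback of the one on $(Y,D)$ corresponds to $\cO_{Y'}(\pi^*D)$, and the exceptional divisor appears in $\pi^*D$ with multiplicity $r>1$. Consequently there is in general no map $(Y',D')_n\to(Y,D)_n$ and no direct cofinality statement between the two systems of log blow-ups. The paper fixes this by first reducing (via the toroidal weak factorization theorem) to a single blow-up along a smooth stratum where $r$ branches meet, then passing to the $r$-th root $(Y,D)_r$ on the base and forming the fine saturated fibered product $Z=(Y,D)_r\times_{(Y,D)}(Y',D')$. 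For this $Z$ one checks two things: $Z_\infty^{\val}\simeq (Y,D)_\infty^{\val}$ by a genuine cofinality argument (the log blow-ups of the $Z_n$ sit inside those of the $(Y,D)_{rn}$), and $\irs{Z}\simeq\irs{(Y',D')}$ because $Z\to(Y',D')$ is an $r$-th root stack along the strict transforms (verified by a toric computation). Theorem \ref{thm:main} is then applied to $(Y,D)$ and to $Z$, not to $(Y',D')$. This intermediate object $Z$ is the missing idea in your sketch.
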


This fits well with the philosophy that in the ``divisorial'' case, some objects defined in terms of the log structure should actually only depend on the open part (i.e. the complement of the boundary). 

{Before proceeding with the proof, let us make one preliminary observation. Recall that a normal crossings divisor $D \subset Y$  comes with a canonical smooth stratification. One way to see this is to consider the normalization map 
$n: \widetilde{D} \to D.$ The locally closed subschemes of $D$ where the cardinality of the fibers of $n$ stays constant give rise to a stratification of $D.$  From now on  we will simply call \emph{strata} the  closed smooth strata on the normal crossings divisor $D,$ viewed as closed subschemes of $Y$.}

{Now, let $(Y', D') \to (Y, D)$ be a log blow-up satisfying the assumptions of the theorem. By the factorization theorem for toroidal birational 
maps between smooth varieties (see Theorem B of \cite{wlod}, and Theorem 8.3 of \cite{abrammats}),  we can factor this log blow-up as a finite sequence of blow-ups and blow-downs with centers a single smooth closed stratum. Thus, in the proof of Proposition \ref{cor:parabolic} we can, and will, assume without loss of generality that $(Y', D') \to (Y, D)$ is a blow-up centered on a single smooth closed stratum.}

\begin{proof}
{We break down the proof in three steps.}

{{\bf First step.}}
Consider the morphism $(Y,D)\to [\bA^1/\bG_m]$ induced by the divisor $D$, described in Remark \ref{rmk:a1gm}. The stacks $(Y,D)_n$ in this case are obtained by pulling back along the maps $ \phi_n\colon [\bA^1/\bG_m]\to  [\bA^1/\bG_m]$, induced by raising to the $n$-th power both on $\bA^1$ and on $\bG_m$.

Let $r$ be the number of branches of $D$ meeting in the stratum that we are blowing up (including branches belonging to the same irreducible component). Consider the composite $$f\colon (Y',D')\to (Y,D)\to  [\bA^1/\bG_m].$$ We would like to say that the log blow-ups of the stacks $(Y',D')_n$ will be cofinal in the log blow-ups of the stacks $(Y,D)_n$, so that $(Y,D)_\infty^\val\simeq (Y',D')_\infty^\val$ (note that a composition of log blow-ups is again a log blow-up \cite[Corollary 4.11]{niziol}). As easy examples show though, the map $f$ is not exactly the morphism $(Y',D')\to  [\bA^1/\bG_m]$ that corresponds to the sum of the components of the divisor $D'$, because the exceptional divisor $E$ will have multiplicity bigger than $1$ if we take the pullback of $D$ to $Y'$. Consequently, it is not clear that there will be induced morphisms $(Y',D')_n\to (Y,D)_n$.

In order to fix this, let us extract $r$-th roots on the base $[\bA^1/\bG_m]$: consider the pullback $(Y,D)_r\to (Y,D)$ along the map $\phi_r$, and take the fibered product $Z=(Y,D)_r\times_{(Y,D)} (Y',D')$ in the category of  fine  saturated log algebraic stacks. Note that $Z$ is a stack that satisfies the assumptions of Theorem \ref{thm:main}, and the projection $Z\to (Y,D)_r$ is a log blow-up (by \cite[Corollary 4.8]{niziol}). Moreover, pullback along the composite $Z\to  (Y,D)_r\to [\bA^1/\bG_m]$ equips $Z$ (or rather, its underlying algebraic stack) with the ``correct'' log structure, whose root stacks are the stacks $Z_n$  {that we introduced in Definition \ref{def:xn}}.  Hence, log blow-ups of the stacks $\{Z_n\}_{n\in \bN}$ are also log blow-ups of the stacks $\{(Y,D)_{rn}\}_{n\in \bN}$, and it is clear that they are cofinal in this direct system. Hence, we have an isomorphism $Z_\infty^\val\simeq (Y,D)_\infty^\val$.

{{\bf Second step.} Next we prove that there is a natural isomorphism $\irs{Z}\simeq \irs{(Y',D')}.$ This follows from the fact that $Z\to (Y',D')$ is an $r$-th root stack morphism, with respect to the strict transform of $D$ on $Y'$ (i.e. all components of the divisor $D'$, except the exceptional divisor $E$), where the roots are taken separately along each component.}
Let us prove this claim: it suffices to work locally, around a point $y\in D\subseteq Y$, that is moreover in the stratum that we are blowing up. Around $y$ there will be a chart $(Y,D)\to \bA^r$ sending $y$ to the origin. Since the formation of log blow-ups and root stacks are compatible with strict base-change, we can verify the claim on the universal model, where we blow up the origin in $\bA^r$.

Let $B\to \bA^r$ be the blow-up of $\bA^r$ at the origin. Using toric language, this can be described as the toric variety associated with the fan $\Sigma$, given by the subdivision of the first orthant in $\bR^r$, where we add the ray $\rho$ generated by the vector $(1,\hdots, 1) \in \bZ^r$. The stack $\bA^r_r$ can be seen in this case as the fibered product $\radice[r]{\bA^1}\times_{\bA^1}\bA^r$, where $\bA^r\to \bA^1$ is the map $(x_1,\hdots, x_r)\mapsto x_1\cdots x_r$. This can be described as a toric stack (whose definition has been recalled in Section \ref{sec:toric.stacks}) by considering the first orthant $\bR^r_{\geq 0}$ as a fan in the lattice $$L_r=\{(a_1,\hdots, a_{r})\in \bZ^{r} \mid a_1+\cdots +a_{r}\equiv 0\; (\mathrm{mod} \; r)\}$$ equipped with the inclusion of lattices $L_r\subseteq \bZ^r$ (this toric picture will come up again in Section \ref{sec:local}). The log blow-up $Z$ of $\bA^r_r$ given by the fine saturated fibered product $\bA^r_r\times_{\bA^r} B$ is described, as a toric stack, by subdiving this fan, using again the ray $\rho$ as above.

Now the claim becomes that this log blow-up $Z$ coincides with the root stack of $B$ along the strict transforms of the coordinate hyperplanes of $\bA^r$. This is clear by using the toric picture: an affine chart of the blow-up $B$ is given by the smooth cone with rays consisting of all the positive coordinates axes except one, and the ray $\rho$. Extracting an $r$-th root of the divisors corresponding to the coordinate hyperplanes corresponds to looking at these cones in a sublattice of $\bZ^{r}$, where for each $i=1,\hdots, r$ (except one index that is omitted) we replace the basis element ${e_i}$ by the vector $r\cdot e_i$ (as in Example \ref{ex:toric.root}), and we do not change anything for the last basis element, the primitive generator of $\rho$. These cones reassemble to give a stacky fan in the lattice $L_r$, which is exactly the one of the log blow-up $Z$, and it also clear that the map $Z\to B$ is the projection from the root stack. This proves the claim.

{{\bf Third step.}} Theorem \ref{thm:main} applied to $(Y,D)$ and to $Z$ yields equivalences of dg-categories $$\Db{\irs{(Y,D)}}\simeq \Db{(Y,D)_\infty^\val}, \quad \Db{\irs{Z}}\simeq \Db{Z_\infty^\val},
$$
that, together with the isomorphisms $Z_\infty^\val\simeq (Y,D)_\infty^\val$ and $\irs{Z}\simeq \irs{(Y',D')}$, induce an equivalence of dg-categories $$\Db{\irs{(Y,D)}}\simeq \Db{\irs{(Y',D')}}.$$
To conclude the proof, note that since the root stacks are smooth and noetherian in this case, we have compatible equivalences 
$\Db{\radice[n]{(Y,D)}} \simeq D^b_{dg}(\radice[n]{(Y,D)})$ (by \cite[Corollary 3.0.5]{BNP}) that induce an equivalence $\Db{\radice[\infty]{(Y,D)}} \simeq D^b_{dg}(\radice[\infty]{(Y,D)})$
(and the same is true for $(Y',D')$). Finally, we obtain an equivalence of dg-categories $$D^b_{dg}(\Par(Y,D))\simeq D^b_{dg}(\Par(Y',D')),$$ since by \cite[Theorem 7.3]{TV} there is an equivalence of abelian categories {$\Coh(\irs{(Y,D)})\simeq \Par(Y,D)$}. 
\end{proof}

\begin{remark}
We point out that the abelian categories of coherent parabolic sheaves over $(Y,D)$ and $(Y',D')$ {are most likely not equivalent}, and the fact that their bounded derived categories are equivalent is not at all clear without making use of our construction. It would be interesting to find an alternative and more explicit way to write down this equivalence, by using the description of parabolic sheaves with rational weights as systems of sheaves indexed by $\overline{M}^\gp_\bQ$ (see \cite{borne-vistoli, TV}).
\end{remark}

{Proposition \ref{cor:parabolic} has immediate K-theoretic consequences, 
that we briefly explain next. Let $(Y, D)$ be as in the statement of Proposition \ref{cor:parabolic}. We denote by $(Y, D)_{\mathrm{kfl}}$ the Kummer-flat site of 
$(Y, D)$  and by  $K((Y, D)_{\mathrm{kfl}})$ the algebraic K-theory spectrum of the Kummer-flat topos of $(Y, D).$  Recall that, following the seminal work of Thomason-Trobaugh \cite{TTro}, the algebraic K-theory of a ringed topos is defined as the  K-theory of its  category of perfect complexes. We refer the reader to \cite{Ni1} for more background on  K-theory in the logarithmic setting.} 

\begin{corollary}
\label{cor:kth}
{Let $(Y,D)$ be the log scheme given by a smooth variety $Y$ over $k$ equipped with a normal crossings divisor $D$, and let $(Y',D')\to (Y,D)$ be a {log blow-up, such that $(Y',D')$ is again a smooth variety with a normal crossings divisor.} Then there is an equivalence of spectra 
$$
K((Y, D)_{\mathrm{kfl}}) \simeq K((Y', D')_{\mathrm{kfl}}).$$}
\end{corollary}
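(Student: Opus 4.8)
The plan is to reduce this to Proposition \ref{cor:parabolic}. The idea is that the algebraic K-theory of the Kummer-flat topos of $(Y,D)$ depends only on the dg-category of perfect complexes of $\cO$-modules on that topos (this is how it is defined, following Thomason--Trobaugh \cite{TTro}), and that this dg-category can be identified with $\Db{\irs{(Y,D)}}$. Granting this identification, Proposition \ref{cor:parabolic} supplies the needed equivalence.

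First I would establish an equivalence of dg-categories
$$
\Perf((Y,D)_{\mathrm{kfl}}) \simeq \Db{\irs{(Y,D)}}.
$$
For this I would use that the finite root stacks $\{\radice[n]{(Y,D)}\}_{n\in\bN}$ form a cofinal system of Kummer covers of $(Y,D)$ --- in characteristic zero these are in fact Kummer-\'etale covers, and the Kummer-flat and Kummer-\'etale topoi coincide --- so that the Kummer-flat topos of $(Y,D)$ is the limit of the topoi of the small sites of the $\radice[n]{(Y,D)}$. This is the flat counterpart of the identification of the Kummer-\'etale topos with the small \'etale topos of $\irs{(Y,D)}$ recalled in Remark \ref{rmk:niziol} and proved in \cite[Section 6.2]{TV}. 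Passing to perfect complexes of $\cO$-modules turns this limit of ringed topoi into the colimit $\varinjlim_n \Db{\radice[n]{(Y,D)}}$, which equals $\Db{\irs{(Y,D)}}$ by Definition \ref{properf} (and which, by Proposition \ref{prop.db.irs}, is genuinely the category of perfect complexes on the stack $\irs{(Y,D)}$). The same discussion applies verbatim to $(Y',D')$.

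The second step is then purely formal. By Proposition \ref{cor:parabolic}, or more precisely by the equivalence $\Db{\irs{(Y,D)}}\simeq \Db{\irs{(Y',D')}}$ constructed in its proof, the two dg-categories above are equivalent; and since algebraic K-theory is a functor from dg-categories of perfect complexes to spectra that carries equivalences to equivalences, applying it to the chain
$$
\Perf((Y,D)_{\mathrm{kfl}}) \simeq \Db{\irs{(Y,D)}} \simeq \Db{\irs{(Y',D')}} \simeq \Perf((Y',D')_{\mathrm{kfl}})
$$
produces the asserted equivalence of spectra.

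The step I expect to be the main obstacle is the first one: while the relation between root stacks and Kummer covers is well understood at the level of (torsion) cohomology, one has to check that it persists at the level of ringed topoi and of perfect complexes of $\cO$-modules, so that $\Perf((Y,D)_{\mathrm{kfl}})$ is indeed the colimit $\varinjlim_n \Db{\radice[n]{(Y,D)}}$. I expect this to follow by combining the analysis of the small Kummer sites of infinite root stacks in \cite[Section 6]{TV} with Nizio\l{}'s treatment of K-theory in the logarithmic setting in \cite{Ni1}; once it is available, the rest of the argument is automatic.
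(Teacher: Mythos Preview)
Your proposal is correct and follows essentially the same strategy as the paper: reduce to the equivalence $\Db{\irs{(Y,D)}}\simeq\Db{\irs{(Y',D')}}$ from Proposition~\ref{cor:parabolic}, identify the K-theory of the Kummer-flat topos with the K-theory of perfect complexes on the infinite root stack, and apply the K-theory functor. The only difference is in how that identification is made: the paper invokes directly \cite[Theorem~6.15]{TV}, which gives an equivalence between the Kummer-flat topos of $(Y,D)$ and the small fppf topos of $\irs{(Y,D)}$, so that the K-theories agree without any further colimit argument; your route via cofinality of finite root stacks and a colimit of categories of perfect complexes is more roundabout (and your aside that the Kummer-flat and Kummer-\'etale topoi coincide in characteristic zero is not quite the right formulation---the paper works with the fppf site of $\irs{(Y,D)}$ instead), but it leads to the same conclusion once one unpacks the results of \cite[Section~6]{TV}.
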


It would be interesting to understand the effect of this equivalence using the explicit description of the K-theory given in \cite{sst}.

\begin{proof}{
Applying K-theory to the equivalence of dg-categories obtained in Proposition \ref{cor:parabolic}, we get an equivalence of spectra
$$
K(\Db{\radice[\infty]{(Y,D)}}) \simeq K(\Db{\radice[\infty]{(Y',D')}}). 
$$ 
In order to finish the proof, it is sufficient to observe that we have equivalences 
$$
K({(Y,D)}_\mathrm{kfl}) \simeq K(\Db{\radice[\infty]{(Y,D)}}), \quad 
K({(Y',D')}_\mathrm{kfl}) \simeq K(\Db{\radice[\infty]{(Y',D')}}). 
$$   
This is a consequence of the following two facts. 
First, note that the K-theory spectra 
$$K(\Db{\radice[\infty]{(Y,D)}}), \quad \text{ and } \quad 
K(\Db{\radice[\infty]{(Y',D')}})$$ coincide respectively with the K-theory of the small fppf topos of $\radice[\infty]{(Y,D)}$, and of $\radice[\infty]{(Y',D')}$  {(see \cite[Section 6]{TV} for a discussion about the small fppf topos of an infinite root stack)}.  Second, by Theorem 6.15 of \cite{TV},  the Kummer-flat topos of $(Y,D)$ is equivalent to the small fppf topos of  
$\radice[\infty]{(Y,D)},$ and the same is true for $\radice[\infty]{(Y',D')}.$}  
\end{proof}

%%%%%%%%%%%%%%%%%%%%%%%%%%%%%%%%%%%%%%%%%%%

\section{The local case} \label{sec:local}

In this section we will assume that $X=\bA^{r+1}$ for $r\geq 0$, and prove Theorem \ref{thm:main} in this case. For this, we will make extensive use of the map $\phi_r\colon \bA^{r+1}\to \bA^1$ defined by $\phi_r(x_1,\hdots, x_{r+1})=x_1\cdots x_{r+1}$, which gives the local picture of the semistable degenerations that we will consider in Section \ref{sec:simple.log.sss}.

Let us first follow through the general construction of the previous section, and describe the objects $\radice[\infty]{X}$ and $X_\infty^\val$ in this particular case. We will denote by $t$ the coordinate of the base $\bA^1$, so that $\phi_r$ is given in terms of algebras by the homomorphism $k[t]\to k[x_1,\hdots, x_{r+1}]$ sending $t$ to the product $x_1\cdots x_{r+1}$. From now we will abbreviate $k[x_1,\hdots,x_{r+1}]$ by $k[x_i]$.

\subsection{Root stacks} {Let us start by  describing explicitly  the main objects appearing in our constructions in the local case, on the root stack side: } 
\begin{itemize}
\item The root stacks of the base $\bA^1$ are given by quotient stacks $$\radice[n]{\bA^1}=[\Spec k[t_n]/\mu_n],$$ where $\Spec k[t_n]\to \radice[n]{\bA^1}\to \bA^1=\Spec k[t]$ corresponds to $t\mapsto t_n^n$ (so that morally $t_n\sim t^{\frac{1}{n}}$), and the action of $\mu_n$ is the obvious one. 

\item The root stack $\bA^{r+1}_n=\radice[n]{(\bA^{r+1})'}$ of ${\bA^{r+1}}$ with respect to the log structure pulled back from $\bA^1$ is isomorphic to the fibered product $\bA^{r+1}\times_{\bA^1}\radice[n]{\bA^1}$, for which we have
$$\bA^{r+1}_n=\radice[n]{\bA^1}\times_{\bA^1}\bA^{r+1}\simeq [\Spec k[x_i, t_n]/(x_1\cdots x_{r+1} - t_n^n) \, / \, \mu_n]$$ where $\mu_n$ acts on $t_n$ as for $\radice[n]{\bA^1}$.

\item The root stack $\radice[n]{\bA^{r+1}}$ can be described as the quotient
$$
\radice[n]{\bA^{r+1}}=[\Spec k[x_{i,n}] \, / \, \mu_n^{r+1}]
$$
for new variables $x_{1,n},\hdots, x_{r+1,n}$, where $\mu_n^{r+1}$ acts diagonally, in the obvious way, and the composite $\Spec k[x_{i,n}]\to \radice[n]{\bA^{r+1}}\to \bA^{r+1}=\Spec k[x_i]$ is given algebraically by $x_i\mapsto x_{i,n}^n$ (so that morally $x_{i,n} \sim x_i^{\frac{1}{n}}$).
\end{itemize}
\begin{remark}\label{rmk:toric}
{Note that} everything in sight is either a toric variety or a toric stack. %We refer the reader to \cite{gsa} for  the basics about toric stacks.

The toric stack 
$\bA^{r+1}_n$ can be described via another global quotient. The action of $\mu_n^{r+1}$ on $\bA^{r+1}=\Spec k[x_{i,n}]$  induces an action of the kernel  of the morphism $$\mu_n^{r+1}\to \mu_n, \quad (\xi_1, \ldots, \xi_{r+1}) \mapsto \xi_1 \cdot \ldots \cdot \xi_{r+1},$$  which we identify with $\mu_n^r$ via the map $(\eta_1,\hdots, \eta_r)\mapsto (\eta_1,\hdots, \eta_r, (\eta_1\cdots \eta_r)^{-1})$.

The quotient $\bA^{r+1}/\mu_n^r$  has an action of {the target} $\mu_n$, and we have a canonical isomorphism $\bA^{r+1}_n\simeq [( \bA^{r+1}/\mu_n^r) \, /\, \mu_n]$. In fact the quotient  $\bA^{r+1}/\mu_n^r$ is exactly $\Spec k[x_i, t_n]/(x_1\cdots x_{r+1} - t_n^n)$.

{The theory of stacky fans (recalled in Section \ref{sec:toric.stacks}) allows to encode the difference between the toric variety $\bA^{r+1}/\mu_n^r$ and the toric stack $\bA^{r+1}_n$ in a combinatorial way. Let $\cF_n$ be the fan of $\bA^{r+1}/\mu_n^r$ in a lattice $L_n$ (see below for more details about these). Then the stacky quotient $\bA^{r+1}_n$ is described by the same fan in the same lattice, with the difference that we are also keeping track of an inclusion of lattices $L_n\subseteq \bZ^{r+1}$ (denoted by $N\to N'$ in Section \ref{sec:toric.stacks}).%, or equivalently an inclusion $(\bZ^{r+1})^\vee\to L_n^\vee$.
}
The kernel of the morphism between the Cartier duals $\bG_m^{r+1}=D(L_n^\vee)\to D((\bZ^{r+1})^\vee)=\bG_m^{r+1}$ is exactly the copy of $\mu_n$ appearing in the quotient stack description of $\bA^{r+1}_n$.
\end{remark}

\begin{example}\label{rmk:r=1}
The simplest case beyond the trivial one is the case $r=1$, where the degeneration $\bA^2\to \bA^1$ mapping $(x,y)$ to $xy$ is the universal smoothing of a node in a curve, see also Example \ref{examp:a2} from the {i}ntroduction. 

In this case the fibered product $\bA^2_n=\bA^2\times_{\bA^1}\radice[n]{\bA^1}$ is the quotient $[(\bA^2/\mu_n)\,  / \, \mu_n]$, where $\bA^2/\mu_n$ is the quotient by the action $\xi\cdot (x_n,y_n)=(\xi x_n,\xi^{-1} y_n)$, which is the affine scheme $\Spec k[x,y,t_n]/(xy-t_n^n)$. This is the standard form of the $A_{n-1}$ surface singularity, and the root stack morphism $\radice[n]{\bA^2}\to \bA^2_n$ pulls back to the ``stacky resolution'' $[\bA^2/\mu_n]\to \bA^2/\mu_n$. The construction of the blow-up side that follows will, in this particular case, reduce to considering the crepant resolution of singularities of $\bA^2/\mu_n$.
\end{example}

\subsection{Log blow-ups} 
{Next we turn to the other side of our equivalence, which is given by  the valuativization.} Let us first spend some words on log blow-ups and the valuativization of $X=\bA^{r+1}$ itself. As for any toric variety, the valuativization $(\bA^{r+1})^\val$ is the inverse limit of all toric blow-ups (see Remark \ref{rmk:toric.blow.up}). These are conveniently described by subdivisions of the fan $\cF$ associated to the toric variety, that in this case is the cone given by the first orthant $\cF=\bR_{\geq 0}^{r+1}$ of the vector space $\bR^{r+1}$, with lattice $\bZ^{r+1}$. So the valuativization is described as the inverse limit of the schemes that we get by subdividing the fan $\cF$ further and further. 

{Rather than in $\bA^{r+1}$ itself, we will be  mostly interested in the toric stacks obtained from $\bA^{r+1}$ by extracting roots on the base $\bA^1$. Pulling back the degeneration $\bA^{r+1}\to \bA^1$ along the $n$-th root stack morphism $\radice[n]{\bA^1}\to \bA^1$,
has the effect of replacing the lattice $\bZ^{r+1}$ by the inclusion $L_n\subseteq \bZ^{r+1}$ (the resulting object will be a toric stack), where $L_n$ is the lattice of vectors $(a_1,\hdots, a_{r+1})$ such that $a_1+\cdots +a_{r+1}\equiv 0\; (\mathrm{mod} \; n)$. This operation does not change the fan.} Let us denote by $\cF_n$ the fan given by the first orthant in $\bR^{r+1}$, but with lattice the subgroup $L_n$ described above. Because of this description, subdivisions of the original fan $\cF$ will induce subdivisions of the fan $\cF_n$.

{For our purposes it will be important to 
consider certain specific kinds of subdivisions of the fan 
$\cF_n,$ which we describe next.} Consider the hyperplane given by the equation $
x_1+\cdots+x_{r+1}=n
$  in $\bR^{r+1}$, and the standard simplex that the first orthant determines on it. With respect to the lattice $L_n$, this simplex is isomorphic to the standard simplex $n\Delta^r$ of dimension $r$, but whose sides have $n+1$ lattice points each. {We can subdivide the fan $\cF_n$ by choosing  a unimodular (i.e. such that every lattice point is contained in a face, or equivalently, every lattice point in the interior of $n\Delta^r$ is a vertex) triangulation $\cT$ of the simplex $n\Delta^r$, and construct the induced fan in the space $\bR^{r+1}$ with lattice $L_n$, by taking the cones over the simplices in the triangulation $\cT$.} 

\begin{remark}
The operation of constructing a fan by taking cones over some polyhedra sitting in a hyperplane should be familiar from the theory of   toric singularities \cite{altmann1997versal}. It also plays a key role in 
Hori-Vafa mirror symmetry of toric LG models, we refer the reader to Section 3 of  \cite{pascaleff2016topological} for a discussion of these aspects.
\end{remark}

Denote by $(\bA^{r+1}_n)_\cT$ the {toric} blow-up of $\bA^{r+1}_n$ corresponding to this fan (we trust that there will be no confusion with the notation for a log blow-up along a sheaf of ideals of the log structure). Note that the stack $\bA^{r+1}_n\simeq  [( \bA^{r+1}/\mu_n^r) \, /\, \mu_n]$ is a global quotient of a toric variety $( \bA^{r+1}/\mu_n^r)$ (as mentioned in Remark \ref{rmk:toric}), whose fan coincides with $\cF_n$, and this gives a canonical way to define the log blow-up on an atlas, as described in (\ref{sec:valuativization}).

\begin{remark}\label{rmk:crepant}
We point out that the log blow-up $(\bA^{r+1}/\mu_n^r)_\cT\to ( \bA^{r+1}/\mu_n^r)$ is a crepant resolution of singularities for any unimodular triangulation $\cT$ as above: since it is given by a subdivision of the fan, it is a proper birational morphism, the total space is smooth since every cone in its fan is smooth, and both varieties have trivial canonical divisor. We refer the reader to  \cite{altmann1997versal} for additional information. 
\end{remark}

{By extracting further roots 
and subdividing further, we obtain a {direct} system of triangulations}. More precisely, if $m\in \bN$ and $\cT$ is a unimodular triangulation of $n\Delta^r$, then $\cT$ will induce a (non-unimodular) triangulation of the simplex $(mn)\Delta^r$, that we can refine further to a unimodular one in several possible ways. If $\cT'$ is such a refinement, we write $(mn,\cT')\geq (n,\cT)$, and we will simply say that $\cT'$ is a \emph{refinement} of $\cT$ (see also Remark \ref{rmk:rational.triangulations} below). In this way the set of pairs $\{(n,\cT_n^\alpha)\}$ where $n\in \bN$ and $\cT_n^\alpha$ is a unimodular triangulation of $n \Delta^r$ becomes a filtered partially ordered set.

To each $(n,\cT_n^\alpha)$ we can associate  the log algebraic stack $(\bA^{r+1}_n)_{\cT_n^\alpha}$, and if $(mn,\cT_{mn}^\beta)\geq (n,\cT_n^\alpha)$ we have an induced morphism $(\bA^{r+1}_{mn})_{\cT_{mn}^\beta}\to (\bA^{r+1}_n)_{\cT_n^\alpha}$. Thus we have an inverse system of log algebraic stacks indexed by the pairs $(n,\cT_n^\alpha)$.

\begin{remark}\label{rmk:rational.triangulations}
The family of triangulations $\{\cT_n^\alpha\}_{(n,\alpha)}$ can also be thought of as the family of ``(finite) rational triangulations'' of the standard simplex $\Delta^r$ (with two lattice points on each side), such that:
\begin{itemize}
\item every vertex of the triangulation is a rational linear combination of the vertices of $\Delta^r$, and
\item if $n$ is the least common denominator of the ones appearing in these (reduced, i.e. where the denominator can not be simplified further) rational linear combinations, then the induced lattice triangulation of the simplex $n\Delta^r$ is unimodular.
\end{itemize}
Moreover we will have $\cT_{mn}^\beta\geq \cT_n^\alpha$ if and only if, as rational triangulations of $\Delta^r$, $\cT_{mn}^\beta$ is a refinement of $\cT_n^\alpha$.
\end{remark}

Let us consider the inverse limit of the system $\{(\bA^{r+1}_n)_{\cT_n^\alpha}\}_{(n,\cT_n^\alpha)}$.

\begin{proposition}
There is a canonical isomorphism of log fibered categories $$(\bA^{r+1}_\infty)^\val\simeq \varprojlim_{(n,\cT_n^\alpha)} (\bA^{r+1}_n)_{\cT_n^\alpha}.$$
\end{proposition}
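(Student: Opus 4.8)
The plan is to read the statement off from Proposition~\ref{prop:diagonal.valuativization}, which presents the valuativization of an infinite root stack as a limit of log blow-ups, together with the invariance of such a limit under passage to a cofinal subsystem (Remark~\ref{rmk:cofinal.subsystem}). Since $\bA^{r+1}_\infty=\irs{(\bA^{r+1})'}$ by Definition~\ref{def:xn}, that proposition identifies $(\bA^{r+1}_\infty)^\val$ with $\varprojlim_{(n,\cI_n^\alpha)\in I}(\bA^{r+1}_n)_{\cI_n^\alpha}$, where $I$ is the filtered poset of pairs $(n,\cI_n^\alpha)$ made of an integer $n$ and a coherent sheaf of ideals $\cI_n^\alpha$ of the log structure on $\bA^{r+1}_n=\radice[n]{(\bA^{r+1})'}$. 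It thus suffices to prove that the subsystem indexed by the pairs $(n,\cT_n^\alpha)$ — with $\cT_n^\alpha$ a unimodular triangulation of $n\Delta^r$ and $(\bA^{r+1}_n)_{\cT_n^\alpha}$ the associated toric blow-up — is cofinal in $\{(\bA^{r+1}_n)_{\cI_n^\alpha}\}_{(n,\cI_n^\alpha)\in I}$, with matching transition maps. Compatibility with the log structures is then automatic, as the log structure on an inverse limit of log fibered categories is defined independently of the presentation of the limit.

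First I would fix $n$ and reduce log blow-ups of $\bA^{r+1}_n$ to subdivisions of its fan. By Remark~\ref{rmk:toric}, $\bA^{r+1}_n$ is the quotient of the normal toric variety $\bA^{r+1}/\mu_n^r$ — whose fan is $\cF_n$, the first orthant in the lattice $L_n$ — by the residual $\mu_n$-action, which is through a torus. Hence every coherent sheaf of ideals of the log structure on $\bA^{r+1}_n$ descends from the toric presentation $\bA^{r+1}/\mu_n^r$ (this is exactly how log blow-ups of algebraic stacks are built from groupoid presentations), and by Remark~\ref{rmk:toric.blow.up} the toric blow-ups — those coming from subdivisions of $\cF_n$ — are cofinal among all log blow-ups of $\bA^{r+1}/\mu_n^r$, while conversely every subdivision of $\cF_n$ yields a log blow-up of $\bA^{r+1}_n$ (the same toric bookkeeping as in Example~\ref{ex:toric.root}). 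So, for each fixed $n$, the subdivisions of the cone $\bR_{\geq 0}^{r+1}$ taken in the lattice $L_n$ are cofinal among the $(\bA^{r+1}_n)_{\cI_n^\alpha}$.

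Next I would let $n$ range over $\bN$ ordered by divisibility. All the fans $\cF_n$ share the underlying cone $\bR_{\geq 0}^{r+1}$, so if $\Sigma$ is a subdivision of $\cF_n$, then — after enlarging $n$ to a multiple $n_1$ (divisible by the finitely many values of $x_1+\cdots+x_{r+1}$ on the primitive generators of the rays of $\Sigma$) so that every ray of $\Sigma$ meets the hyperplane $\{x_1+\cdots+x_{r+1}=n_1\}$ in an $L_{n_1}$-lattice point — the subdivision $\Sigma$ becomes a rational polyhedral subdivision of a copy of $n_1\Delta^r$. By the standard result on unimodular refinements of lattice polytope subdivisions after dilation, as used in semistable reduction (see \cite{KKMS}), some multiple $n_2$ of $n_1$ carries a unimodular triangulation $\cT$ of $n_2\Delta^r$ refining $\Sigma$, so that the fan of cones over $\cT$ dominates $\Sigma$ and $(n_2,\cT)\geq(n,\cI_n^\alpha)$ in the combined index poset. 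This I expect to be the crux of the argument: the remaining steps are bookkeeping, but here one genuinely needs both the common-height reduction and the nontrivial existence of unimodular triangulations.

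Finally I would check that the transition maps agree: when $m$ is a multiple of $n$ and $\cT_m^\beta$ refines $\cT_n^\alpha$, the morphism $(\bA^{r+1}_m)_{\cT_m^\beta}\to(\bA^{r+1}_n)_{\cT_n^\alpha}$ induced by the sublattice inclusion $L_m\subseteq L_n$ and the refinement of fans coincides with the one coming from the poset $I$, both being pinned down by the universal property of log blow-ups applied to the same combinatorial data — routine. Combining the two cofinality statements with Remark~\ref{rmk:cofinal.subsystem} then yields $\varprojlim_{(n,\cT_n^\alpha)}(\bA^{r+1}_n)_{\cT_n^\alpha}\simeq\varprojlim_{(n,\cI_n^\alpha)\in I}(\bA^{r+1}_n)_{\cI_n^\alpha}\simeq(\bA^{r+1}_\infty)^\val$, as claimed.
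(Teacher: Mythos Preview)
Your proposal is correct and follows essentially the same route as the paper's own proof: both reduce to Proposition~\ref{prop:diagonal.valuativization} and Remark~\ref{rmk:cofinal.subsystem}, pass to the toric atlas $\bA^{r+1}/\mu_n^r$ via Remark~\ref{rmk:toric.blow.up} to replace an arbitrary log blow-up by a subdivision of $\cF_n$, slice by the height-$n$ hyperplane, and then enlarge $n$ to make the induced decomposition of $n\Delta^r$ first lattice and then unimodularly triangulable. Your write-up is in fact slightly more careful than the paper's at the final step, in that you separate the two dilations (one to achieve lattice vertices, one to allow a unimodular refinement) and point to \cite{KKMS} for the latter; the paper compresses this into a single sentence.
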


\begin{proof}
For every $(n,\cT_n^\alpha)$ the stack $(\bA^{r+1}_n)_{\cT_n^\alpha}$ is a log blow-up of $\bA^{r+1}_n$, and the structure maps of the inverse system are the natural maps between log blow-ups. Thus to conclude we only have to check that these log blow-ups are cofinal in the inverse system of log blow-ups of the stacks $\bA^{r+1}_n$ (as in (\ref{sec:valuativization}), in particular Proposition \ref{prop:diagonal.valuativization}).

Let $\cY\to \bA^{r+1}_n$ be a log blow-up. The pullback to the toric atlas $\bA^{r+1}/\mu_n^r$ of $\bA^{r+1}_n$ (Remark \ref{rmk:toric}) is then a log blow-up as well, and is therefore (by Remark \ref{rmk:toric.blow.up}) dominated by some toric blow-up, that will be defined by some subdivision $\Sigma$ of the fan $\cF_n$ of $\bA^{r+1}/\mu_n^r$ (which, recall, is the first orthant in $\bR^{r+1}$ with lattice $L_n=\{(a_1,\hdots, a_{r+1})\in \bZ^{r+1} \mid a_1+\cdots +a_{r+1}\equiv 0\; (\mathrm{mod} \; n)\}$).  
Now consider the subdivision induced by $\Sigma$ on the simplex $n\Delta^r$, seen as the portion of the hyperplane $H=\{a_1+\cdots+a_{r+1}=n\}$ lying in the first orthant, by considering for each cone $\sigma\in \Sigma$ the intersection $\sigma\cap H$ (which is going to be a rational polytope contained in $n\Delta^r$). This might not be a lattice subdivision, but by increasing $n$ we can make it so, and we can then refine it to a unimodular triangulation $\cT_n^\alpha$. Therefore we obtain a canonical morphism $(\bA^{r+1}_n)_{\cT_n^\alpha} \to \cY$ factoring $(\bA^{r+1}_n)_{\cT_n^\alpha}\to \bA^{r+1}_n$. This shows that the log blow-ups $(\bA^{r+1}_n)_{\cT_n^\alpha}$ are cofinal in the system of log blow-ups of the stacks $\bA^{r+1}_n$, and concludes the proof.
\end{proof}

\begin{corollary}
The natural maps $(\bA^{r+1}_\infty)^\val\to (\bA^{r+1}_n)_{\cT_n^\alpha}$ induce an equivalence of dg-categories
$$
\varinjlim_{(n,\cT_n^\alpha)} \Db{(\bA^{r+1}_n)_{\cT_n^\alpha}}\simeq \Db{(\bA^{r+1}_\infty)^\val}
$$
and the same holds for any cofinal subsystem of the family $\{(\bA^{r+1}_n)_{\cT_n^\alpha}\}_{(n,\cT_n^\alpha)}$.\qed 
\end{corollary}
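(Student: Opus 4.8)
The plan is to deduce the statement formally from the preceding Proposition, Definition~\ref{properf}, and the invariance of filtered homotopy colimits under restriction to cofinal subdiagrams.

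First I would recall that, by Definition~\ref{properf} applied to the pro-object computing the valuativization (Proposition~\ref{prop:diagonal.valuativization}), one has by construction
$$
\Db{(\bA^{r+1}_\infty)^\val}=\varinjlim_{(n,\cI_n^\alpha)\in I}\Db{(\bA^{r+1}_n)_{\cI_n^\alpha}},
$$
where $I$ is the filtered index set of all pairs $(n,\cI_n^\alpha)$ with $\cI_n^\alpha$ a coherent sheaf of ideals of the log structure on $\bA^{r+1}_n$. The proof of the previous Proposition already establishes exactly what is needed at the level of index categories: the subset of pairs $(n,\cT_n^\alpha)$, with $\cT_n^\alpha$ a unimodular triangulation of $n\Delta^r$, is cofinal in $I$, since for every log blow-up $\cY\to\bA^{r+1}_n$ we exhibited a triangulation together with a factorization $(\bA^{r+1}_n)_{\cT_n^\alpha}\to\cY\to\bA^{r+1}_n$.

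Next I would invoke the standard fact that, for a filtered system of dg-categories $\{\cC_i\}_{i\in I}$ and a cofinal subcategory $J\subseteq I$, the canonical map $\varinjlim_{j\in J}\cC_j\to\varinjlim_{i\in I}\cC_i$ is an equivalence. This holds for homotopy colimits in the Morita model structure: under the localization to stable $k$-linear $\infty$-categories used in the proof of Theorem~\ref{colim}, a cofinal inclusion of filtered categories still induces an equivalence on colimits. Applying this to the cofinal inclusion $\{(n,\cT_n^\alpha)\}\hookrightarrow I$ gives the asserted equivalence $\varinjlim_{(n,\cT_n^\alpha)}\Db{(\bA^{r+1}_n)_{\cT_n^\alpha}}\simeq\Db{(\bA^{r+1}_\infty)^\val}$. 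For the final clause, a cofinal subsystem of $\{(n,\cT_n^\alpha)\}$ is again cofinal in $I$, because a composite of cofinal inclusions is cofinal, so the identical argument yields the same conclusion for it.

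The only point requiring any care is the cofinality invariance of filtered homotopy colimits of dg-categories; everything else is formal once the preceding Proposition is granted, which is precisely why this is phrased as a Corollary rather than a Proposition.
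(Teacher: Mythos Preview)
The proposal is correct and follows the same approach the paper takes implicitly: the Corollary is stated with a \qed and no proof, precisely because it follows formally from Definition~\ref{properf}, the cofinality established in the preceding Proposition, and the invariance of filtered homotopy colimits under restriction to cofinal subsystems. Your write-up simply spells out what the paper leaves to the reader.
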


\subsection{{Compatible cofinal systems of triangulations}}
{The statement of Theorem \ref{thm:main} in the local case is that there is an equivalence of dg-categories 
$
\Db{(\bA^{r+1}_\infty)^\val}\to \Db{\radice[\infty]{\bA^{r+1}}},$ which is obtained as a limit of Fourier-Mukai functors induced by a push-pull formalism, see Section \ref{sec:simple.log.ss}.} We will deduce this by proving that we can choose a sequence $\{k_n\}_{n\in \bN}$ of positive integers and, for every $n,$ a unimodular triangulation $\cT_{k_n}$ of the standard simplex $k_n\Delta^r$ such that 
\begin{itemize}
\item $k_n$ is cofinal in $\bN$ ordered by divisibility, so that $\varprojlim_n \radice[k_n]{\bA^{r+1}}\simeq \radice[\infty]{\bA^{r+1}}$, and
\item the triangulations $\cT_{k_n}$ are cofinal in the system of all rational triangulations of $\Delta^r$ (in the sense of Remark \ref{rmk:rational.triangulations}), so that $\varprojlim_n (\bA^{r+1}_{k_n})_{\cT_{k_n}}\simeq (\bA^{r+1}_\infty)^\val$.
\end{itemize}
We will also check (Proposition \ref{prop:mckay}) that for every $n$ and every unimodular triangulation $\cT$ of the simplex $n\Delta^r$,  the Fourier-Mukai functor $\Db{(\bA^{r+1}_n)_{\cT}}\to \Db{\radice[n]{\bA^{r+1}}}$ given by push-pull along the diagram $$
\xymatrix{
& \radice[n]{\bA^{r+1}}  \times_{\bA_n^{r+1}} (\bA^{r+1}_n)_{\cT} \ar[ld]\ar[rd] & \\
\radice[n]{\bA^{r+1}} \ar[rd] & & (\bA^{r+1}_n)_{\cT}  \ar[ld] \\ 
& \bA_n^{r+1} & 
}
$$is an equivalence of dg-categories. {By the previous discussion, this will imply our result.}

\begin{remark}\label{rmk:r.1}
Note that if $r=1$ (i.e. we are looking at the degeneration $\phi_1\colon \bA^2\to \bA^1$ of $\bG_m$ to the union of the two coordinate axes), then there is a unique unimodular triangulation of the simplex $n\Delta^1$ for all $n$, corresponding to the minimal resolution of the surface $A_{n-1}$ singularity, see Remark \ref{rmk:r=1}. In this case we do not have to worry about carefully choosing this cofinal system of triangulations (which, on the other hand, for higher values of $r$ are far from unique).
\end{remark}

{In view of adapting this strategy to the global case in the next section, we will also take care to choose the unimodular triangulations $\cT_{k_n}$ to be completely symmetric with respect to permutations of the vertices of $k_n\Delta^r$ (so that the local blow-ups will be compatible, and we will be able to glue them together). We stress that using these symmetric triangulations is not necessary to prove Theorem \ref{thm:main} in the local case. However they will play an important role in the next Section, where we will give a proof of Theorem \ref{thm:main} in the general case.} 

{
We devote the next subsection to the construction of symmetric triangulations, and resume the proof of Theorem \ref{thm:main} in the local setting in (\ref{sec:cooftheco}).}

\subsubsection{{Symmetric triangulations}}
Now we take a short detour  to establish the existence of triangulations of the lattice simplices $n\Delta^r$ with some {symmetry properties that  will be important later on.} {Given an integer $n$}, in Lemma \ref{lem:sym} below we establish in particular the existence of a positive integer $mn \in \bN$ divisible by $n$ and a unimodular triangulation of the simplex $(mn)\Delta^r$, that refines every possible lattice triangulation of $n\Delta^r$ and is moreover symmetric with respect to permutations of the vertices.

The basic idea to obtain such a triangulation is the following: we can ``superimpose'' every possible lattice triangulation of $n\Delta^r$, of which there will be finitely many, and think of this picture as giving a rational triangulation of $\Delta$ (as in Remark \ref{rmk:rational.triangulations}). By looking at the coordinates of the vertices of this new triangulation we can determine the integer $m$, and then we can refine the corresponding lattice triangulation to be unimodular and symmetric. {Before giving of a formal proof, we clarify this geometry in the simplest non-trivial example where  $r=2$ and $n=2$.}

\begin{example}\label{ex:triangulation}
Assume $r=2$ and $n=2$, so that we are looking at the lattice polytope $2\Delta^2\subseteq \bR^3$. This is a triangle, with 3 lattice points on each side and none in the interior, that can be realized as the closed region on the hyperplane $x+y+z=2$ in $\bR^3$ {lying in the first orthant}.
\bigskip
\begin{center}
\begin{tikzpicture}[scale=0.8]
\draw (0,0) -- (4,0) -- (4/2 , 0.86*4 ) -- cycle;
\fill (0,0) circle (3pt);
\fill (1 , 0.86*2 ) circle (3pt);
\fill (3 , 0.86*2 ) circle (3pt);
\fill (2 , 0) circle (3pt);
\fill (4,0) circle (3pt);
\fill (4/2 , 0.86 *4 )  circle (3pt);
\end{tikzpicture}
\end{center}
\bigskip
Up to symmetries, there are exactly $2$ different unimodular triangulations of this simplex, given by the following pictures.
\bigskip
\begin{center}
\begin{tikzpicture}[scale=0.8]
\draw (0,0) -- (4,0) -- (4/2 , 0.86 *4 ) -- cycle;
\draw (2,0) -- (1 , 0.86*2 );
\draw (2,0) -- (3 , 0.86*2 );
\draw (2,0) -- (4/2, 0.86 *4);
\fill (0,0) circle (3pt);
\fill (1 , 0.86*2 ) circle (3pt);
\fill (3 , 0.86*2 ) circle (3pt);
\fill (2 , 0) circle (3pt);
\fill (4,0) circle (3pt);
\fill (4/2 , 0.86 *4 )  circle (3pt);
\draw (6,0) -- (10,0) -- (4/2+6 , 0.86 *4 ) -- cycle;
\draw (2+6,0) -- (1+6 , 0.86*2 );
\draw (2+6,0) -- (3+6 , 0.86*2 );
\draw (1+6 , 0.86*2 ) -- (3+6 , 0.86*2 );
\fill (0+6,0) circle (3pt);
\fill (1+6 , 0.86*2 ) circle (3pt);
\fill (3 +6, 0.86*2 ) circle (3pt);
\fill (2+6 , 0) circle (3pt);
\fill (4+6,0) circle (3pt);
\fill (4/2+6 , 0.86 *4 )  circle (3pt);
\end{tikzpicture}
\end{center}
\bigskip
If we superimpose them (and all the ones obtained by symmetries), we obtain
\bigskip
\begin{center}
\begin{tikzpicture}[scale=0.8]
\draw (0,0) -- (4,0) -- (4/2 , 0.86 *4 ) -- cycle;
\draw (2,0) -- (1 , 0.86*2 );
\draw (2,0) -- (3 , 0.86*2 );
\draw (2,0) -- (4/2, 0.86 *4);
\draw (1 , 0.86*2 ) -- (3 , 0.86*2 );
\draw (0,0) -- (3 , 0.86*2 );
\draw (1 , 0.86*2 ) -- (4,0);
\fill (0,0) circle (3pt);
\fill (1 , 0.86*2 ) circle (3pt);
\fill (3 , 0.86*2 ) circle (3pt);
\fill (2 , 0) circle (3pt);
\fill (4,0) circle (3pt);
\fill (4/2 , 0.86 *4 )  circle (3pt);
\end{tikzpicture}
\end{center}
\bigskip
which will become a lattice triangulation if we look at the lattice $\frac{1}{6}\bZ^3$ (so $mn=12$), obtaining the following picture.
\bigskip
\begin{center}
\begin{tikzpicture}
\draw (0,0) -- (4,0) -- (4/2 , 0.86 *4 ) -- cycle;
\draw (2,0) -- (1 , 0.86*2 );
\draw (2,0) -- (3 , 0.86*2 );
\draw (2,0) -- (4/2, 0.86 *4);
\draw (1 , 0.86*2 ) -- (3 , 0.86*2 );
\draw (0,0) -- (3 , 0.86*2 );
\draw (1 , 0.86*2 ) -- (4,0);
\fill (0,0) circle (3pt);
\fill (1 , 0.86*2 ) circle (3pt);
\fill (3 , 0.86*2 ) circle (3pt);
\fill (2 , 0) circle (3pt);
\fill (4,0) circle (3pt);
\fill (4/2 , 0.86 *4 )  circle (3pt);
\foreach \j in {0,...,12}
{
\foreach \i in {0,...,\j}
{
\fill (12 * 4/24-\j*4/24+ \i* 4/12, 12* 0.86 * 4/12 -\j* 0.86 * 4/12) circle (1pt);
}
}
\end{tikzpicture}
\end{center}
We can complete this to a unimodular symmetric triangulation of $12\Delta^2$, for example as shown in the following picture.
\bigskip
\begin{center}
\begin{tikzpicture}[scale=1]
\draw (0,0) -- (4,0) -- (4/2 , 0.86 *4 ) -- cycle;
\draw (2,0) -- (1 , 0.86*2 );
\draw (2,0) -- (3 , 0.86*2 );
\draw (2,0) -- (4/2, 0.86 *4);
\draw (1 , 0.86*2 ) -- (3 , 0.86*2 );
\draw (0,0) -- (3 , 0.86*2 );
\draw (1 , 0.86*2 ) -- (4,0);
\fill (0,0) circle (2.5pt);
\fill (1 , 0.86*2 ) circle (2.5pt);
\fill (3 , 0.86*2 ) circle (2.5pt);
\fill (2 , 0) circle (2.5pt);
\fill (4,0) circle (2.5pt);
\fill (4/2 , 0.86 *4 )  circle (2.5pt);
\foreach \j in {0,...,12}
{
\foreach \i in {0,...,\j}
{
\fill (12 * 4/24-\j*4/24+ \i* 4/12, 12* 0.86 * 4/12 -\j* 0.86 * 4/12) circle (1pt);
}
}
\draw (4/24, 0.86*4/12 ) -- ( 4/12+4/24+4*4/12, 0.86*4/12 );
\draw (5*4/12+4/12+4/24, 0.86*4/12 ) -- (6*4/12+ 4/12+4/24+4*4/12, 0.86*4/12 );
\draw (2*4/24, 2*0.86*4/12 ) -- ( 4-2*4/24, 2*0.86*4/12 );
\draw (4/12,0) -- (4/12+5*4/24, 5*0.86*4/12);
\draw (1+4/12, 6*0.86*4/12 ) -- (2+4/24, 11*0.86*4/12);
\draw (2*4/12, 0 ) -- ( 2+2*4/24, 10*0.86*4/12 );
\draw (5*4/12+ 4/12+2*4/24+4*4/12, 0 ) -- (3-4/24, 5*0.86*4/12);
\draw (3-4/12, 6*0.86*4/12)-- (2-4/24,11*0.86*4/12);
\draw ( 4-4*4/24, 0) -- ( 2-4/12, 10*0.86*4/12 );
\draw (4*4/12,0) -- (2+2*4/12, 8*0.86*4/12 );
\draw (2+2*4/12,0) -- (1+4/12, 8*0.86*4/12 );
\draw (4*4/24, 4*0.86*4/12) -- (4-4*4/24, 4*0.86*4/12);
\draw (1+4/12, 8*0.86*4/12) -- (3-4/12, 8*0.86*4/12);
\draw (1+2*4/12, 10*0.86*4/12) -- (3-2*4/12, 10*0.86*4/12);
\draw (2+2*4/12,0) -- (3+4/12, 4*0.86*4/12);
\draw (2+4*4/12,0) -- (4-2*4/24, 2*0.86*4/12);
\draw (2*4/12,0) -- (4/12, 2*0.86*4/12);
\draw (4*4/12,0) -- (2*4/12, 4*0.86*4/12);
\draw (2-3*4/24, 9*0.86*4/12) -- (2-4/24, 9*0.86*4/12);
\draw (2+4/24, 9*0.86*4/12) -- (2+3*4/24, 9*0.86*4/12);
\draw (3*4/12,0) -- (3*4/12 - 4/24, 0.86*4/12);
\draw (2*4/12, 2*0.86*4/12) -- (3*4/24, 3*0.86*4/12);
\draw (2+3*4/12, 0) -- (2+3*4/12+4/24, 0.86*4/12);
\draw (2+3*4/12+2*4/24, 2*0.86*4/12) -- (4-3*4/24, 3*0.86*4/12);
\draw (3*4/24, 3*0.86*4/12 ) -- (3*4/24+4*4/12,3*0.86*4/12 );
\draw (2+4/24, 3*0.86*4/12 ) -- (4-3*4/24,3*0.86*4/12 );
\draw (5*4/24, 5*0.86*4/12 ) -- (2-4/24,5*0.86*4/12 );
\draw (2+4/24, 5*0.86*4/12 ) -- (4-5*4/24,5*0.86*4/12 );
\draw (7*4/24, 7*0.86*4/12 ) -- (2-4/24,7*0.86*4/12 );
\draw (2+4/24, 7*0.86*4/12 ) -- (4-7*4/24,7*0.86*4/12 );
\draw (2-4/12,0) -- (2-4*4/24, 2*0.86*4/12);
\draw (2-5*4/24, 3*0.86*4/12) -- (1-4/24, 5*0.86*4/12);
\draw (2+4/12, 0) -- (2-4/24, 3*0.86*4/12);
\draw (2-2*4/24, 4*0.86*4/12) -- (1+4/24, 7*0.86*4/12);
\draw (2+3*4/12, 0) -- (2+4/12, 4*0.86*4/12);
\draw (2+4/24, 5*0.86*4/12) -- (2-3*4/24, 9*0.86*4/12);
\draw (2+4/12,0) -- (2+2*4/12, 2*0.86*4/12);
\draw (2+5*4/24, 3*0.86*4/12) -- (2+7*4/24, 5*0.86*4/12);
\draw (2-4/12,0) -- (2+4/24, 3*0.86*4/12);
\draw (2+4/12, 4*0.86*4/12) -- (3-4/24,7*0.86*4/12);
\draw (3*4/12 ,0) -- (5*4/12, 4*0.86*4/12);
\draw (2-4/24 ,5*0.86*4/12) -- (2+3*4/24, 9*0.86*4/12);
\end{tikzpicture}
\end{center}
\medskip
\end{example}

Let us fix some notations for the following lemmas. 
Let $N$ be an $r$-dimensional lattice,  
$N \cong \bZ^r$, and let $\Delta = n \Delta^r$ be a lattice simplex.  With small abuse of notation, we denote $\Delta$ also the convex envelope of $\Delta$ in  
$N_\bR = N \otimes_\bZ \bR $. We can equip $N_\bR$ with a scalar product by choosing a basis $E$ of $N$, and stipulating that $E$ is an orthonormal basis of $N_\bR$. The automorphism group of $\Delta$, as a polytope, coincides with the group of permutations on the set of vertices of $\Delta.$ By labeling the vertices of $\Delta$ with numbers from $0$ to $n,$  we can identify the automorphism group of $\Delta$ with the symmetric group $S_{r+1}$ on the letters  
$
\{0, \ldots, r\}.
$

The action of $S_{r+1}$ on $\Delta$ can be described geometrically in a simple way. Let $i$ and 
$j$ be vertices of $\Delta$. The pair $i,j$ determines a hyperplane $H_{i,j}$  in $N_\bR$:   $H_{i,j}$ is the hyperplane passing through the midpoint between $i$ and $j$ and all the vertices of $\Delta$ except $i$ and $j$. 
We can realize the transposition $(i, j) \in S_{r+1}$ as the reflection of $\Delta$ through the hyperplane 
$H_{i,j}$. Thus we can also identify the group $S_{r+1}$ with the group of automorphisms of 
$\Delta \subset N_\bR$ generated by these reflections.

\begin{lemma}
\label{lem:sym}
With the notation as above, there exists an $m \in \bN$ such that, if $N'$ is the overlattice 
$$
N \subset N' = \frac{1}{m}N,   
$$
and $\Delta'$ is the image of $\Delta$ in $N',$ then  
$\Delta'$ admits a unimodular triangulation 
$\cT$ with the following properties: 
\begin{enumerate}
\item $\cT$ refines all triangulations of $\Delta,$ 
\item $\cT$ is invariant under the action of $S_{r+1}$ on $\Delta'.$
\end{enumerate}
\end{lemma}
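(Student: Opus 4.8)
The strategy is to produce the triangulation $\cT$ in three stages: first superimpose all lattice triangulations of $\Delta = n\Delta^r$ to get a polyhedral subdivision with rational vertices, then pass to an overlattice in which this becomes a genuine lattice subdivision, and finally refine it to a unimodular one in an $S_{r+1}$-equivariant way. Concretely, let $\{\cT_1,\dots,\cT_N\}$ be the (finitely many) unimodular lattice triangulations of $\Delta$. Taking the common refinement $\cS := \cT_1 \wedge \cdots \wedge \cT_N$ — i.e. the polyhedral complex whose cells are the nonempty intersections of cells of the $\cT_i$ — gives a polytopal subdivision of $\Delta$ all of whose vertices have rational coordinates with respect to $N$. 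Let $m$ be the least common multiple of all the denominators occurring among these vertices (equivalently, choose $m$ so that every vertex of $\cS$ lies in $N' := \tfrac1m N$); then $\cS$, viewed in $N'$, is a lattice polytopal subdivision of $\Delta' \subset N'_\bR$. Replacing $m$ by a multiple if necessary we may moreover assume $n \mid m$, which will be needed for the statement to talk about $(mn)\Delta^r$-type simplices in the application.

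The second stage is to make $\cS$ symmetric. The group $S_{r+1}$ acts on $\Delta$ by the reflections $(i,j) \leftrightarrow$ reflection through $H_{i,j}$ described in the excerpt, and this action permutes the finite set of lattice triangulations of $\Delta$ (a lattice automorphism of $\Delta$ carries unimodular lattice triangulations to unimodular lattice triangulations). Hence the common refinement $\cS$ of \emph{all} of them is automatically $S_{r+1}$-invariant: for $\sigma \in S_{r+1}$, $\sigma(\cS) = \sigma(\cT_1) \wedge \cdots \wedge \sigma(\cT_N) = \cT_{\pi(1)} \wedge \cdots \wedge \cT_{\pi(N)} = \cS$ since $\sigma$ merely permutes the index set. (And $\sigma$ preserves the lattice $N'$, since it preserves $N$ and hence $\tfrac1m N$.) So after the first two stages we have an $S_{r+1}$-invariant lattice polytopal subdivision of $\Delta'$ refining every lattice triangulation of $\Delta$.

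The third, and main, stage is to refine this invariant polytopal subdivision $\cS$ to a unimodular \emph{triangulation} $\cT$ while keeping it $S_{r+1}$-invariant. This is the real obstacle: an arbitrary equivariant subdivision need not admit an equivariant unimodular refinement that plays well with a group action on the nose, because "pulling" or "pushing" triangulations (the standard way to produce unimodular refinements, e.g. via a regular/placing triangulation, cf. \cite{KKMS}) breaks symmetry when vertices lie in the same orbit. The way I would handle this is to use the barycentric-type subdivision adapted to the lattice: on each cell $P$ of $\cS$ one first passes to a sufficiently divisible overlattice (enlarge $m$ once more, uniformly, which is harmless) so that $P$ becomes a lattice polytope all of whose cells are "small"; then one performs an $S_{r+1}$-equivariant choice of refinement cell-by-cell, inducting on dimension of cells. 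The inductive step: having equivariantly triangulated the boundary $\partial P$ of a cell $P$, extend over $P$ by coning from an $S_{r+1}_P$-fixed interior lattice point of $P$ (which exists after passing to a further divisible overlattice — the barycenter of $P$ becomes a lattice point), where $S_{r+1}_P$ is the stabilizer of $P$; since the chosen apex is fixed by $S_{r+1}_P$ and the boundary triangulation is $S_{r+1}_P$-equivariant, the cone is too, and the construction is compatible across the $S_{r+1}$-orbit of $P$ by transport of structure. Iterating over all orbits of cells of increasing dimension yields an $S_{r+1}$-invariant lattice triangulation refining $\cS$; a final application of a standard equivariant unimodularity argument — again by coning from lattice points, or by invoking the fact that in fixed dimension any lattice triangulation is refined by a unimodular one and running the equivariant induction one more time — makes it unimodular. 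Since $\cT$ refines $\cS$, and $\cS$ refines every lattice triangulation of $\Delta$, property (1) holds; property (2) is built in. I expect the bookkeeping of "pass to a further divisible overlattice" steps — and verifying that a single common $m$ works for all of them simultaneously — to be the fiddly part, but conceptually the coning-from-fixed-lattice-points induction is what makes equivariance and unimodularity coexist.
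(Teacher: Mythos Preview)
Your first two stages match the paper's setup closely: the paper also forms the common refinement $\cP$ of all lattice triangulations of $\Delta$ (via intersecting all segments between lattice points), observes it is $S_{r+1}$-invariant, and passes to an overlattice $N'$ to make it a lattice subdivision. (One small discrepancy: you take the common refinement only of the \emph{unimodular} triangulations, whereas the statement asks $\cT$ to refine all lattice triangulations of $\Delta$; this is easily fixed by enlarging the family.)

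The substantive divergence, and the gap in your argument, is Stage~3. Your inductive coning from stabilizer-fixed points does produce an $S_{r+1}$-equivariant lattice \emph{triangulation} refining $\cS$, but the resulting simplices have no reason to be unimodular, and your ``final application of a standard equivariant unimodularity argument'' is not actually standard: the usual procedures for producing unimodular refinements (placing triangulations, stellar subdivisions as in KKMS) are not equivariant, and ``running the equivariant induction one more time'' is circular, since that induction does not output unimodular simplices either. To make this work you would need to know that after your coning step every top-dimensional simplex has trivial stabilizer (so that one can refine one simplex per orbit independently and transport), which is plausible but requires an argument you do not supply.

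The paper sidesteps this difficulty with a cleaner device: it further subdivides $\Delta'$ by the reflection hyperplanes $H_{i,j}$ themselves, cutting $\Delta'$ into closed chambers that are fundamental domains for the $S_{r+1}$-action (after enlarging $m$ so that the chambers are lattice polytopes). On a single chamber $C$ one then chooses \emph{any} unimodular triangulation $\cT_C$ refining $\cP|_C$ --- a purely non-equivariant problem --- and transports $\cT_C$ to every other chamber by the group action. Because each wall between adjacent chambers is fixed \emph{pointwise} by the reflection that swaps them, the transported triangulations automatically agree on the walls, and the result is a globally defined $S_{r+1}$-invariant unimodular triangulation. This fundamental-domain trick is the idea your proposal is missing.
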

\begin{proof}
Consider the set of all line segments in $N_{\bR}$ joining two lattice points contained in $\Delta$.  In order to find a triangulation refining all triangulations of $\Delta$ we need, first of all, to consider a refinement $M$ of 
the lattice $N$ such that all the intersection points of these line segments lie in $M$. Note that the intersection points will have rational coordinates: i.e. there exists an 
$l \in \bN$ such that the intersections points all lie in 
$$
M=\frac{1}{l}N \subset N_\bR. 
$$
Call $\Delta_M$ the image of $\Delta$ in $M$. The intersections of the simplices belonging to all the triangulations of $\Delta$ gives rise to a lattice polyhedral decomposition of $\Delta_M,$ which we denote $\cP.$ By construction $\cP$ is the coarsest polyhedral decomposition of $\Delta_M$ that refines simultaneously all the triangulations of $\Delta.$ Note that $\cP$ is invariant under the action of $S_{r+1}.$

In order to refine this to a unimodular triangulation $\cT$ that is invariant under the action of $S_{r+1}$, we need to consider a refinement of the lattice $N$ which is, in principle, different from $M$. Namely, we need to consider an overlattice $M'$ of $N$ having the property that  all the intersections points of the hyperplanes $H_{i,j}$, with themselves and with the faces of $\Delta$, lie in $M'.$  
These intersection points are rational  and thus, as before, there exists $l' \in \bN$ such that the overlattice 
$$
M' = \frac{1}{l'}N \subset N_\bR 
$$   
has the required property. 

Now let $m = \mathrm{lcm}\{l, l'\}$, and let $N'$ be the refinement of $N$ given by 
$$
N \subset N' = \frac{1}{m}N \subset N_\bR.
$$
Consider the polyhedral decomposition of $\Delta'$ given by the closed chambers cut out by the hyperplanes $H_{i,j}$. Let $C_H$ be the set of these chambers. Since $N'$ is a refinement of $M'$, all the elements of  $C_H$ are lattice polyhedra for $N'.$  
Further, each of the elements of $C_H$ is (the closure of) a fundamental domain for the action of $S_{r+1}$ on $\Delta'.$ In particular, the cardinality of $C_H$ is  equal to the order of $S_{r+1}$, and $C_H$ is a 
$S_{r+1}$-torsor. 

Fix a chamber $C$ in $C_H.$ The polyhedral decomposition $\cP$ constructed above  restricts to a polyhedral decomposition of $C,$ which we denote $\cP_C$. Now choose  a unimodular triangulation of 
$C$, denoted $\cT_C$, that refines $\cP_C.$ Since 
$C_H$ is a torsor for $S_{r+1}$, we can transport the triangulation 
$\cT_C$ of $C$ to a triangulation of any other $C'$ in $C_H$ by acting with the appropriate element of $S_{r+1}$. This gives rise to a triangulation $\cT$ of $\Delta'.$ By construction $\cT$ refines $\cP,$ and thus it refines all triangulations of $\Delta$. Further $\cT$ is invariant under the action of $S_{r+1}$, and this concludes the proof. 
\end{proof}

A triangulation $\cT$ as in the statement of Lemma \ref{lem:sym} has additional properties that will be important later on. Namely, restricting $\cT$ to any of the faces of $\Delta$  yields a triangulation of that face that also satisfies properties $(1)$ and $(2)$ from Lemma \ref{lem:sym}.  We formulate a precise statement in  Lemma \ref{lem:sym2} below.

Let $\Delta_s$ be a an 
$s$-dimensional face of $\Delta.$ 
Denote $\Delta_s'$ the image of $\Delta_s$ in $N'$, where $N'$ is an overlattice of $N$ as in the statement of Lemma \ref{lem:sym}. Every triangulation $\cT$ of $\Delta$ (resp. $\Delta'$) restricts to a triangulation of $\Delta_s$ 
(resp. $\Delta_s'$) which we denote by $\cT_s.$
\begin{lemma}
\label{lem:sym2}
If $\cT$ is a unimodular triangulation of 
$\Delta'$ satisfying the conditions of Lemma \ref{lem:sym}, then it has the following additional property: for every $s \leq r$, and for every $s$-dimensional face $\Delta_s$ of $\Delta$, $\cT_s$ is a triangulation of $\Delta_s'$ such that \begin{enumerate}
\item $\cT_s$ refines all triangulations of $\Delta_s$,  
\item $\cT_s$ is invariant under the action of $S_{s+1}$ on $\Delta_s'.$
\end{enumerate}
\end{lemma}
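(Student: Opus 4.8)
The plan is to observe that the statement of Lemma \ref{lem:sym2} follows almost immediately from the construction carried out in the proof of Lemma \ref{lem:sym}, by restricting every object appearing there to the face $\Delta_s$. I would proceed as follows. First I would fix an $s$-dimensional face $\Delta_s$ of $\Delta$; since $\Delta = n\Delta^r$ is the standard lattice simplex, $\Delta_s$ is itself a standard lattice simplex $n\Delta^s$ sitting inside the coordinate subspace spanned by $s+1$ of the vertices, and its own lattice of definition is the trace of $N$ on that subspace. The restriction $\cT_s$ of $\cT$ to $\Delta_s'$ is a triangulation of $\Delta_s'$ because $\Delta_s$ is a union of faces of cells of the polyhedral decomposition $\cP$ (being a face of $\Delta$), hence a union of faces of simplices of $\cT$; and it is unimodular because faces of unimodular simplices are unimodular. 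This gives that $\cT_s$ is a well-defined unimodular triangulation of $\Delta_s'$.

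Next, for property $(1)$, I would argue that restricting a triangulation commutes, in the appropriate sense, with the superposition operation used to build $\cP$. Concretely: any triangulation $\cT''$ of $\Delta_s$ can be extended (coning off, or any ad hoc extension) to a triangulation of $\Delta$, which by Lemma \ref{lem:sym}(1) is refined by $\cT$; restricting back to $\Delta_s$ shows that $\cT_s$ refines $\cT''$. (Alternatively, and more cleanly, one notes that the set of line segments joining lattice points of $\Delta_s$ is a subset of those joining lattice points of $\Delta$, so the polyhedral decomposition $\cP$ already restricts on $\Delta_s$ to a common refinement of all triangulations of $\Delta_s$, and $\cT_s$ refines that restriction.) For property $(2)$, I would use that the stabilizer of $\Delta_s$ inside $S_{r+1}$ surjects onto the full symmetric group $S_{s+1}$ acting on the vertices of $\Delta_s$: this stabilizer is the Young subgroup $S_{s+1}\times S_{r-s}$, and its first factor acts on $\Delta_s$ exactly as $S_{s+1}$ while fixing $\Delta_s$ setwise. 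Since $\cT$ is invariant under all of $S_{r+1}$ by Lemma \ref{lem:sym}(2), in particular it is invariant under this subgroup, and therefore $\cT_s = \cT|_{\Delta_s'}$ is invariant under the induced $S_{s+1}$-action. One should also check the reflection hyperplanes $H_{i,j}$ for $i,j$ vertices of $\Delta_s$ restrict to the analogous hyperplanes inside the span of $\Delta_s$, so the geometric description of the $S_{s+1}$-action matches; this is immediate from the definition of $H_{i,j}$.

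I do not expect any serious obstacle here: the lemma is essentially a functoriality/compatibility statement, and the only mild subtlety is making sure that the overlattice $N'$ chosen in Lemma \ref{lem:sym} is still fine enough when restricted to the subspace spanned by $\Delta_s$ — but this is automatic, since $N' = \frac1m N$ and the trace of $\frac1m N$ on the coordinate subspace is $\frac1m$ times the trace of $N$, and all the relevant rational intersection points for $\Delta_s$ are a subset of those for $\Delta$, hence already lie in $N'$. So the proof is just: restrict $\cP$, restrict $\cT$, invoke $(1)$ and $(2)$ of Lemma \ref{lem:sym} for the Young subgroup, and note faces of unimodular simplices are unimodular. The write-up should be short, a single paragraph, emphasizing that everything was in fact already arranged in the construction of $\cT$.
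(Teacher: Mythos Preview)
Your proposal is correct and follows essentially the same approach as the paper: for property $(2)$ you use that $S_{s+1}$ sits inside $S_{r+1}$ as (a factor of) the stabilizer of $\Delta_s$, and for property $(1)$ your ``cleaner'' alternative---that the polyhedral decomposition $\cP$ restricts on $\Delta_s'$ to a common refinement of all triangulations of $\Delta_s$, which $\cT_s$ then refines---is exactly the paper's argument. Your write-up is somewhat more detailed (explicitly checking that $\cT_s$ is a unimodular triangulation, identifying the Young subgroup, noting the overlattice restricts correctly), but the substance is identical.
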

\begin{proof}
Note that property $(2)$ is clearly satisfied: any $S_{r+1}$-invariant triangulation of $\Delta'$ restricts to an $S_{s+1}$-invariant triangulation of $\Delta'_s,$ as $S_{s+1}$ can be realized as a subgroup of $S_{r+1}$. Next, as in the proof of Lemma \ref{lem:sym}, denote $\cP$ be the polyhedral decomposition of $\Delta'$ obtained by intersecting all simplices in all triangulations of $\Delta.$ Note that its restriction $\cP_s$ to $\Delta_s'$ is the polyhedral decomposition  of $\Delta_s'$ obtained by intersecting all simplices in all triangulations of $\Delta_s.$ By construction, $\cT_s$ refines $\cP_s$ and is therefore a simultaneous refinement of all triangulations of $\Delta_s.$ 
\end{proof}

\subsubsection{{Construction of the cofinal system $\{(k_n, \cT_{k_n})\}_{n\in\bN}$}}
\label{sec:cooftheco}

Let us get back to the construction of a cofinal system $\{(k_n, \cT_{k_n})\}_{n\in\bN}$ of indices $k_n$ and unimodular triangulations $ \cT_{k_n}$ of $k_n\Delta^r$. {Recall that} we want to find a cofinal subsystem of pairs $\{(k_n,\cT_{k_n})\}_{n\in \bN}$, where $\cT_n$ is a unimodular triangulation of $n\Delta^{r}$, i.e. we want {$\{(k_n,\cT_{k_n})\}_{n\in \bN}$ to have the following properties:} 
\begin{itemize}
\item $k_{n-1} \mid k_n$, 
\item $\cT_{k_{n}}$ is a refinement of $\cT_{k_{n-1}}$ for all $n$, and 
\item for every pair $(m, \cT)$ there exists an $n_0$ such that $m\mid k_{n_0}$ and $\cT_{k_{n_0}}$ is a refinement of $\cT$ (and this will then be true for every $(k_n,\cT_{k_n})$ with $n\geq n_0$). Note that this in particular entails cofinality of $\{k_n\}$ in $\bN$ by divisibility.
\end{itemize} 
We will construct the sequence $\{(k_n,\cT_{k_n})\}_{n\in \bN}$ inductively, using Lemma \ref{lem:sym}, and we will also choose $\cT_{k_n}$ to be symmetric with respect to permutations of the vertices of the simplex $k_n\Delta^r$.

Start by setting $k_1=1$, and taking $\cT_1$ to be the (trivial) unique unimodular triangulation of the simplex $\Delta^r$. Assume that we have constructed $(k_{n-1},\cT_{k_{n-1}})$. By Lemma \ref{lem:sym}, there exists an index $m(k_{n-1})$ and a (symmetric) unimodular triangulation $\cT_{m(k_{n-1})}$ of $m(k_{n-1})\Delta^r$ that refines every possible unimodular triangulation of the simplex $k_{n-1}\Delta^r$. Moreover, if we increase this index to $n\cdot m(k_{n-1})$, there will be some (symmetric) unimodular triangulation $\cT_{n\cdot m(k_{n-1})}$ of $n\cdot m(k_{n-1})\Delta^r$ that refines the triangulation $\cT_{m(k_{n-1})}$. Set $k_n:=n\cdot m(k_{n-1})$ and $\cT_{k_n}:=\cT_{n\cdot m(k_{n-1})}$. 

Let us check that the resulting system $\{(k_n,\cT_{k_n})\}_{n\in \bN}$ is cofinal (in both $\bN$ ordered by divisibility, and in the system of unimodular triangulations of the simplices $n\Delta^r$). {Note that the sequence $h_n=n!$ 
is cofinal  for divisibility in $\bN.$}  Clearly we have $n!\mid k_n$, so the sequence $\{k_n\}_{n\in \bN}$ 
is also cofinal in $\bN$ ordered by divisibility. As for the triangulations, consider a pair $(n,\cT)$, where $\cT$ is a unimodular triangulation of the simplex $n\Delta^r$. Then by construction we have both that $n\mid k_{n}\mid k_{n+1}$, and the unimodular triangulation $\cT_{k_{n+1}}$ is a refinement of any unimodular triangulation of ``level'' lower than (i.e. dividing) $k_n$. In particular, since $n\mid k_n$, the given $\cT$ is such a triangulation, so $(n,\cT)\leq (k_{n+1}, \cT_{k_{n+1}})$ in the ordering.

\subsection{{Conclusion of the proof}}

In order to {finish the proof of} Theorem \ref{thm:main} in the local case, what is left to verify is that for every $n$ and every unimodular triangulation $\cT$ of the simplex $n\Delta^r$, the Fourier-Mukai functor $\Db{(\bA^{r+1}_n)_{\cT}}\to \Db{\radice[n]{\bA^{r+1}}}$ is an equivalence. By descent for $\Db{-}$ it is sufficient to show that both statements are true after making a further \'etale base change on the base $\radice[n]{\bA^1}$.

So let us consider the natural \'etale atlas $\bA^1=\Spec k[t_n]\to [\Spec k[t_n] \, /\, \mu_n]=\radice[n]{\bA^1}$, and pullback the whole diagram
$$
\xymatrix{
\radice[n]{\bA^{r+1}}
\ar[rd] & & (\bA^{r+1}_n)_\cT \ar[ld] \\ 
& \bA^{r+1}_n \ar[d] & \\
& \radice[n]{\bA^1} &
}
$$
to the scheme $\bA^1$, obtaining the diagram
$$
\xymatrix{
[{\bA^{r+1}} \, / \, \mu_n^{r}] 
\ar[rd] & & (\bA^{r+1}/\mu_n^r)_\cT \ar[ld] \\ 
& \bA^{r+1}/\mu_n^r \ar[d] & \\
& {\bA^1} &
}
$$
where $\bA^{r+1}=\Spec k[x_{i,n}]$ is the atlas of the root stack $\radice[n]{\bA^{r+1}}=[\bA^{r+1}\, /\, \mu_n^{r+1}]$, and $\mu_n^r$ is acting via the inclusion $\mu_n^r\subseteq \mu_n^{r+1}$ as the kernel of the map $$
\mu_n^{r+1}\to \mu_n, \quad (\xi_1,\hdots,\xi_{r+1})\mapsto \xi_1\cdots \xi_{r+1}.$$ Pulling back along $\bA^1\to \radice[n]{\bA^1}=[\bA^1\, / \,\mu_n]$ removes this $\mu_n$ from the stabilizers of all the stacks in the diagram. 

Note also that, by construction, the log blow-up  $(\bA^{r+1}_n)_\cT\to \bA^{r+1}_n$ restricts to the log blow-up $(\bA^{r+1}/\mu_n^r)_\cT \to \bA^{r+1}/\mu_n^r$ determined by the \emph{same} subdivision of the fan that the toric stack $ \bA^{r+1}_n$ and the toric variety $\bA^{r+1}/\mu_n^r$ share (see Remark \ref{rmk:toric}), which is the fan given by the first orthant in $\bR^{r+1}$, for the lattice $L_n\subseteq \bR^{r+1}$ of points $(a_1,\hdots, a_{r+1})$ in $\bZ^{r+1}$ such that $a_1+\cdots+a_{r+1}\equiv 0 \; (\mathrm{mod} \; n)$.

As explained above, by \'etale descent our local statement reduces to proving that the Fourier-Mukai functor $$\Db{(\bA^{r+1}/\mu_n^r)_\cT}\to \Db{[{\bA^{r+1}} \, / \, \mu_n^{r}]}$$ induced by the structure sheaf of the fiber product $[{\bA^{r+1}} \, / \, \mu_n^{r}]\times_{  \bA^{r+1}/\mu_n^r }(\bA^{r+1}/\mu_n^r)_\cT$ is an equivalence. {This is a special case  of the derived McKay correspondence for abelian quotient singularities, a result proven by Kawamata in \cite{Ka}.} We record this result in the form of the following proposition.

\begin{proposition}\label{prop:mckay}
For every $n$ and every unimodular triangulation $\cT$ of the simplex $n\Delta^r$, the Fourier-Mukai functor $ \Db{(\bA^{r+1}/\mu_n^r)_\cT}\to \Db{[{\bA^{r+1}} \, / \, \mu_n^{r}]}$ is an equivalence of dg-categories.
\end{proposition}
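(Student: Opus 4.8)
The plan is to reduce the statement to Kawamata's form of the derived McKay correspondence for finite abelian subgroups of a torus, as stated in \cite[Theorem 4.2]{Ka}. First I would set up the toric data precisely: the toric variety $\bA^{r+1}/\mu_n^r = \Spec k[x_i,t_n]/(x_1\cdots x_{r+1}-t_n^n)$ is the affine toric variety $X_{\cF_n}$ attached to the first orthant $\sigma = \bR^{r+1}_{\geq 0}$ taken with respect to the lattice $L_n = \{(a_1,\dots,a_{r+1})\in\bZ^{r+1}\mid \sum a_i\equiv 0\ (\mathrm{mod}\ n)\}$. The quotient stack $[\bA^{r+1}/\mu_n^r]$, with $\bA^{r+1}=\Spec k[x_{i,n}]$ and $\mu_n^r$ acting as the kernel of $\mu_n^{r+1}\to\mu_n$, is then precisely the smooth toric (Deligne--Mumford) stack attached to the stacky fan $(\sigma, L_n\hookrightarrow \bZ^{r+1})$: indeed $\bA^{r+1}=\Spec k[x_{i,n}]=X_\sigma$ with lattice $\bZ^{r+1}$, and the kernel of the induced torus map $D((\bZ^{r+1})^\vee)\to D(L_n^\vee)$ is exactly $\mu_n^r$. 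Both $[\bA^{r+1}/\mu_n^r]$ and $(\bA^{r+1}/\mu_n^r)_\cT$ are crepant resolutions of $\bA^{r+1}/\mu_n^r$ (for the stack this is the standard fact that the canonical toric stack over a Gorenstein toric variety is crepant — the ray generators all lie on the hyperplane $\sum a_i = n$ in $L_n$, which is the same as saying the stacky structure is supported on a Gorenstein cone; for the blow-up this is Remark~\ref{rmk:crepant}).

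Next I would invoke Kawamata's theorem. The quotient $\bA^{r+1}/\mu_n^r$ is a quotient of affine space $\bA^{r+1}$ (the one with coordinates $x_{i,n}$) by the finite abelian group $\mu_n^r$ acting through the torus — i.e. a finite subgroup of $(\Gm)^{r+1}$ acting on $\bA^{r+1}$, which is exactly the setting of \cite[Theorem 4.2]{Ka}. Since $G=\mu_n^r$ acts with trivial determinant on each piece (this is what makes $[\bA^{r+1}/\mu_n^r]\to \bA^{r+1}/\mu_n^r$ crepant, and it follows from the Gorenstein/semistable shape of the diagonal map $\bN\to\bN^{r+1}$), Kawamata produces a derived equivalence between $\Db{[\bA^{r+1}/\mu_n^r]}$ and $\Db{W}$ for \emph{any} crepant resolution $W$ of $\bA^{r+1}/\mu_n^r$; in the toric case the crepant resolutions are precisely those toric blow-ups given by unimodular subdivisions of $\cF_n$ refining the face structure, which is exactly what a unimodular triangulation $\cT$ of $n\Delta^r$ encodes (cones over the simplices of $\cT$). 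So one gets a derived equivalence $\Db{(\bA^{r+1}/\mu_n^r)_\cT}\simeq \Db{[\bA^{r+1}/\mu_n^r]}$.

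The last — and I expect main — point is to check that this equivalence can be taken to be the \emph{Fourier--Mukai functor with kernel $\cO_Z$}, where $Z = [\bA^{r+1}/\mu_n^r]\times_{\bA^{r+1}/\mu_n^r}(\bA^{r+1}/\mu_n^r)_\cT$ is the fiber product of the two crepant resolutions over the singular quotient, rather than some other equivalence. For this I would appeal to the structure of Kawamata's and Bridgeland--King--Reid-type arguments: in the toric/abelian setting the comparison of two crepant resolutions $W_1, W_2$ of the same $Y$ is effected precisely by the integral transform along $W_1\times_Y W_2$ with kernel its structure sheaf, which one checks is a complex concentrated in degree zero (equivalently, $W_1\times_Y W_2$ has the expected dimension and the transform of $\cO$ is $\cO$ off a locus of high codimension), and this is where one uses that toric varieties have rational singularities — exactly the kind of input already exploited in the proof of Proposition~\ref{prop:FM.compatible}. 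Concretely: $\mathrm{Tor}$-independence of the two resolutions over $Y$ away from a codimension-$\geq 2$ locus, together with $R(p_1)_*\cO_Z = \cO_{W_1}$ and $R(p_2)_*\cO_Z=\cO_{W_2}$ (rational singularities applied to the maps from a common further resolution, as in Prop.~\ref{prop:FM.compatible}), forces the FM kernel $\cO_Z$ to implement an equivalence — and by uniqueness of the McKay kernel in the torus-equivariant setting it agrees with Kawamata's. The genuine obstacle is thus not the existence of \emph{some} derived equivalence (that is Kawamata), but matching it with the specific push--pull functor $\Phi_{(n,\cT)}$ used throughout the paper; I would handle this by citing the $G$-Hilbert-scheme / toric-flop literature (Kawamata's own proof proceeds by tilting/mutation along such correspondences) and verifying the $\mathrm{Tor}$-independence and $R\pi_*\cO = \cO$ statements toric-locally on the fan $\cF_n$, exactly as in the proof of Proposition~\ref{prop:FM.compatible}.
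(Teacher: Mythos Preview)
Your approach is essentially the same as the paper's: both reduce directly to Kawamata's Theorem 4.2 in \cite{Ka}. However, your third paragraph is unnecessary and reflects a misreading of what Kawamata's theorem actually says. Theorem 4.2 of \cite{Ka} (case (4), with $X=\bA^{r+1}/\mu_n^r$, $Y=(\bA^{r+1}/\mu_n^r)_\cT$, $B=C=0$) does not merely assert the \emph{existence} of some derived equivalence between the two crepant resolutions; it asserts precisely that the push--pull functor $q_*\circ p^*$ along the projections from the (normalized) fiber product $\cW=[\bA^{r+1}/\mu_n^r]\times_{\bA^{r+1}/\mu_n^r}(\bA^{r+1}/\mu_n^r)_\cT$ is an equivalence. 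So there is no separate ``matching'' step, no need to invoke $G$-Hilbert schemes or to reprove $R\pi_*\cO=\cO$ here --- the paper's proof is three lines long for exactly this reason.

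The one genuine point the paper makes that you gloss over is the following: Kawamata's functor is defined via the \emph{normalization} of the scheme-theoretic fiber product, whereas the functor $\Phi$ used throughout this paper is defined via the fiber product in the category of fine saturated log schemes. The paper observes that since everything is toric, these two coincide. You should replace your third paragraph with this one-line observation, together with the remark that $\Db{-}=D^b_{dg}(-)$ here because both sides are smooth and noetherian.
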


\begin{proof}
We use Theorem 4.2 of \cite{Ka}, case (4). Note that here both stacks are smooth and noetherian, so $\Db{-}=D^b_{dg}(-)$ \cite[Corollary 3.0.5]{BNP}. 

In the notation of the paper, we take $X=\bA^{r+1}/\mu_n^r$, where $\mu_n^r\subseteq \mu_n^{r+1}$ as described above, and $Y= (\bA^{r+1}/\mu_n^r)_\cT$, with $B=C=0$. 
%Denote by $\mathcal{W}$
The normalization of the fiber product of  \emph{schemes}
$
\mathcal{W}:=[{\bA^{r+1}} \, / \, \mu_n^{r}]\times_{\bA^{r+1}/\mu_n^r}(\bA^{r+1}/\mu_n^r)_\cT
$
coincides with the fibered product of fine saturated log schemes, since everything is toric. %and consider the diagram 
%$$
%\xymatrix{
%&  \mathcal{W} \ar[ld]_-q  \ar[rd]^-p & \\  [{\bA^{r+1}} \, / \, \mu_n^{r}] 
%& & (\bA^{r+1}/\mu_n^r)_\cT 
%}
%$$
By Theorem 4.2 of \cite{Ka} the functor
$$
q_* \circ p^*: \Db{(\bA^{r+1}/\mu_n^r)_\cT}\to \Db{[{\bA^{r+1}} \, / \, \mu_n^{r}]}
$$
is an equivalence, where $p\colon \mathcal{W} \to (\bA^{r+1}/\mu_n^r)_\cT$ and $q\colon \mathcal{W} \to [{\bA^{r+1}} \, / \, \mu_n^{r}]$ are the projections. This concludes the proof. %All is left  to prove is that $[{\bA^{r+1}} \, / \, \mu_n^{r}]\times_{\bA^{r+1}/\mu_n^r}(\bA^{r+1}/\mu_n^r)_\cT$ is   normal and therefore coincides with $\mathcal{W}$.
%Consider the fiber product
%$$
%\begin{gathered}
%\label{eq}
%\xymatrix{ Z  \ar[r] \ar[d] & (\bA^{r+1}/\mu_n^r)_\cT  \ar[d] \\
%\bA^{r+1} \ar[r]^-p & \bA^{r+1}/\mu_n^r
%}
%\end{gathered}
%$$
%where $p$ is the quotient map. The morphism of fans corresponding to $p$ is given by a finite-index  embedding of lattices. By Lemma 2.2.7 of \cite{Molcho},  
%$Z$ is a toric variety and thus, in particular, $Z$ is normal. 
%Next consider the commutative  diagram
%$$
%\xymatrix{ Z  \ar[r]^-{g}  \ar[d] & [{\bA^{r+1}} \, / \, \mu_n^{r}]\times_{\bA^{r+1}/\mu_n^r}(\bA^{r+1}/\mu_n^r)_\cT \ar[d] \ar[r] & (\bA^{r+1}/\mu_n^r)_\cT \ar[d] \\
%\bA^{r+1} \ar[r]^-h & [{\bA^{r+1}} \, / \, \mu_n^{r}] \ar[r] & \bA^{r+1}/\mu_n^r
%}
%$$
%and note that both the exterior and rightmost square are fiber products, hence also the leftmost square is cartesian.
%Now, since the map $h$ is \' etale, $g$ is 
%\' etale as well. That is, $Z$ is an  
%\' etale cover of $[{\bA^{r+1}} \, / \, \mu_n^{r}]\times_{\bA^{r+1}/\mu_n^r}(\bA^{r+1}/\mu_n^r)_\cT.$ The fact that $Z$ is normal then implies that $[{\bA^{r+1}} \, / \, \mu_n^{r}]\times_{\bA^{r+1}/\mu_n^r}(\bA^{r+1}/\mu_n^r)_\cT$ is also normal, and concludes the proof. 
\end{proof}

This concludes the proof of Theorem \ref{thm:main} in the local case.

\section{Globalizing}\label{sec:globalizing}

In this section we show how the results obtained {in the previous section} in the local case imply the statement of Theorem \ref{thm:main} also for the global case. Consider a log algebraic stack $X$ as in Section \ref{sec:simple.log.ss}. The strategy for proving the theorem will be to construct, as in the previous section, a system of log blow-ups $(X_{k_n})_{\cI_{k_n}}\to X_{k_n}$ (where for $n\in \bN$, we denote by $X_n$ the root stack $\radice[n]{X'}$, see Section \ref{sec:simple.log.ss}) such that
\begin{enumerate}
\item the set $\{k_n\}$ is cofinal in $\bN$ by divisibility, so that $\irs{X}\to \varprojlim_{n} \radice[k_n]{X}$ is an equivalence,
\item the log blow-ups $(X_{k_n})_{\cI_{k_n}}\to X_{k_n}$ are cofinal among log blow-ups of the stacks $X_n$, so that  the induced map $(X_\infty)^\val\to \varprojlim_n (X_{k_n})_{\cI_{k_n}}$ is an equivalence, and
\item the Fourier-Mukai functors $ \Db{(X_{k_n})_{\cI_{k_n}}}\to \Db{\radice[k_n]{X}}$ are equivalences of dg-categories.
\end{enumerate}
After we achieve this, Theorem \ref{thm:main} will be proven.

\begin{remark}
Note that the complication of carefully constructing a cofinal subsystem of log blow-ups is ultimately due, as in the local case (Remark \ref{rmk:r.1}), to non-uniqueness of crepant resolutions for the local models, outside of the case $r=1$.\end{remark}

We want to argue that the log blow-ups that we obtain in the local model, pulled back to $X$ locally where there is a chart $X\to \bA^{r+1}$, will glue together (provided we set things up correctly), and give a system of global log blow-ups of $X$ that has the properties outlined above. For this it is essential that the triangulations that we constructed in the local case (Lemma \ref{lem:sym}) are symmetric with respect to automorphisms of the monoid $\bN^{r+1}$. {In order to keep track of all the compatibilities between various patches on $X$ we will use the technology of  \emph{Artin fans}. We {recall} definitions and basic properties of these objects in the next section. In Section \ref{sec:colocopro}, using subdivisions of Artin fans, we will construct a system of log blow-ups $(X_{k_n})_{\cI_{k_n}}\to X_{k_n}$ satisfying properties $(1)$, $(2)$ and $(3)$ from above. This will conclude the proof of Theorem \ref{thm:main}.}

\subsection{Artin fans and subdivisions}

A convenient way to encode the compatibility of the local log blow-ups is by considering the \emph{Artin fan} $\cA_X$ of $X$, and its subdivisions. Let us recall briefly what these are. For more about Artin fans and subdivisions we refer to reader to \cite{abramovichi,abramovichb,abramovichs}.

Artin fans are log algebraic stacks that are log \'etale over the base field $k$ (or, equivalently, \'etale over Olsson's stack $LOG_k$). All our Artin fans will also have \emph{faithful monodromy}, i.e. the structure map $\cX\to LOG_k$ will also be representable. 
Every fine saturated log scheme (or algebraic stack) $X$ admits an initial morphism $X\to \cA_X\to LOG_k$ to an Artin fan, called the ``Artin fan of $X$''. For example if $X$ is a $T$-toric variety for a torus $T$, then $\cA_X\simeq [X/T]$ for the action giving the toric structure. Artin fans forget all the geometry of a log scheme or algebraic stack, and only retain the information about the combinatorics of the log structure, in an ``algebro-geometric'' manner.

They are perfectly suited for encoding log blow-ups, since these are algebro-geometric operations, but completely determined by the combinatorics. Just as for toric varieties toric blow-ups correspond to subdivisions of the fan, for general fine saturated log stacks, (representable) log blow-ups correspond to ``subdivisions'' of the Artin fan. See the discussion in \cite[Section 2.4]{abramovichi} and \cite[Section 3]{abramovichb} for details.

Given a rational polyhedral cone $\sigma$ in a finite dimensional vector space $N\otimes_\bZ \bR$ (where $N$ is a lattice, dual to $M$), we will denote by $\cA_\sigma:=[\Spec k[\sigma^\vee\cap M]\,/ \,T]$ the Artin fan of the affine toric variety $\Spec k[\sigma^\vee\cap M]$ (here $T$ is the algebraic torus $\Hom(M,\bG_m)$ with character lattice $M$). Artin fans of the form $\cA_\sigma$ are called \emph{Artin cones}.

If $\tau\subseteq \sigma$ is the inclusion of a face, then we naturally have an induced map $\cA_\tau\to \cA_\sigma$, which is a Zariski open immersion. Given a (rational) subdivision $\Sigma$ of the cone $\sigma$, we can then glue together the various stacks $\cA_\tau$ for the cones $\tau \in \Sigma$, and obtain an Artin fan $\cA_\Sigma$ (which can also be described as the global quotient of the toric variety $X_\Sigma$ associated to $\Sigma$ by its torus) with a map $\cA_\Sigma\to \cA_\sigma$. This is called a subdivision of $\cA_\sigma$, and is a log blow-up in the sense of (\ref{sec:valuativization}).

More generally, a morphism of Artin fans $\cA'\to \cA$ is a subdivision if for every Artin cone $\cA_\sigma$ and every morphism of Artin fans (i.e. of log algebraic stacks) $\cA_\sigma \to \cA$, the projection $\cA'\times_\cA \cA_\sigma \to \cA_\sigma$ is isomorphic to a subdivision. Moreover, it is enough to check this for maps $\cA_\sigma\to \cA$ that are part of a fixed \'etale cover of $\cA$.

%\begin{remark}
{We will make use of the following  observation.}  Assume that we have an Artin fan $\cA$, together with an \'etale cover $\{\cA_{\sigma_i}\to \cA\}$ by Artin cones, and for every cone $\sigma_i$ we have a subdivision $\Sigma_i$. Assume also that the subdivisions $\Sigma_i$ are symmetric with respect to automorphisms of $\sigma_i$, and that whenever for some $i$ and $j$ we have a face $\tau$ of both $\sigma_i$ and $\sigma_j$ with a commutative diagram
$$
\xymatrix{
\cA_\tau\ar[r]\ar[d] & \cA_{\sigma_i}\ar[d]\\
\cA_{\sigma_j}\ar[r] & \cA,
}
$$
then the subdivisions $\Sigma_i$ and $\Sigma_j$ coincide when restricted to $\tau$. Then there is a subdivision $\cA_{\Sigma}\to \cA$, that restricts to $\cA_{\Sigma_i}\to \cA_{\sigma_i}$ on the given \'etale cover. This is used also in \cite[Section 3]{abramovichb} (see Section 3.2 in particular).

Here is a sketch of an argument to justify this: 
our Artin fan $\cA$ will have a presentation as the colimit of a diagram $\cR\rightrightarrows \cU$ where both $\cR$ and $\cU$ are disjoint unions $\cU=\bigsqcup_i \cA_{\sigma_i}$ and $\cR=\bigsqcup_j \cA_{\tau_j}$ of Artin cones as above. The subdivisions $\Sigma_i$ and their restriction to the faces $\tau_j$ will give subdivisions $\cU'\to \cU$ and $\cR'\to \cR$, and by our assumptions there is an induced diagram $\cR'\rightrightarrows \cU'$, compatible with $\cR\rightrightarrows \cU$, and whose colimit in \'etale sheaves over $LOG_k$ gives the desired subdivision $\cA_\Sigma$ of $\cA$.
%\end{remark}

Note that compatibility and symmetry of the subdivisions $\Sigma_i$ is fundamental for the above argument. For example, for a fixed cone $\sigma_i$ appearing in $\cU$, in $\cR$ we could have $\cA_{\sigma_i}$ itself, with one of the maps of the groupoid $\cR\rightrightarrows \cU$ restricting to the identity $\cA_{\sigma_i}=\cA_{\sigma_i}$ and the other one restricting to a non-trivial automorphism $\cA_{\sigma_i}{\simeq}\cA_{\sigma_i}$ induced by a non-trivial symmetry of the cone $\sigma_i$. Only thanks to the symmetry of $\Sigma_i$ can we conclude that the groupoid structure will lift to $\cR'$ and $\cU'$.

\subsection{Construction of the log blow-ups $(X_{k_n})_{\cI_{k_n}}$ and conclusion of the proof}
\label{sec:colocopro}

Consider the Artin fan $\cA_X$ of our fixed log 
algebraic stack $X$. By the explicit form of our local charts $X\to \bA^{r+1}$, the Artin fan $\cA_X$ has an \'etale cover by Artin cones of the form $$[\bA^{r+1}/\bG_m^{r+1}]=[\bA^1/\bG_m]\times \cdots \times [\bA^1/\bG_m]$$ for various possible values of $r$. The ``intersections'' of these are Artin cones of the same form, with lower values for $r$.

In order to define a subdivision of $\cA_X$ (or of the Artin fan $\cA_{X_n}$ of the stack $X_n$, that has a similar form), we can specify one subdivision for each one of the cones  corresponding to  the toric varieties $\bA^{r+1}$, provided these are compatible over the (possibly self-)intersections. If the subdivisions are chosen to be symmetric with respect to automorphisms of the monoid $\bN^{r+1}$ and compatible for different values of $r$, these conditions will be automatically satisfied. This is  the reason why we made an effort to construct symmetric unimodular triangulations in Lemma \ref{lem:sym}.

{For clarity, we break down our construction of a suitable cofinal system in the following  three steps: } 
\begin{itemize}
\item {Since $X$ is quasi-compact, there will be a  maximal value for the ranks of the stalks $\overline{M}_{X,x}$ of the sheaf $\overline{M}_X$.\footnote{We are assuming here that $X$ has a non-trivial log structure, and that therefore the maximal rank of the stalks is greater than zero.}    Denote it by $R+1$. Let  $\{(k_n,\cT_{k_n})\}_{n\in\bN}$ be the sequence of positive integers $k_n$ and symmetric unimodular triangulations $\cT_{k_n}$ of $k_n\Delta^{R}$ that was constructed in Section \ref{sec:local}.} 

{Observe that by restricting the sequence of triangulations $\{(k_n,\cT_{k_n})\}_{n\in\bN}$ to any $r$-dimensional face $k_n\Delta^{r}$ of $k_n\Delta^{R}$, we obtain  a cofinal sequence of symmetric rational triangulations of the simplex $\Delta^r$, see Lemma \ref{lem:sym2}. Also note that, since the triangulation $\cT_{k_n}$ is symmetric with respect to permutations of the vertices, the choice of how to realize $k_n\Delta^{r}$ as a face of $k_n\Delta^{R}$ is irrelevant.}

\item Let $n \in \bN$, and consider the stack $X_{k_n}$ and its Artin fan $\cA_{X_{k_n}}$. This has an \'etale cover by Artin cones of the form $\cA_{\sigma^r_{k_n}}$, where for $m \in \bN$ the cone $\sigma^r_m$ is the cone of the toric variety $\bA^{r+1}/\mu_m^r$ of Section \ref{sec:local}, given by the first orthant $\bR_{\geq 0}^{r+1}$ in $\bR^{r+1}$, with respect to the lattice $L_m=\{(a_1,\hdots, a_{r+1})\in \bZ^{r+1} \mid a_1+\cdots +a_{r+1}\equiv 0\; (\mathrm{mod} \; m)\}$. We stress that here $r$ is not fixed, but can take any value from $0$ to $R$.

We define a subdivision of $\cA_{X_{k_n}}$, by subdividing each $\cA_{\sigma^r_{k_n}}$ according to the triangulation $\cT_{k_n}$. Since these triangulations are symmetric and compatible with restriction to faces (by construction), this will define a global subdivision of $\cA_{X_{k_n}}$, that we denote by $\cA_{\cT_{k_n}}\to \cA_{X_{k_n}}$. 
\item By pulling this back along $X_{k_n}\to \cA_{X_{k_n}}$, we obtain a log blow-up $(X_{k_n})_{\cT_{k_n}}\to X_{k_n}$. Moreover, if $n\leq n'$ we  have a map $(X_{k_{n'}})_{\cT_{k_{n'}}}\to (X_{k_{n}})_{\cT_{k_n}}$ covering $X_{k_{n'}}\to X_{k_n}$, so that $\{(X_{k_{n}})_{\cT_{k_n}}\}_{n \in \bN}$ is an inverse system of log blow-ups of $\{X_{k_n}\}_{n\in\bN}$.  
\end{itemize}

{To conclude the proof we need to check that 
$\{(X_{k_{n}})_{\cT_{k_n}}\}_{n \in \bN}$ satisfies the properties $(1)$, $(2)$ and $(3)$ listed at the beginning of this section. Note that property  $(1)$ is clearly satisfied. Indeed, since $\{k_n\}$ is cofinal in $\bN$ by divisibility, the natural map $\irs{X}\to \varprojlim_n \radice[k_n]{X}$ is an equivalence. As a consequence $\varinjlim_n \Db{\radice[k_n]{X}}\to \Db{\irs{X}}$ is an equivalence of dg-categories. Proposition \ref{prop:global.iso.val} below shows that property $(2)$ also holds.}

\begin{proposition}\label{prop:global.iso.val}
We have a natural isomorphism $X_\infty^\val\simeq \varprojlim_n (X_{k_{n}})_{\cT_{k_n}}$, that induces an equivalence of dg-categories $\Db{X_\infty^\val}\simeq \varinjlim_n \Db{(X_{k_{n}})_{\cT_{k_n}}}$.
\end{proposition}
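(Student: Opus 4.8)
The plan is to show that the inverse system $\{(X_{k_n})_{\cT_{k_n}}\}_{n\in\bN}$ is a cofinal subsystem inside the inverse system of all log blow-ups of the stacks $\{X_n\}_{n\in\bN}$, from which the isomorphism $X_\infty^\val\simeq \varprojlim_n (X_{k_n})_{\cT_{k_n}}$ will follow by Remark \ref{rmk:cofinal.subsystem} (applied to the valuativization of $\irs{X'}=X_\infty$, recalling that $X_\infty^\val$ is by definition the limit over the index system $I=\{(n,\cI_n^\alpha)\}$ of the log blow-ups $(X_n)_{\cI_n^\alpha}$). The induced equivalence of dg-categories $\Db{X_\infty^\val}\simeq \varinjlim_n \Db{(X_{k_n})_{\cT_{k_n}}}$ is then immediate from Definition \ref{properf} together with the fact that a cofinal subsystem computes the same colimit.

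First I would reduce cofinality to a local (i.e. étale-local on $X$) statement. Given an arbitrary index $(n,\cI_n^\alpha)\in I$, i.e. a log blow-up $(X_n)_{\cI_n^\alpha}\to X_n$, I need to produce some $k_m$ with $n\mid k_m$ and a factorization $(X_{k_m})_{\cT_{k_m}}\to (X_n)_{\cI_n^\alpha}$ over $X_n$ (after base change along $X_{k_m}\to X_n$, which exists since $n\mid k_m$). Since log blow-ups are encoded by subdivisions of the Artin fan, and the Artin fan $\cA_{X_n}$ has an étale cover by Artin cones $\cA_{\sigma^r_n}$ with $0\le r\le R$ exactly as described in the construction above, the sheaf of ideals $\cI_n^\alpha$ corresponds to a subdivision $\Sigma_i$ of each cone $\sigma^r_n$ in this cover, compatible over intersections. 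Now I invoke the local analysis of Section \ref{sec:local}: each $\Sigma_i$, intersected with the hyperplane slice, gives a rational polyhedral subdivision of the simplex $n\Delta^r$, and by cofinality of the sequence $\{(k_m,\cT_{k_m})\}$ among all rational triangulations of $\Delta^r$ (Section \ref{sec:cooftheco}, using Lemma \ref{lem:sym2} to control the restrictions to faces), there is an $m_0$ so that for all $m\ge m_0$ the symmetric unimodular triangulation $\cT_{k_m}$ refines $\Sigma_i$ on every cone of the cover simultaneously — here $m_0$ can be taken uniform since $X$ is quasi-compact and there are only finitely many cones in the cover up to the symmetries built into $\cT_{k_m}$. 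Refinement of subdivisions gives, by the universal property of log blow-ups (or of toric blow-ups on Artin cones, then glued), the desired factorization $\cA_{\cT_{k_m}}\to \cA_{\Sigma_i\text{-glued}}$ over $\cA_{X_n}$, and pulling back along $X_{k_m}\to \cA_{X_{k_m}}\to \cA_{X_n}$ yields $(X_{k_m})_{\cT_{k_m}}\to (X_n)_{\cI_n^\alpha}$.

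The main obstacle I anticipate is the gluing: making sure that the local refinements assemble into an honest morphism of Artin fans over $\cA_{X_n}$, rather than just a collection of compatible-looking local maps. This is precisely where the symmetry of the triangulations $\cT_{k_n}$ with respect to permutations of the vertices of $k_n\Delta^r$, together with their compatibility with passing to faces (Lemma \ref{lem:sym2}), is essential — exactly as in the construction paragraph above and as in \cite[Section 3.2]{abramovichb}: the two restriction maps in a groupoid presentation $\cR\rightrightarrows \cU$ of $\cA_{X_n}$ can differ by a nontrivial automorphism of a cone, and only the symmetry of $\cT_{k_m}$ guarantees that the refinement is preserved by such automorphisms, so that the subdivided groupoid $\cR'\rightrightarrows \cU'$ still makes sense and has colimit a well-defined subdivision of $\cA_{X_n}$. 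Once this is in place, the factorization over $X_n$ follows formally, cofinality is established, and the proposition follows. I would close by remarking that the identification of the limit with $X_\infty^\val$ uses Proposition \ref{prop:diagonal.valuativization} applied to $X_\infty=\irs{X'}$, which is legitimate since $X'$ is a fine saturated log algebraic stack.
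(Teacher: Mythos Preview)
Your proposal is correct but takes a genuinely different route from the paper's proof. The paper argues as follows: since each $(X_{k_n})_{\cT_{k_n}}$ is a log blow-up of $X_{k_n}$, there is a canonical map $X_\infty^\val \to \varprojlim_n (X_{k_n})_{\cT_{k_n}}$; to check it is an isomorphism the paper localizes on $X$ to a small log scheme with a flat chart $X \to \bA^{r+1}$, so that the Artin fan of $X_m$ is identified with the single Artin cone $\cA_{\sigma_m^r}$ and one has compatible isomorphisms $(X_{k_n})_{\cT_{k_n}} \simeq X_{k_n} \times_{\bA^{r+1}_{k_n}} (\bA^{r+1}_{k_n})_{\cT_{k_n}}$; passing to the limit gives $X_\infty \times_{\bA^{r+1}_\infty} (\bA^{r+1}_\infty)^\val$, which equals $(X_\infty)^\val$ by compatibility of valuativization with strict base change (Remark \ref{rmk:val.base.change}). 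You instead establish cofinality of $\{(k_n,\cT_{k_n})\}$ directly at the global level, by translating an arbitrary log blow-up of $X_n$ into a subdivision of the Artin fan $\cA_{X_n}$, refining it chart by chart via the local cofinality of Section \ref{sec:local}, and gluing these refinements using the symmetry of $\cT_{k_m}$. Your approach is more constructive and makes explicit why the symmetry of the triangulations is needed for the \emph{statement} (not just for the construction of the $(X_{k_n})_{\cT_{k_n}}$); the paper's approach is shorter and more conceptual, bypassing the global gluing step entirely by reducing to the local case and invoking the strict-base-change property of valuativizations.
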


\begin{proof}
Since every $(X_{k_{n}})_{\cT_{k_n}}$ is a log blow-up of $X_{k_n}$, we have a canonical map $(X_\infty)^\val\to  \varprojlim_n (X_{k_{n}})_{\cT_{k_n}}$. In order to check that this is an isomorphism we can localize on $X$ where we have a chart $X\to \bA^{r+1}$ for the log structure. We can also assume that the log structure of $X$ is Zariski, and that the locus in $X$ where the stalk $\overline{M}_{X,x}$ has maximal rank is irreducible (so that $X$ is a \emph{small} logarithmic scheme, see for example \cite[Definition 5.6]{abramovichs}).

Under these conditions, for every $m\in \bN$ the induced strict map $X_m\to \bA^{r+1}_m$ identifies the Artin fan $\cA_{X_m}$ with the Artin cone $\cA_{\sigma_m^r}$ (where $\sigma_m^r$ is the cone of the toric variety $\bA^{r+1}/\mu_m^r$, as in the discussion above). By construction of the log blow-ups $(X_{k_n})_{\cT_{k_n}}$ we have canonical (and compatible) isomorphisms
$$
(X_{k_n})_{\cT_{k_n}}\simeq X_{k_n}\times_{\bA^{r+1}_{k_n}} (\bA^{r+1}_{k_n})_{\cT_{k_n}}
$$
for every $n$. Therefore for the limit we find
$$
\varprojlim_n (X_{k_n})_{\cT_{k_n}}\simeq \varprojlim_n (X_{k_n}\times_{\bA^{r+1}_{k_n}} (\bA^{r+1}_{k_n})_{\cT_{k_n}})\simeq X_\infty\times_{\bA^{r+1}_\infty} (\bA^{r+1}_\infty)^\val
$$
and since forming valuativizations is also compatible with strict base change (Remark \ref{rmk:val.base.change}), the last term is canonically isomorphic to $(X_\infty)^\val$. From this it follows that the natural map $(X_\infty)^\val\to  \varprojlim_n (X_{k_{n}})_{\cT_{k_n}}$ is an isomorphism, as we wanted.
\end{proof}

{The last thing left to prove is that property $(3)$ holds for $\{(X_{k_{n}})_{\cT_{k_n}}\}_{n \in \bN}$. That is, we have to show that  the Fourier-Mukai functors 
$
\Db{(X_{k_n})_{\cT_{k_n}}}\to \Db{\radice[k_n]{X}} 
$  are equivalences. We prove first a couple of lemmas.}

\begin{lemma}\label{lem:fibered.prod}
Let
$$
\xymatrix{
Y'\ar[r]\ar[d] & Y\ar[d]\\
X'\ar[r] & X
}
$$
be a cartesian diagram of perfect algebraic stacks (in the sense of \cite {BFN}), where the map $X'\to X$ is flat. Then the natural functor of dg-categories
$$
\Db{Y}\otimes_{\Db{X}} \Db{X'} \to \Db{Y'}
$$
is an equivalence.
\end{lemma}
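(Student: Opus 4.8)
The plan is to use the fact that for perfect stacks, the derived category of quasi-coherent sheaves behaves well under base change, and then cut down to perfect complexes. First I would recall from \cite{BFN} that if $X$ is a perfect stack then $\Qcoh(X)$ is a compactly generated symmetric monoidal dg-category whose compact objects are exactly $\Perf(X)$, and that for a map $X'\to X$ of perfect stacks the pullback functor corresponds under the equivalence $\Qcoh(X)\simeq \mathrm{Ind}(\Perf(X))$ to $-\otimes_{\Qcoh(X)}\Qcoh(X')$. The key input is the base-change statement for perfect stacks (Theorem 4.7 and Proposition 4.8 of \cite{BFN}, or the discussion in its \S3): for a cartesian square of perfect stacks as in the statement, the natural functor
$$
\Qcoh(Y)\otimes_{\Qcoh(X)}\Qcoh(X')\to \Qcoh(Y')
$$
is an equivalence of symmetric monoidal dg-categories. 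This holds without any flatness hypothesis because the fiber product in \cite{BFN} is the derived one; our extra assumption that $X'\to X$ is flat is precisely what guarantees that the classical fiber product $Y'$ appearing in the diagram agrees with the derived fiber product, so that the result applies to our (non-derived) $Y'$.

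Next I would pass from $\Qcoh$ to $\Perf$. The tensor product $\Db{Y}\otimes_{\Db{X}}\Db{X'}$ is a tensor product of small idempotent-complete stable dg-categories; taking $\mathrm{Ind}$ and using that $\mathrm{Ind}$ is symmetric monoidal for the relevant tensor structures gives
$$
\mathrm{Ind}\bigl(\Db{Y}\otimes_{\Db{X}}\Db{X'}\bigr)\simeq \Qcoh(Y)\otimes_{\Qcoh(X)}\Qcoh(X')\simeq \Qcoh(Y').
$$
Under this identification the image of $\Db{Y}\otimes_{\Db{X}}\Db{X'}$ consists of objects of the form $p^*F\otimes q^*G$ (and retracts of finite colimits thereof) with $F\in\Db{Y}$, $G\in\Db{X'}$; since pullbacks of perfect complexes along the (derived) maps $Y'\to Y$ and $Y'\to X'$ are perfect, and $\Perf(Y')$ is generated under shifts, cones and retracts by such tensor products (this uses that $\Perf(X')$ generates $\Qcoh(X')$ and the projection formula), the essential image is exactly $\Perf(Y')$. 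Hence the functor $\Db{Y}\otimes_{\Db{X}}\Db{X'}\to\Db{Y'}$ is a fully faithful, essentially surjective, and therefore an equivalence. Equivalently one can phrase this as: the functor is automatically fully faithful because it is the restriction to compact objects of the equivalence on $\mathrm{Ind}$-completions, and essential surjectivity is the generation statement just described.

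The main obstacle I expect is the bookkeeping needed to verify that all the stacks in play are perfect in the sense of \cite{BFN}, and that the square remains cartesian with the correct (derived) meaning after the flatness reduction — in other words, checking the hypotheses rather than the core argument. The stacks arising in the proof of Theorem \ref{thm:main} are quotients of (possibly singular) toric varieties by tori and their pullbacks, which are QCA and perfect, but one should state this cleanly, perhaps invoking that quotient stacks $[V/G]$ with $G$ affine acting on a quasi-projective $V$ over $k$ of characteristic zero are perfect. A secondary subtlety is that the tensor product of dg-categories here must be the Morita (idempotent-completed) one, matching the conventions of Section \ref{sec:dg.categories}, so that ``image'' means ``thick subcategory generated by'', and one must make sure the generation-by-tensor-products claim is stated for the idempotent completion. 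Once these points are pinned down, the proof is the short formal argument above.
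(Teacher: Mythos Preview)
Your approach is correct and essentially the same as the paper's: the paper also invokes Theorem 4.7 of \cite{BFN} for the $\Qcoh$ statement and then restricts to compact objects to get the statement for $\Perf$. The only difference is that the paper is much terser---rather than spelling out the $\mathrm{Ind}$/compact-object bookkeeping you describe, it simply cites the introduction of \cite{BNP} for the passage from the equivalence on $\Qcoh$ to the equivalence on $\Perf$, and does not discuss the flatness/derived-fiber-product issue or the perfectness hypotheses at this point (these are handled elsewhere in the argument).
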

\begin{proof}
The statement in the case of quasi-coherent sheaves is proved in Theorem 4.7 of \cite{BFN}. This restricts to an equivalence of categories between compact objects, i.e. perfect complexes, as explained in the introduction of \cite{BNP}. 
\end{proof}

\begin{lemma}
Let $Y\to  X$, $Z \to X$ and $f\colon X' \to X$ be morphisms of perfect algebraic stacks, and assume that $f$ is flat. Let $Y'$ and $Z'$ be the fiber  products $Y\times_X X'$ and $Z\times_X X'$. Denote by $$\Phi\colon \Db{Y}\to \Db{Z}$$ the Fourier-Mukai functor defined by the structure sheaf of $Y\times_X Z$, and by $$\Phi'\colon \Db{Y'}\to \Db{Z'}$$ the one defined by the structure sheaf of $Y'\times_{X'} Z'$.

Then the functor $\Phi\otimes \id_{ \Db{X'} } \colon \Db{Y'}\to \Db{Z'}$ induced by $\Phi$ and the equivalences of dg-categories
$$
\Db{Y'} \stackrel{\simeq} \to \Db{Y}\otimes_{\Db{X}} \Db{X'} 
$$
and
$$ 
\Db{Z'} \stackrel{\simeq}\to 
\Db{Z}\otimes_{\Db{X}} \Db{X'} 
$$
of the previous lemma, is equivalent to the Fourier-Mukai functor $\Phi'$.
\end{lemma}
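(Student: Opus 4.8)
The plan is to reduce the statement to a comparison on a generating set of $\Db{Y'}$, using the identification of the previous lemma together with flat base change and the projection formula.

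I would start from the elementary observation that, since $Y'=Y\times_X X'$ and $Z'=Z\times_X X'$, there is a canonical identification $Y'\times_{X'}Z'\simeq (Y\times_X Z)\times_X X'$. Denote by $p\colon Y\times_X Z\to Y$, $q\colon Y\times_X Z\to Z$, $p'\colon Y'\times_{X'}Z'\to Y'$, $q'\colon Y'\times_{X'}Z'\to Z'$ the projections, by $g\colon Y'\times_{X'}Z'\to Y\times_X Z$ the natural map, and by $a\colon Y'\to Y$, $b\colon Y'\to X'$, $c\colon Z'\to Z$, $d\colon Z'\to X'$ the projections of the base-change squares. Then the diagrams
$$
\xymatrix@=.6cm{
Y'\times_{X'}Z'\ar[r]^-{g}\ar[d]_{p'} & Y\times_X Z\ar[d]^{p}\\
Y'\ar[r]^{a} & Y
}
\qquad\qquad
\xymatrix@=.6cm{
Y'\times_{X'}Z'\ar[r]^-{g}\ar[d]_{q'} & Y\times_X Z\ar[d]^{q}\\
Z'\ar[r]^{c} & Z
}
$$
are cartesian, the right-hand one being the base change of $f$ along $Z\to X$. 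In particular $c$ and $g$ are flat and all the stacks appearing are perfect, so the derived base change and projection formulas of \cite[Corollary 1.4.5]{drinfeld} apply.

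Next I would invoke the previous lemma (Theorem 4.7 of \cite{BFN}): under the equivalence $\Db{Y'}\simeq \Db{Y}\otimes_{\Db{X}}\Db{X'}$, the category $\Db{Y'}$ is generated, under finite colimits and retracts, by the objects $a^*E\otimes b^*F$ with $E\in\Db{Y}$ and $F\in\Db{X'}$, and the functor $\Phi\otimes\id_{\Db{X'}}$ sends $a^*E\otimes b^*F$ to $c^*\Phi(E)\otimes d^*F$. Since $\Phi\otimes\id_{\Db{X'}}$ and $\Phi'$ are both exact, it suffices to exhibit a natural equivalence between their values on such objects. For this I would compute
$$
\Phi'(a^*E\otimes b^*F)=q'_*\bigl(p'^*a^*E\otimes p'^*b^*F\bigr)=q'_*\bigl(g^*p^*E\otimes q'^*d^*F\bigr),
$$
using $a\circ p'=p\circ g$ and $b\circ p'=d\circ q'$ (both equal to the structure morphism of $Y'\times_{X'}Z'$ over $X'$ in the second case), then apply the projection formula to get $q'_*(g^*p^*E)\otimes d^*F$, and finally flat base change along the right-hand square above to rewrite $q'_*g^*p^*E\simeq c^*q_*p^*E=c^*\Phi(E)$. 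This yields $\Phi'(a^*E\otimes b^*F)\simeq c^*\Phi(E)\otimes d^*F$, which is exactly the value of $\Phi\otimes\id_{\Db{X'}}$; all the isomorphisms used are natural, so they assemble into a natural equivalence $\Phi'\simeq\Phi\otimes\id_{\Db{X'}}$.

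The only non-formal ingredients are the derived base change and projection formula isomorphisms, which are legitimate here because the stacks involved are perfect (equivalently QCA) and the relevant maps are flat — precisely the setting already used above via \cite[Corollary 1.4.5]{drinfeld}. The main point requiring care is purely the bookkeeping of the projections from the various fiber products: one must check that the two squares displayed above are genuinely cartesian (with the right-hand one a base change of $f$) and that $a\circ p'=p\circ g$ and $b\circ p'=d\circ q'$, after which the chain of canonical isomorphisms goes through mechanically. Alternatively, one can phrase the whole argument without generators, by observing that $p^*$ base-changes along $X'\to X$ to $p'^*$ (compatibility of pullback with base change) and that $q_*$, being a colimit-preserving $\Qcoh(X)$-linear functor between categories of quasi-coherent sheaves on perfect stacks, base-changes to $q'_*$, so that $\Phi\otimes\id_{\Db{X'}}=(q_*\otimes\id)\circ(p^*\otimes\id)=q'_*\circ p'^*=\Phi'$.
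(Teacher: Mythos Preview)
Your argument is correct. The paper's own proof is considerably terser: it simply asserts that $\Phi\otimes\id_{\Db{X'}}$ is itself a pull--push Fourier--Mukai functor along the span
\[
Y' \longleftarrow (Y\times_X Z)\times_X X' \longrightarrow Z',
\]
and then observes (exactly as you do) that $(Y\times_X Z)\times_X X' \simeq Y'\times_{X'}Z'$, so this span coincides with the one defining $\Phi'$. Your ``alternative'' paragraph at the end is essentially this argument. Your main approach via generators, the projection formula, and flat base change is a more explicit unpacking of what the paper leaves as a one-line assertion; it has the virtue of making visible exactly where flatness of $f$ and the QCA/perfectness hypotheses enter, at the cost of some extra bookkeeping. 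Either route is fine.
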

\begin{proof}
Let us write the functor $\Phi\otimes \id$ as a Fourier-Mukai transform encoded {via pull-push} in a span diagram. The span diagram is given by 
$$
\xymatrix{&(Y \times_X Z) \times_X X' \ar[ld]_-{q_1} \ar[rd]^-{q_2} & \\
Y'&&Z'}
$$
where $q_1$ and $q_2$ are the composites
$$
q_1:(Y \times_X Z) \times_X X' \to Y\times_X X' = Y', \quad  
q_2: (Y \times_X Z) \times_X X' \to Z\times_X X'= Z' , 
$$
and there is a natural equivalence
$
  \Phi \otimes \id \simeq    q_{2*}\circ q_1^*    .
$
Then the lemma follows immediately from the fact that fiber products commute with fiber products, namely
\belowdisplayskip=-14pt
$$
(Y \times_X Z) \times_X X' \simeq 
(Y \times_X X') \times_{X \times_X X'} (Z \times_X X') \simeq Y' \times_{X'} Z'. 
$$
\end{proof}
We are now ready to show 
that  $\{(X_{k_{n}})_{\cT_{k_n}}\}_{n \in \bN}$ satisfies also property $(3).$ 

\begin{proposition}\label{prop:fm.local}
Let $X$ be as in Theorem \ref{thm:main}, and {let $\{(X_{k_n})_{\cT_{k_n}}\}_{n \in \bN}$ be as {in the discussion} above.} Then for every $n \in \bN$, the Fourier-Mukai functor
$$
\Phi_{(n_k, \cT_{k_n})}\colon \Db{(X_{k_{n}})_{\cT_{k_{n}}}}\to \Db{\radice[k_n]{X}}
$$
is an equivalence of dg-categories.
\end{proposition}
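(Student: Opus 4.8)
The plan is to reduce the global statement to the local statement (Proposition~\ref{prop:mckay}) via \'etale descent and the flat-base-change compatibility of Fourier--Mukai functors established in the two lemmas immediately preceding. First I would observe that the question is \'etale-local on the base. More precisely, since $X$ has locally free log structure, it admits an \'etale cover by charts $X\to \bA^{r+1}$, and the Artin fan $\cA_X$ is covered by the corresponding Artin cones. The dg-category $\Db{-}$ satisfies faithfully flat (in particular \'etale) descent, as recalled in Section~\ref{sec:dg.categories}, and the Fourier--Mukai functor $\Phi_{(k_n,\cT_{k_n})}$, being defined by pull-push along a span, is compatible with this descent: this is precisely the content of Lemma~\ref{lem:fibered.prod} and the lemma following it, using that the chart morphisms are flat (which is where log flatness of $X$ is used).

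The key steps, in order, are as follows. \emph{Step 1:} Fix $n$ and pass to an \'etale cover $\{U_i\to \radice[k_n]{X}\}$ pulled back from an \'etale cover of $\cA_{X_{k_n}}$ by Artin cones $\cA_{\sigma^r_{k_n}}$ (with $r$ ranging from $0$ to $R$). By construction of $(X_{k_n})_{\cT_{k_n}}$ as the pullback of the subdivision $\cA_{\cT_{k_n}}\to\cA_{X_{k_n}}$, the whole span diagram defining $\Phi_{(k_n,\cT_{k_n})}$ base-changes compatibly along this cover. \emph{Step 2:} Apply the second lemma above to conclude that, over each chart, $\Phi_{(k_n,\cT_{k_n})}$ is identified (after the equivalences of Lemma~\ref{lem:fibered.prod}) with the Fourier--Mukai functor of the corresponding \emph{local} span. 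Here I would invoke that the formation of $X_n$, of the log blow-up, and of the fibered product $Y_{(k_n,\cT_{k_n})}$ are all compatible with the strict base change $X\to\bA^{r+1}$, so that over the chart the span diagram is literally the pullback of the local one for $\bA^{r+1}_{k_n}$. \emph{Step 3:} Over each chart, further descend along the \'etale atlas $\bA^1\to\radice[k_n]{\bA^1}$ exactly as in the local proof, reducing to the span
$$
\xymatrix{
& [\bA^{r+1}/\mu_{k_n}^r]\times_{\bA^{r+1}/\mu_{k_n}^r}(\bA^{r+1}/\mu_{k_n}^r)_{\cT_{k_n}}\ar[ld]\ar[rd] & \\
[\bA^{r+1}/\mu_{k_n}^r]\ar[rd] & & (\bA^{r+1}/\mu_{k_n}^r)_{\cT_{k_n}}\ar[ld]\\
& \bA^{r+1}/\mu_{k_n}^r &
}
$$
and then apply Proposition~\ref{prop:mckay} (Kawamata's derived McKay correspondence) to see that this local Fourier--Mukai functor is an equivalence. \emph{Step 4:} Since being an equivalence of dg-categories is an \'etale-local property (again by descent for $\Db{-}$, and the compatibility of the functors with restriction), and the local functors are equivalences, $\Phi_{(k_n,\cT_{k_n})}$ is an equivalence. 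As a byproduct this also justifies, as promised in the earlier remark, that $\Phi_{(k_n,\cT_{k_n})}$ genuinely preserves perfect complexes.

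The main obstacle I expect is \emph{Step 2}: checking carefully that the local subdivisions, which were built from the symmetric triangulations $\cT_{k_n}$ of Lemma~\ref{lem:sym}, actually glue to a global subdivision $\cA_{\cT_{k_n}}\to\cA_{X_{k_n}}$ compatibly with the \'etale cover, so that the global span diagram restricts on charts to the local one. This is exactly where the symmetry of the triangulations under permutations of the vertices of $k_n\Delta^R$, and their compatibility with restriction to faces (Lemma~\ref{lem:sym2}), are indispensable: without symmetry the groupoid relations of the presentation of $\cA_{X_{k_n}}$ would not lift to the subdivided atlas. Once the gluing is in place, the identification of the global and local spans is a formal consequence of compatibility of root stacks, log blow-ups, and fibered products with strict base change, and the rest follows from descent and Proposition~\ref{prop:mckay}. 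A secondary, more technical point is to ensure that the relevant stacks are ``perfect'' (in the sense of \cite{BFN}) so that Lemma~\ref{lem:fibered.prod} applies; this holds because they are QCA stacks of the kind appearing throughout the paper, so that the base-change and projection formulas of \cite{drinfeld} are available.
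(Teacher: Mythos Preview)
Your proposal is correct and follows essentially the same route as the paper: reduce by \'etale descent to the situation where $X$ has a flat chart $X\to\bA^{r+1}$, then use Lemma~\ref{lem:fibered.prod} and the lemma after it (with the flat map $X_{k_n}\to\bA^{r+1}_{k_n}$) to identify $\Phi_{(k_n,\cT_{k_n})}$ with the base change of the local Fourier--Mukai functor for $\bA^{r+1}$, which is an equivalence by Section~\ref{sec:local}. One small remark: the ``main obstacle'' you anticipate in Step~2---the gluing of the symmetric subdivisions into a global subdivision of $\cA_{X_{k_n}}$---is not part of the proof of this proposition at all, but was already carried out in the construction of $(X_{k_n})_{\cT_{k_n}}$ preceding it; the paper's proof simply takes this as given and records at the end that it is essential that the global blow-up was built by globalizing the local one.
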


\begin{proof}
By descent for $\Db{-}$ and the assumptions, we can assume that we have a flat chart $X\to \bA^{r+1}$ for the log structure and that $X$ is an affine scheme.

For every $n\in \bN$, the map $X_{k_n}\to \bA^{r+1}_{k_n}$ is then a strict and flat (since it is a base change of $X\to \bA^{n+1}$, that is flat by assumption) morphism of log algebraic stacks. By the previous lemmas, since $(X_{k_n})_{\cT_{k_n}}$ and $\radice[k_n]{X}$ are both base change of the corresponding objects over $\bA^{r+1}_{k_n}$, the Fourier-Mukai functor $\Phi_{(n_k, \cT_{k_n})}$ is base change of the Fourier-Mukai functor $\Db{(\bA^{r+1}_{k_n})_{\cT_{k_n}}}\to \Db{\radice[k_n]{\bA^{r+1}}}$, which is an equivalence by the results of Section \ref{sec:local} (in particular Proposition \ref{prop:mckay}). This concludes the proof.

Note that it is essential in this argument that the global log blow-up $(X_{k_n})_{\cT_{k_n}}\to X_{k_n}$ is constructed by globalizing the local construction of Section \ref{sec:local}.
\end{proof}

This concludes the proof of Theorem \ref{thm:main} in the general case.

\section{Simple log semistable families}\label{sec:simple.log.sss}

In this final section we will assume that $X$ is the total space of a simple kind of semistable degeneration over a base $S$. We will show that in this situation, if the base $S$ is log smooth, the derived equivalence of Theorem \ref{thm:main} restricts to the fibers of this degeneration.

This applies in particular to the version of the McKay correspondence that was used in Section \ref{sec:local}: the toric variety $\bA^{r+1}$ can be equipped with the map $\phi_r\colon \bA^{r+1}\to \bA^1$ sending $(x_1,\hdots,x_{r+1})$ to $x_1\cdots x_{r+1}$, and it becomes the total space of a flat degeneration of $\bG_m^r$ to the union of the coordinate hyperplanes in $\bA^{r+1}$.

The ``finite index'' statement in this case is that the derived equivalence of Proposition \ref{prop:mckay} restricts to a derived equivalence on the fibers over $0\in \bA^1$. This was already observed in \cite{BP}, and in fact we will leverage the results of that paper in our proof of Theorem \ref{thm:main1} below.

\begin{definition}\label{def:simple.log.ss.1}
Let $X, S$ be fine saturated log algebraic stacks over $k$. A \emph{simple log semistable morphism} is a vertical log smooth morphism $f\colon X\to S$ such that for every geometric point $x\to X$ with image $s=f(x)\to S$ on which the log structure of $S$ is not trivial, there are isomorphisms $\overline{M}_{S,s}\cong \bN$ and $\overline{M}_{X,x}\cong \bN^{r+1}$ with $r\geq 0$, such that the map $\overline{M}_{S,s}\to \overline{M}_{X,x}$ is identified with the $r$-fold diagonal map $\bN\to \bN^{r+1}$.
\end{definition}

\begin{remark}
Recall that a morphism of log algebraic stacks $X\to S$ is vertical if for every geometric point $x\to X$ with image $s\to S$ the cokernel of the homomorphism $\overline{M}_{S,s}\to \overline{M}_{X,x}$ is a group. The verticality condition implies that the locus of $X$ where the log structure is trivial is the preimage of the corresponding locus of $S$. Note also that the cokernel of the map $\bN\to \bN^{r+1}$ is a group (isomorphic to $\bZ^r$).
\end{remark}

\begin{remark}
Compare with the definition of ``essentially semistable'' morphisms in \cite[Definition 2.1]{olsson2}, where there can be extra factors $\bN$ in the log structure of the total space and the base, and on which $f$ acts as the identity, pointwise. 
\end{remark}

\begin{lemma}\label{def:simple.log.ss}
Let $X,S$ be fine saturated log algebraic stacks over $k$.  A morphism $f\colon X\to S$ is a simple log semistable morphism if and only if strict-smooth locally on $X$ and $S$ around points where the log structures are non-trivial there exists a diagram
$$
\xymatrix{
X\ar[rd]\ar@/_1pc/[rdd]_f & &\\
& S'\ar[r]\ar[d] & \bA^{r+1} \ar[d]^{\phi_r} \\
& S\ar[r] & \bA^1
}
$$
where the square is cartesian, the horizontal maps are strict, the diagonal map is strict and smooth, and $\phi_r\colon \bA^{r+1}\to \bA^1$ is the toric morphism induced by the $(r+1)$-fold diagonal map $\bN\to \bN^{r+1}$ (as in Section \ref{sec:local}).

If moreover $S$ is log smooth, then there locally exists a diagram as above, and such that the horizontal maps are also smooth.
\end{lemma}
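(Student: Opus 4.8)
The plan is to deduce both implications from Kato's local structure theorem for log smooth morphisms, reducing everything to the toric model $\phi_r\colon \bA^{r+1}\to \bA^1$.

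The \emph{if} direction is the easy one. Given a diagram as in the statement on a strict-smooth neighbourhood of a point $x\to X$ with non-trivial log structure and image $s=f(x)$, the morphism $f$ factors as $X\to S'\to S$, where $X\to S'$ is strict and smooth and $S'\to S$ is the base change of $\phi_r$. Since $\phi_r$ is log smooth and vertical --- the cokernel of the diagonal $\bN\to \bN^{r+1}$, $1\mapsto(1,\dots,1)$, is the group $\bZ^r$ --- so is $f$, because log smoothness and verticality are stable under strict-smooth composition and under base change. Strictness of the horizontal maps $S\to \bA^1$ and of $X\to \bA^{r+1}$ (obtained as the composite $X\to S'\to \bA^{r+1}$) then identifies $\overline{M}_{S,s}\cong \bN$, $\overline{M}_{X,x}\cong \bN^{r+1}$, and the induced map between them with the diagonal, so $f$ is simple log semistable.

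For the \emph{only if} direction I would first produce charts realising the combinatorial picture on the nose. Fix $x\to X$ with non-trivial log structure and $s=f(x)$, so $\overline{M}_{S,s}\cong \bN$, $\overline{M}_{X,x}\cong \bN^{r+1}$ and the induced map is the diagonal. Since $\bN$ and $\bN^{r+1}$ are free, I can choose charts $\bN\to M_S$ near $s$ and $\bN^{r+1}\to M_X$ near $x$ which are neat, i.e. induce isomorphisms on the stalks of $\overline{M}$; the composite $\bN\to M_S\to f^{*}M_X$ and the diagonal composite $\bN\to \bN^{r+1}\to M_X$ then induce the same map to $\overline{M}_X$, hence differ by a homomorphism $\bN\to \cO_X^{\times}$. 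Using the splitting $\bZ^{r+1}\cong \bZ\oplus \bZ^{r}$ of the diagonal at the level of associated groups, I can absorb this unit into a modification of the chart $\bN^{r+1}\to M_X$, obtaining a chart for $f$ that is literally subordinate to the diagonal $\bN\to \bN^{r+1}$. Now set $\bA^{1}=\Spec k[\bN]$, $\bA^{r+1}=\Spec k[\bN^{r+1}]$, let $\phi_r$ be the toric morphism induced by the diagonal, and put $S'=S\times_{\bA^{1}}\bA^{r+1}$ in the fine saturated category, where $S\to \bA^{1}$ is the chosen chart; after localising so that $\overline{M}_S$ is generated by the chart, the fs fibre product has $\overline{M}_{S'}\cong \bN^{r+1}$ and $S'\to \bA^{r+1}$ is strict, while $S\to \bA^{1}$ is strict by construction. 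The chart on $X$ gives a strict map $X\to \bA^{r+1}$ which, by the commutativity arranged above, is compatible with $f$, hence factors through a strict morphism $X\to S'$. Finally, Kato's structure theorem (applied on a smooth strict presentation so that it is available for algebraic stacks, cf. \cite{kato,Ols}) applies here since the kernel of $\bZ\to \bZ^{r+1}$ vanishes and the cokernel $\bZ^{r}$ is torsion-free, and it shows that after a further strict étale --- indeed smooth --- localisation the morphism $X\to S\times_{\Spec k[\bN]}\Spec k[\bN^{r+1}]=S'$ is smooth. This yields the required diagram. The main obstacle I anticipate is exactly this chart bookkeeping: getting the charts neat and subordinate to the diagonal, checking that the horizontal maps come out strict (which is where the localisation making $\overline{M}_S$ globally $\bN$ is used), and verifying that Kato's theorem applies in the stacky generality we need.

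For the last assertion, assume in addition that $S$ is log smooth over $k$. Then applying Kato's structure theorem to $S\to \Spec k$ with the chart $P=\overline{M}_{S,s}\cong \bN$ at $s$ --- whose associated group $\bZ$ is torsion-free --- shows that the chart $\bN\to M_S$ above can be chosen so that $S\to \Spec k[\bN]=\bA^{1}$ is itself smooth. Since $\phi_r$ is flat (by miracle flatness), the base change $S'\to \bA^{r+1}$ is then also smooth, so re-running the construction with this chart produces a diagram in which all the horizontal maps are smooth, completing the proof.
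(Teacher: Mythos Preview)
Your proposal is correct and follows essentially the same approach as the paper: both directions rest on Kato's local structure theorem for log smooth morphisms, and the paper simply outsources the chart bookkeeping you spell out to the proof of \cite[Lemma~2.2]{olsson2}. One small remark: in the last paragraph, the flatness of $\phi_r$ is a red herring --- smoothness of $S'\to \bA^{r+1}$ follows directly because it is the base change of the smooth map $S\to \bA^1$ along $\phi_r$, and smoothness is preserved under arbitrary base change.
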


\begin{proof}
Clearly every morphism that admits charts as in the statement satisfies the conditions of Definition \ref{def:simple.log.ss.1}.
The converse follows from the fact that we can find local charts around a geometric point $x\to X$ and around its image $s\to S$ using the monoids $\overline{M}_{X,x}$ and $\overline{M}_{S,s}$ and the homomorphism $\overline{M}_{S,s}\to \overline{M}_{X,x}$. See the proof of \cite[Lemma 2.2]{olsson2} for details. Finally, the last claim follows from existence of charts as in the previous sentence, and from the fact that, because of log smoothness of $S$, the chart morphism $S\to \bA^1$ can be chosen to be smooth.
%that if $S$ is log smooth over $k$, then every chart morphism $S\to \bA^1$ obtained from stalks $\overline{M}_{S,s}\cong \bN$ is smooth. This can be checked as follows: we can assume that $S$ is a scheme, and since $S$ is log smooth over $k$, \'etale locally around every point $s\in S$ there is some fine saturated monoid $P$ and a strict and smooth map $\varphi\colon S\to \Spec k[P]$. Moreover, since $\overline{M}_{S,s}\cong \bN$ there is a surjective homomorphism of monoids $P\to \bN$, and choosing a section $\bN\to P$ we obtain a morphism $\Spec k[P]\to \bA^1$, such that the composite $S\to \bA^1$ is strict at $s$. Therefore $\Spec k[P]\to \bA^1$ is also a strict morphism at the point $\varphi(s)$, and this is easily seen to imply that $\Spec k[P]\to \bA^1$ is smooth around $\phi(s)$. Therefore, after possibly localizing further on $S$, the resulting morphism $S\to \bA^1$ is smooth at $s$.
\end{proof}

\begin{example}\mbox{ }
\label{exexex}
\begin{itemize}
\item The toric maps $\phi_r\colon \bA^{r+1}\to \bA^1$ of the definition, given explicitly by the formula $(x_1,\hdots, x_{r+1})\mapsto x_1\cdots x_{r+1}$ are simple log semistable morphisms (and this is a sort of ``universal'' case).
\item The degeneration of elliptic curves to a nodal rational curve of Example \ref{example:elliptic}, where base and total space are equipped with the natural log structures, is a simple log semistable morphism. 
\item Given any smooth variety $X$ with a normal crossings divisor $D\subseteq X$, we have a morphism $X\to [\bA^1/\bG_m]$, given by the divisor $\cO(D)$ with its tautological global section (recall that $[\bA^1/\bG_m]$ can be seen as the stack parametrizing line bundles with a global section). This is a simple log semistable morphism. 
\end{itemize}
\end{example}

Note that if $f\colon X\to S$ is a simple log semistable morphism and $X$ has affine stabilizer groups, then we are in the situation of Section \ref{sec:simple.log.ss}, and we can apply Theorem \ref{thm:main} to the total space $X$. Moreover, in this case the stacks $X_n=\radice[n]{X'}$ are isomorphic to the fibered products $X\times_S \radice[n]{S}$, and analogously at the limit we have $X_\infty\simeq X\times_S \irs{S}$.

\begin{theorem}\label{thm:main1}
Let $X, S$ be fine saturated log algebraic stacks over $k$, with $S$ log smooth and $X$ with affine stabilizer groups, and let $f\colon X\to S$ be a quasi-compact simple log semistable morphism of Deligne--Mumford type (i.e. for a scheme $T$ and a map $T\to S$, the base change $X\times_S T$ is a Deligne--Mumford stack).

Then for every geometric point $\xi \to \irs{S}$, the equivalence $\Phi$ of Theorem \ref{thm:main} restricts to an equivalence of dg-categories on the fibers $\Phi_\xi \colon  \Db{(X_\infty^\val)_\xi}\to \Db{(\radice[\infty]{X})_\xi}$.
\end{theorem}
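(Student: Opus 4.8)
The plan is to reduce the statement to the local model $\phi_r\colon \bA^{r+1}\to \bA^1$ and then invoke the fiberwise form of the derived McKay correspondence established in \cite{BP}, globalizing it with the same descent and base-change machinery used in the proof of Theorem \ref{thm:main}. First I would reduce to the case where $\xi$ lies over the closed point $0\in \irs{S}$, since over the open part where the log structure of $S$ is trivial the morphism $f$ is strict and smooth, hence the root stacks and log blow-ups are trivial there and $\Phi$ restricts to the identity on the (common) fiber. So from now on we may assume $\xi$ is the distinguished lift of a point $s\to S$ where $\overline{M}_{S,s}\cong\bN$.

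Next I would set up the fibers concretely. By the discussion at the end of the section, $X_n\simeq X\times_S\radice[n]{S}$ and $X_\infty\simeq X\times_S\irs{S}$; the fiber of $\irs{S}$ over $s$ with its reduced structure is the point $0\in\irs{S}$, which is a gerbe-type object, and pulling back the whole tower $\{(X_{k_n})_{\cT_{k_n}}\}$ and $\{\radice[k_n]{X}\}$ along $0\to\irs{S}$ produces the two inverse systems whose colimits of $\Db{-}$ compute the two sides. Using Lemma \ref{def:simple.log.ss}, strict-smooth locally we have a cartesian square expressing $X$ (and hence $X_{k_n}$, $(X_{k_n})_{\cT_{k_n}}$, $\radice[k_n]{X}$, and their fibers over $0$) as a flat strict base change of the corresponding toric objects over $\bA^{r+1}\to\bA^1$. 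Since $0\to\irs{S}$ is also obtained by strict base change from $0\to\irs{\bA^1}$, the fiber $\Phi_\xi$ is, locally, a flat base change of the fiberwise Fourier--Mukai functor $\Db{((\bA^{r+1}/\mu_{k_n}^r)_{\cT_{k_n}})_0}\to\Db{([\bA^{r+1}/\mu_{k_n}^r]_0)}$, exactly as in Proposition \ref{prop:fm.local}. By descent for $\Db{-}$, it then suffices to prove the local, finite-index statement.

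The local finite-index statement is: for every $n$ and every (symmetric) unimodular triangulation $\cT$ of $n\Delta^r$, the Fourier--Mukai functor of Proposition \ref{prop:mckay} restricts to an equivalence on the fibers over $0\in\bA^1$, i.e.\ on the central fibers of the families $(\bA^{r+1}/\mu_n^r)_{\cT}\to\bA^1$ and $[\bA^{r+1}/\mu_n^r]\to\bA^1$. This is precisely the content of the results of \cite{BP}: the equivalence furnished by Kawamata's derived McKay correspondence \cite[Theorem 4.2]{Ka} for these toric crepant resolutions is compatible with the natural functions to $\bA^1$ (the diagonal monomial), and hence restricts to the scheme-theoretic (or stack-theoretic) central fibers. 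Concretely I would check that the central fiber of $(\bA^{r+1}/\mu_n^r)_{\cT}$ is a reduced, seminormal crossings object whose components are the toric boundary strata indexed by $\cT$, that the central fiber of $[\bA^{r+1}/\mu_n^r]$ is the corresponding root-stack enhancement of the union of coordinate hyperplanes, and that the structure sheaf of the fiber product restricts to the structure sheaf of the fiber product of the central fibers — this is where one uses that everything is toric, so the (fine saturated) fibered product agrees with the normalization of the naive one, and that the central fibers are cut out by a principal (non-zero-divisor) equation pulled back from $\bA^1$, so forming fibers commutes with the pull-push.

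Finally I would assemble the pieces: the local fiberwise equivalences of \cite{BP} are compatible with the transition maps of the cofinal system $\{(k_n,\cT_{k_n})\}$ — this follows from Proposition \ref{prop:FM.compatible} applied to the central fibers, using once more flat base change along the maps $\radice[k_{n'}]{X}\to\radice[k_n]{X}$ and the rational-singularities argument that $c_*\cO=\cO$ there. Passing to the colimit over $n$ and using the descent identification of $\Phi_\xi$ as a colimit of the local fiberwise functors (together with Remark \ref{rmk:cofinal.subsystem} and Proposition \ref{prop:global.iso.val} applied to the fibers) yields that $\Phi_\xi$ is an equivalence. \textbf{The main obstacle} I anticipate is the bookkeeping needed to show that the central fibers of the towers $\{(X_{k_n})_{\cT_{k_n}}\}$ and $\{\radice[k_n]{X}\}$ over $0\in\irs{S}$ are genuinely the flat strict base changes of their toric-local models and that $\Phi_\xi$ is compatibly a colimit of the \cite{BP}-equivalences — i.e.\ checking that restriction to the fiber over $0$ commutes with all the pull-push operations defining $\Phi$, which hinges on the central fiber being a Cartier divisor pulled back from the (stacky) base and on the flatness built into the simple log semistable hypothesis.
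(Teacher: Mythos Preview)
Your overall strategy is right in spirit, but there are two genuine gaps that the paper has to work around, and your sketch glosses over both.

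\textbf{First gap: the fiber over $\xi$ is not a flat base change of the toric central fiber.} You write that ``$0\to\irs{S}$ is also obtained by strict base change from $0\to\irs{\bA^1}$'', but this is false unless $S$ is one-dimensional. Locally $S\cong \bA^q\times\bA^1$ with $q\geq 0$, and the preimage of $0\in\irs{\bA^1}$ in $\irs{S}$ is the whole $\bA^q$-direction, not the single point $\xi$. Consequently the fiber $(\radice[k_n]{X})_{\xi_{k_n}}$ is \emph{not} a flat strict base change of the toric central fiber $[\bA^{r+1}/\mu_{k_n}^r]_0$, so Lemma~\ref{lem:fibered.prod} and the descent argument of Proposition~\ref{prop:fm.local} do not apply to the fibers as you suggest. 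The paper instead pulls back along the \'etale presentation $\overline{S}_{k_n}=\bA^{q+1}\to\radice[k_n]{S}$ and proves a higher-codimension version of the \cite{BP} restriction result (Proposition~\ref{thm:bp}, over $\bA^N$ with $N=q+1$), which lets one restrict the already-established global equivalence $F_n$ to the fiber over a single point $p\in\bA^{q+1}$.

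\textbf{Second gap: compatibility of the fiberwise equivalences.} You claim compatibility ``follows from Proposition~\ref{prop:FM.compatible} applied to the central fibers, using once more flat base change''. But the transition maps $\radice[k_{n'}]{S}\to\radice[k_n]{S}$ on presentations look like $t\mapsto t^d$, so the square relating $(\cX_{k_{n'}})_p$, $(\cX_{k_n})_p$ and the total spaces is \emph{not} cartesian (the scheme-theoretic preimage of $p$ is non-reduced). Hence the flat base change argument underlying Proposition~\ref{prop:FM.compatible} fails on fibers. The paper's Proposition~\ref{proprop}(2) handles this by passing to adjoints: one checks $(F_n)_p\circ (v_p)_*\simeq (w_p)_*\circ (F_{n'})_p$ after composing with the faithful pushforward $(j_n)_*$ along the inclusion of the fiber, using the intertwining $F_n\circ (i_n)_*\simeq (j_n)_*\circ (F_n)_p$ from \cite{BP} together with Proposition~\ref{prop:FM.compatible} on total spaces. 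This step is not a routine base change and needs to be supplied.
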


\begin{remark}
{The log smoothness and ``of Deligne--Mumford type'' assumptions in Theorem \ref{thm:main1} are included in order to use results of \cite{BP} about restricting Fourier-Mukai functors (see below for more details), that are stated for families whose total space is a smooth Deligne--Mumford stack. It is possible that these assumptions can be relaxed.}
\end{remark}

\begin{remark}\label{rmk:trivial.fibers}
Note that the statement is obvious if $\xi\to \irs{S}\to S$ is in the open locus where the log structure of $S$ is trivial, because in that case both fibers $(\radice[\infty]{X})_\xi$ and $(X_\infty^\val)_\xi$ are canonically isomorphic to the fiber $X_\xi$ of $f\colon X\to S$.
\end{remark}
Proving Theorem \ref{thm:main1} will require some preparation. We start by noting that with our hypotheses, in the local charts of Lemma \ref{def:simple.log.ss} we can assume that the maps $S\to \bA^1$ and $X\to S'$ (and therefore also the composite $X\to \bA^{r+1}$) are smooth in the classical sense. Note that this assures that both the root stacks $\radice[k_n]{X}$ and the log blow-ups $(X_{k_n})_{\cT_{k_n}}$ (constructed in the previous section) are smooth for every $n$, {because they locally have a smooth map towards $\radice[k_n]{\bA^{r+1}}$ and $(\bA^{r+1}_{k_n})_{\cT_{k_n}}$ respectively, and these are smooth.} 

Now because of smoothness of $S\to \bA^1$ and since the statement is local on $S$, we can assume that $S\cong \bA^q\times \bA^1$ for some $q$, with the map $S\to \bA^1$ being the projection.
Note also that, as mentioned in Remark \ref{rmk:trivial.fibers}, since the log structure of $\bA^1$ is trivial outside of the origin, the only points over which there is something to prove are the ones in the preimage of $0\in \bA^1$. Let us fix a point $s\in S$ in this preimage (so that $s$ is of the form $(s',0)$ with $s'\to \bA^q$), and denote by $\xi\to \irs{S}$ the unique lift of $s$ to $\irs{S}$.

We have to show that if we restrict our modified families to the fiber over $\xi$, the equivalence $\Db{X_\infty^\val}\simeq \Db{\radice[\infty]{X}}$ will restrict to an equivalence of the categories of perfect complexes of the fibers (in a sense to be specified).

For this purpose, we will use the following result, that is an immediate generalization of (part of) \cite[Theorem 1.1]{BP}.

\begin{proposition}\label{thm:bp}
Let $\cX$ and $\cY$ be smooth Deligne--Mumford stacks, $f\colon \cX\to \bA^N$ and $g\colon \cY\to \bA^N$ two morphisms, and $\cF$ a complex of coherent sheaves on the fibered product $\cZ:=\cX\times_{\bA^N}\cY$, with proper support over $\cX$ and $\cY$.

If the Fourier-Mukai functor $\Phi_{\cF}\colon \Db{\cX}\to \Db{\cY}$ corresponding to $\cF$ is an equivalence, then the pullback of its kernel to $\cX_p\times \cY_p$ (the fibers over a geometric point $p\in \bA^N$ of $f$ and $g$) also induces an equivalence $\Phi_p\colon \Db{\cX_p}\to \Db{\cY_p}$.
\end{proposition}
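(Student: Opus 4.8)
The plan is to run the ``restriction of a Fourier--Mukai equivalence to a fibre'' argument of \cite[Theorem 1.1]{BP}, adapted to smooth Deligne--Mumford stacks and to the base $\bA^N$. The only input from the geometry is the relative composition formula for kernels: a complex $\cF$ on $\cX\times_{\bA^N}\cY$ with support proper over both factors induces a functor $\Phi_\cF\colon\Db{\cX}\to\Db{\cY}$, and given a second kernel $\cG$ on $\cY\times_{\bA^N}\cX$ one has $\Phi_\cG\circ\Phi_\cF\simeq\Phi_{\cG\ast\cF}$, where $\cG\ast\cF=p_{13*}\big(p_{12}^*\cF\otimes p_{23}^*\cG\big)$ is the convolution over $\bA^N$, an object on $\cX\times_{\bA^N}\cX$; the proof is the standard one, all the pushforwards being proper on the relevant supports, and the same holds with the roles of $\cX$ and $\cY$ exchanged. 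Throughout, $\cF$ is perfect (a bounded complex of coherent sheaves on a smooth stack).

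First I would show that the quasi-inverse of $\Phi_\cF$ is again a relative Fourier--Mukai functor. Since $\cX$ and $\cY$ are smooth, Grothendieck duality for the two projections of $\cX\times_{\bA^N}\cY$ (proper on $\supp\cF$) produces an explicit kernel $\cG$ on $\cY\times_{\bA^N}\cX$ — a twist of the derived dual $\cF^\vee$ by the relevant relative dualizing complex and a shift — with $\Phi_\cG$ the right adjoint of $\Phi_\cF$, hence (as $\Phi_\cF$ is an equivalence) a two-sided quasi-inverse. As the unit and counit of this adjunction are then isomorphisms, the composition formula gives $\Phi_{\cG\ast\cF}\simeq\id_{\Db{\cX}}\simeq\Phi_{\Delta_{\cX,*}\cO_\cX}$ and $\Phi_{\cF\ast\cG}\simeq\id_{\Db{\cY}}\simeq\Phi_{\Delta_{\cY,*}\cO_\cY}$, where $\Delta_\cX$, $\Delta_\cY$ are the relative diagonals; and since in this setting kernels inducing isomorphic functors are isomorphic, we conclude $\cG\ast\cF\simeq\Delta_{\cX,*}\cO_\cX$ on $\cX\times_{\bA^N}\cX$ and $\cF\ast\cG\simeq\Delta_{\cY,*}\cO_\cY$ on $\cY\times_{\bA^N}\cY$.

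Next comes the base-change step. Writing $\cF_p$, $\cG_p$ for the (derived) restrictions of $\cF$, $\cG$ to the fibres $\cX_p\times\cY_p\simeq(\cX\times_{\bA^N}\cY)\times_{\bA^N}\{p\}$ and $\cY_p\times\cX_p$ over $p\in\bA^N$, I would check that convolution commutes with restriction to the fibre: the triple fibered products over $\bA^N$ restrict to $\cX_p\times\cY_p\times\cX_p$ and $\cY_p\times\cX_p\times\cY_p$, and restriction commutes with the proper pushforward $p_{13*}$ (flat base change together with properness of the supports) and with the derived tensor product. Applying this to the two identities above, and using that restriction of $\Delta_{\cX,*}\cO_\cX$ is $\Delta_{\cX_p,*}\cO_{\cX_p}$, yields $\cG_p\ast\cF_p\simeq\Delta_{\cX_p,*}\cO_{\cX_p}$ and $\cF_p\ast\cG_p\simeq\Delta_{\cY_p,*}\cO_{\cY_p}$. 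By the composition formula once more, $\Phi_{\cF_p}$ and $\Phi_{\cG_p}$ are mutually quasi-inverse, so $\Phi_{\cF_p}=\Phi_p$ is an equivalence of dg-categories; and since the fibres are again smooth Deligne--Mumford stacks we have $\Db{\cX_p}=D^b_{dg}(\cX_p)$ and $\Db{\cY_p}=D^b_{dg}(\cY_p)$, which is the stated conclusion.

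The main obstacle I expect is the base-change step: one must ensure that $\cX_p\times\cY_p$ really is the derived fibre of $\cZ=\cX\times_{\bA^N}\cY$ and that $\cF_p$ is its honest derived restriction, i.e. enough Tor-independence of $f$ (or $g$) with $\{p\}\hookrightarrow\bA^N$. In the application to Theorem \ref{thm:main1} this is automatic: after the local reductions recorded just before the statement, the maps $\cX=\radice[k_n]{X}\to\bA^N$ and $\cY=(X_{k_n})_{\cT_{k_n}}\to\bA^N$ are flat, so every fibre and fibered product occurring is underived; in the stated generality one simply works in the derived category throughout. A secondary, purely bookkeeping point is to have Grothendieck duality and the Fourier--Mukai formalism available for the non-quasi-projective smooth Deligne--Mumford stacks at hand, which is standard — and it is precisely for this ``generalizable'' form of the argument that we rely on \cite{BP}.
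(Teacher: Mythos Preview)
Your outline is correct and follows the same route as the paper, which simply says that the argument of \cite[Section 2]{BP} carries over verbatim once one observes that the closed immersions $\cX_p\hookrightarrow\cX$ and $\cY_p\hookrightarrow\cY$ are now regular of codimension $N$ (hence of finite Tor-dimension), rather than of codimension $1$. Two small remarks: first, the step ``kernels inducing isomorphic functors are isomorphic'' should be phrased more carefully---in \cite{BP} one has an actual \emph{morphism} of kernels (the unit/counit) inducing an isomorphism of functors, and it is this morphism that is then shown to be an isomorphism; second, the hypotheses of the proposition do not guarantee that the fibres $\cX_p,\cY_p$ are smooth, so your final sentence identifying $\Perf$ with $D^b_{dg}$ on the fibres is not justified in general (nor is it needed---the statement is about $\Perf$).
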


\begin{proof}
The proof in \cite[Section 2]{BP} generalizes word for word, except that the embeddings $\cX_p\to \cX$ and $\cY_p\to \cY$ are now regular embeddings of codimension $N$ (that still have finite tor dimension). Note also that in that paper things are stated for the bounded derived category, but the result holds also for the category of perfect complexes. 
\end{proof}

\begin{remark}
\label{rem:intert} 
{Note that the two Fourier-Mukai functors 
$\Phi_\cF$ and $\Phi_p$ are  {compatible with} the push-forwards along the closed immersions 
$
i: \cX_p\to \cX, \, \, j: \cY_p\to \cY.$  
That is, there is a natural equivalence 
$
\Phi \circ i_* \simeq j_* \circ \Phi_p. 
$
We refer the reader to \cite[Lemma 2.6]{BP} for a proof of this fact.}
\end{remark}

We will apply Proposition \ref{thm:bp} to diagrams of the form
$$
\xymatrix{
\radice[n]{X}\ar[rd] & & (X_n)_\cI\ar[ld]\\
&X_n\ar[d] &\\
& S\cong \bA^{q}\times \bA^1 &  
}
$$
where as usual $(X_n)_\cI$ denotes a log blow-up ({which will be smooth}), and where we will take $\cF$ to be the pushforward to $\radice[n]{X}\times_{\bA^{q+1}} (X_n)_\cI$ of the structure sheaf of the fibered product $\radice[n]{X}\times_{X_n} (X_n)_\cI$.

The proof of Theorem \ref{thm:main1} will involve again  a limit argument, and will make  use of the sequence $\{(k_n,\cT_{k_n})\}_{n\in \bN}$ of unimodular triangulations and the corresponding log blow-ups $(X_{k_n})_{\cT_{k_n}}$ that were constructed in Section \ref{sec:globalizing}. Denote by $(\irs{X})_\xi$ and $(X_\infty^\val)_\xi$ the fibers of $\irs{X}\to \irs{S}$ and $X^\val_\infty\to \irs{S}$ at the point $\xi\to \irs{S}$. Note that for every $n$ we also have a unique point $\xi_n\to \radice[n]{S}$ over $s\to S$, and we can consider the fibers $(\radice[n]{X})_{\xi_n}$ and $((X_{k_n})_{\cT_{k_n}})_{\xi_{k_n}}$.

\begin{lemma}
We have canonical isomorphisms $$(\irs{X})_\xi\simeq \varprojlim_n (\radice[k_n]{X})_{\xi_{k_n}}$$ and $$(X^\val_\infty)_\xi \simeq \varprojlim_n ((X_{k_n})_{\cT_{k_n}})_{\xi_{k_n}}$$ that induce equivalences of dg-categories
$$
\Db{(\irs{X})_\xi}\simeq \varinjlim_n \Db{(\radice[k_n]{X})_{\xi_{k_n}}}
$$
and 
$$
\Db{(X^\val_\infty)_\xi}\simeq \varinjlim_n \Db{((X_{k_n})_{\cT_{k_n}})_{\xi_{k_n}}}.
$$
\end{lemma}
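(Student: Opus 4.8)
The plan is to deduce both statements formally from the fact that, in the $2$-category of fibered categories over $\Sch/k$, fiber products commute with filtered inverse limits, together with the presentations of the pro-objects involved that were already established.

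First I would record the relevant inverse systems and their compatibilities. Since $\{k_n\}_{n\in\bN}$ is cofinal in $\bN$ ordered by divisibility, we have $\irs{S}\simeq\varprojlim_n\radice[k_n]{S}$ and $\irs{X}\simeq\varprojlim_n\radice[k_n]{X}$ (property $(1)$ of Section \ref{sec:globalizing}), and by Proposition \ref{prop:global.iso.val} we have $X_\infty^\val\simeq\varprojlim_n(X_{k_n})_{\cT_{k_n}}$. All three systems map compatibly to $\{\radice[k_n]{S}\}_n$: the root stacks $\radice[k_n]{X}$ map to $X_{k_n}=\radice[k_n]{X'}\simeq X\times_S\radice[k_n]{S}$ and hence to $\radice[k_n]{S}$, and the log blow-ups $(X_{k_n})_{\cT_{k_n}}$ map to $X_{k_n}$ and hence to $\radice[k_n]{S}$; by construction in Sections \ref{sec:local} and \ref{sec:globalizing} these maps are compatible with the transition morphisms of all the systems and with the projections to $\irs{S}$. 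Finally, the geometric point $\xi\to\irs{S}$ is precisely the datum of the compatible family $\xi_{k_n}\to\radice[k_n]{S}$ (each supported at the unique point over $s$, cf. the remark preceding the lemma), so that $\xi\simeq\varprojlim_n\xi_{k_n}$.

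Next I would apply the commutation of $2$-categorical fiber products with cofiltered limits. From $(\radice[k_n]{X})_{\xi_{k_n}}=\radice[k_n]{X}\times_{\radice[k_n]{S}}\xi_{k_n}$, and the analogous identities for the blow-ups, passing to the inverse limit over $n$ gives
$$\varprojlim_n(\radice[k_n]{X})_{\xi_{k_n}}\simeq\bigl(\varprojlim_n\radice[k_n]{X}\bigr)\times_{\varprojlim_n\radice[k_n]{S}}\bigl(\varprojlim_n\xi_{k_n}\bigr)\simeq\irs{X}\times_{\irs{S}}\xi=(\irs{X})_\xi,$$
and likewise $\varprojlim_n((X_{k_n})_{\cT_{k_n}})_{\xi_{k_n}}\simeq X_\infty^\val\times_{\irs{S}}\xi=(X_\infty^\val)_\xi$. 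This proves the first two isomorphisms of fibered categories. The equivalences of dg-categories then follow immediately from Definition \ref{properf}: $(\irs{X})_\xi$ and $(X_\infty^\val)_\xi$ are pro-objects with the presentations just obtained, so $\Db{(\irs{X})_\xi}=\varinjlim_n\Db{(\radice[k_n]{X})_{\xi_{k_n}}}$ and $\Db{(X_\infty^\val)_\xi}=\varinjlim_n\Db{((X_{k_n})_{\cT_{k_n}})_{\xi_{k_n}}}$ by definition. (If one instead views $(\irs{X})_\xi$ as an honest, non-algebraic stack, the identification of its perfect complexes with the colimit follows by the same argument as Proposition \ref{prop.db.irs}, using that the $\radice[k_n]{X}$ and their fibers have compatible groupoid presentations.)

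The only delicate point — and the closest thing to an obstacle — is verifying that "fibers commute with the limit" in the present setting: this is a formal consequence of the fact that limits commute with limits, but one must check that the diagrams are genuinely compatible (the transition morphisms $\radice[k_{n+1}]{X}\to\radice[k_n]{X}$ lie over $\radice[k_{n+1}]{S}\to\radice[k_n]{S}$, and similarly for the blow-ups and their structure maps to the $X_{k_n}$), and that $\xi$ is the inverse limit of the $\xi_{k_n}$. Both are ensured by the explicit constructions carried out earlier.
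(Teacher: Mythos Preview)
Your proposal is correct and follows essentially the same approach as the paper's own proof, which is a one-liner invoking that $\xi\to\irs{S}$ is the inverse limit of the $\xi_n\to\radice[n]{S}$ together with the cofinality of $\{(k_n,\cT_{k_n})\}_{n\in\bN}$. You have simply spelled out the formal commutation of fiber products with cofiltered limits and the appeal to Definition \ref{properf} that the paper leaves implicit.
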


\begin{proof}
This follows from the fact that the point $\xi\to \irs{S}$ is the inverse limit of the points $\xi_n\to \radice[n]{S}$ (in the obvious sense), together with the cofinality properties of the sequence $\{(k_n,\cT_{k_n})\}_{n\in \bN}$.
\end{proof}

To prepare some notation for the proof of {Proposition \ref{proprop} below}, note that the root stacks $\radice[n]{S}=\bA^q\times \radice[n]{\bA^1}$ have compatible \'etale presentations, given by $\id\times \pi_n \colon \overline{S}_n:=\bA^q\times {\bA^1}\to \bA^q\times \radice[n]{\bA^1}$, where $\pi_n \colon \bA^1\to \radice[n]{\bA^1}$ is the usual presentation. Let us denote by $\cX_{k_n}$ and $\cY_{k_n}$ the pullbacks of $\radice[k_n]{X}$ and $(X_{k_n})_{\cT_{k_n}}$ to $\overline{S}_{k_n}$. By descent for $\Db{-}$, for every $n$ the equivalence
$
\Db{(X_{k_n})_{\cT_{k_n}}}\to \Db{\radice[k_n]{X}}
$ 
induces an equivalence
$$
F_{n}\colon \Db{\cY_{k_n}}\to \Db{\cX_{k_n}}.  
$$ 
{Further, by} Proposition \ref{prop:FM.compatible}, these {equivalences} are compatible with pullbacks along $\cX_{k_{n'}}\to \cX_{k_n}$ and $\cY_{k_{n'}}\to \cY_{k_n}$.

We denote by $p$ the unique point of the atlas $\overline{S}_{k_n}=\bA^q\times \bA^1$ in the preimage of $s\in \bA^q\times \bA^1$ with respect to the composite $\overline{S}_{k_n}\to \radice[k_n]{S}\to S=\bA^q\times \bA^1$. By Proposition \ref{thm:bp} the functor $F_{n}$ will induce an equivalence of dg-categories
$$
(F_{n})_p \colon \Db{(\cY_{k_n})_p}\to \Db{(\cX_{k_n})_p}.
$$
With slight abuse of notation, we will denote by $p$ any such point, independently on the index $n$.

\begin{proposition} \mbox{ }
\label{proprop} \begin{enumerate}
\item 
{The equivalence $\Db{(X_{k_n})_{\cT_{k_n}}}\to \Db{\radice[k_n]{X}}$  restricts to an equivalence on the fibers $$ \Db{((X_{k_n})_{\cT_{k_n}})_{\xi_{k_n}}}\to  \Db{(\radice[k_n]{X})_{\xi_{k_n}}}.$$ }
\item For every $n', n\in \bN$ with $n'\geq n$ the diagram of dg-categories
$$
\xymatrix{
\Db{((X_{k_n})_{\cT_{k_n}})_{\xi_{k_n}}} \ar[r]^-\simeq \ar[d] & \Db{(\radice[k_n]{X})_{\xi_{k_n}}}\ar[d] \\
\Db{((X_{k_{n'}})_{\cT_{k_{n'}}})_{\xi_{k_{n'}}}}\ar[r]^-\simeq & \Db{(\radice[k_{n'}]{X})_{\xi_{k_{n'}}}}
}
$$
is commutative.
\end{enumerate}
\end{proposition}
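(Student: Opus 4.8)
The plan is to verify both statements after base change along the fixed étale presentation $\overline{S}_{k_n}\to\radice[k_n]{S}$, where Proposition~\ref{thm:bp} (the generalization of~\cite{BP}) already produced the fiberwise equivalence $(F_n)_p$, and then to descend back to the stacky fibers.

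First I would record that $\xi_{k_n}\to\radice[k_n]{S}$ has residual gerbe $B\mu_{k_n}$ and that, since the $\mu_{k_n}$-action on $\overline{S}_{k_n}=\bA^q\times\bA^1$ fixes $p$, the map $p\to\xi_{k_n}$ is the universal $\mu_{k_n}$-torsor; consequently $(\cX_{k_n})_p$ and $(\cY_{k_n})_p$ are étale atlases of the stacky fibers $(\radice[k_n]{X})_{\xi_{k_n}}$ and $((X_{k_n})_{\cT_{k_n}})_{\xi_{k_n}}$, with \v{C}ech nerves the bar constructions of these $\mu_{k_n}$-actions. The Fourier--Mukai functor $\Psi_n\colon\Db{((X_{k_n})_{\cT_{k_n}})_{\xi_{k_n}}}\to\Db{(\radice[k_n]{X})_{\xi_{k_n}}}$ obtained by restricting the kernel (the structure sheaf of the fibered product of the two fibers over $(X_{k_n})_{\xi_{k_n}}$) pulls back, at each simplicial level of these nerves, to the kernel defining $(F_n)_p$. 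Since $\Db{-}$ satisfies flat descent (Section~\ref{sec:dg.categories}), $\Psi_n$ is the limit over $\Delta$ of the equivalences induced by $(F_n)_p$, hence an equivalence --- in particular it is a well-defined functor on perfect complexes. This proves part (1), and also identifies the functor on the fibers with the one studied in~\cite{BP} in the local toric model.

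For part (2) I would, by the same descent, reduce to the commutativity of $v_0^*\circ(F_n)_p\simeq(F_{n'})_p\circ w_0^*$ for the induced transition maps $v_0\colon(\cX_{k_{n'}})_p\to(\cX_{k_n})_p$ and $w_0\colon(\cY_{k_{n'}})_p\to(\cY_{k_n})_p$ on atlases, and then, using that all these stacks carry smooth charts over the toric models $\radice[k_n]{\bA^{r+1}}$ and $(\bA^{r+1}_{k_n})_{\cT_{k_n}}$ of Section~\ref{sec:local}, reduce further (via Lemma~\ref{lem:fibered.prod} and flatness of the chart morphisms, exactly as in Proposition~\ref{prop:fm.local}) to the analogous statement for these toric central fibers. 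That statement I would prove by repeating the argument of Proposition~\ref{prop:FM.compatible} with everything restricted to the fiber over $0\in\bA^1$, using Remark~\ref{rem:intert} to commute the functors past the closed immersions of the fibers. As in that proof, the comparison square fails to be cartesian --- now because the scheme-theoretic preimage of $0$ under $t\mapsto t^{k_{n'}/k_n}$ is non-reduced --- so I would introduce the honest fiber product $Z$ of the relevant span and verify $c_*\cO=\cO_Z$ together with $R^{>0}c_*\cO=0$; since the central fibers are reduced simple normal crossings divisors inside smooth toric stacks and $c$ is a toric modification, this follows as before from rational singularities of toric varieties~\cite[Theorem 5.2]{cox} applied to the irreducible components and glued along their toric strata by a \v{C}ech computation, after which flat base change for QCA stacks~\cite[Corollary 1.4.5]{drinfeld} finishes the proof. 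The main obstacle is precisely this step: bookkeeping the non-cartesian comparison squares through the passage to the (non-normal) central fibers and checking the higher-direct-image vanishing there; the rest is formal from Propositions~\ref{thm:bp} and~\ref{prop:FM.compatible} and descent. Once both parts are in hand, the colimit over $n$ of the commuting squares yields the equivalence $\Phi_\xi$ of Theorem~\ref{thm:main1}.
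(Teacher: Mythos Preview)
Your proof of part~(1) is essentially the paper's, with the descent along $p\to\xi_{k_n}$ made more explicit.

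For part~(2) your route diverges from the paper's and is harder. You propose to repeat the argument of Proposition~\ref{prop:FM.compatible} directly on the central fibers, and you correctly identify the crux: verifying $c_*\cO=\cO_Z$ (with higher vanishing) for the analogue of the map $c$ on the reducible, non-normal central fibers, via a component-wise rational-singularities argument glued by a \v{C}ech computation. This is plausible but not obviously routine, and you flag it yourself as the main obstacle. Your invocation of Remark~\ref{rem:intert} at this stage is also unclear: that remark concerns intertwining with pushforward along the closed immersion of the fiber into the total space, which does not naturally enter a repetition of Proposition~\ref{prop:FM.compatible} carried out purely on the fibers.

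The paper sidesteps the fiberwise cohomological computation entirely. Instead of redoing anything on the singular central fibers, it passes to quasi-coherent sheaves, takes adjoints (so the goal becomes $(F_n)_p\circ(v_p)_*\simeq(w_p)_*\circ(F_{n'})_p$), and then composes with the pushforward $(j_n)_*$ along the closed immersion of the fiber into the smooth total space. Since $(j_n)_*$ is faithful it reflects isomorphisms, so it suffices to prove the statement after applying $(j_n)_*$. There one only needs three already-established ingredients: the commutativity $(j_n)_*\circ(w_p)_*\simeq w_*\circ(j_{n'})_*$ (and similarly for $i,v$), the intertwining $F_n\circ(i_n)_*\simeq(j_n)_*\circ(F_n)_p$ from Remark~\ref{rem:intert}, and the adjoint form $F_n\circ v_*\simeq w_*\circ F_{n'}$ of Proposition~\ref{prop:FM.compatible} on the total spaces. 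Chaining these gives the result. This is exactly where Remark~\ref{rem:intert} earns its keep: it trades the hard fiberwise verification you are proposing for the total-space compatibility already proved in Section~\ref{sec:simple.log.ss}.
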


\begin{proof}
{Claim $(1)$ follows immediately by  applying Proposition \ref{thm:bp}  {after pulling back} to {the presentation $\overline{S}_n=\bA^{q+1}$} of $\radice[n]{S}$. This shows that the equivalence $\Db{(X_{k_n})_{\cT_{k_n}}}\to \Db{\radice[k_n]{X}}$  induces an equivalence $ \Db{((X_{k_n})_{\cT_{k_n}})_{\xi_{k_n}}}\to  \Db{(\radice[k_n]{X})_{\xi_{k_n}}}.$} 

{Let us prove next Claim $(2)$, that is,} that these equivalences for different values of $n$ are compatible, as in Proposition \ref{prop:FM.compatible}. 
As usual, by descent for $\Db{-}$ this can be checked after passing to the \'etale presentations $\overline{S}_{k_n}\to \radice[k_n]{S}$ and $\overline{S}_{k_{n'}}\to \radice[k_{n'}]{S}$, so that instead of the diagram in the statement we can look at the diagram
$$
\xymatrix@C=2cm{
\Db{( \cY_{k_n})_p} \ar[r]^{(F_n)_p}\ar[d]_{v_p^*} & \Db{(\cX_{k_n})_p}\ar[d]^{w_p^*} \\
\Db{( \cY_{k_{n'}})_p}\ar[r]^{(F_{n'})_p} & \Db{(\cX_{k_{n'}})_p}
}
$$
where the notation is explained above, except for the vertical maps, that are pullbacks along $v_p\colon (\cY_{k_{n'}})_p\to (\cY_{k_{n}})_p$ and $w_p \colon (\cX_{k_{n'}})_p\to (\cX_{k_{n}})_p$.

For every $n$, let us denote by $j_n\colon (\cX_{k_n})_p\to\cX_{k_n}$ and $i_n\colon (\cY_{k_n})_p\to \cY_{k_n}$ the inclusions of the  fibers. Note also that the maps $v_p$ and $w_p$ between the fibers are not  induced by $v\colon \cY_{k_{n'}} \to  \cY_{k_{n}}$ and $w\colon  \cX_{k_{n'}} \to  \cX_{k_{n}}$ via base change. Rather, there are commutative diagrams
$$
\begin{gathered}
\xymatrix{
(\cY_{k_{n'}})_p \ar[r]^ {v_p} \ar[d]_{i_{n'}} & (\cY_{k_{n}})_p\ar[d]^{i_{n}}  & (\cX_{k_{n'}})_p\ar[r]^ {w_p} \ar[d]_{j_{n'}} &  (\cX_{k_{n}})_p\ar[d]^{j_{n}} \\
\cY_{k_{n'}} \ar[r]^ {v} \ar[d] & \cY_{k_{n}}\ar[d] & 
 \cX_{k_{n'}} \ar[r]^ {w} \ar[d] & \cX_{k_{n}}\ar[d]\\
\bA^q\times \bA^1\ar[r] &\bA^q\times \bA^1 & \bA^q\times\bA^1\ar[r] & \bA^q\times\bA^1
}
\end{gathered}
$$
where the map $\bA^q\times\bA^1\to \bA^q\times\bA^1$ is given by $\id\times (t\mapsto t^d)$ for $d=k_{n'}/k_{n}$, and the squares are not cartesian. The preimage of the point $p$ in $\bA^q\times \bA^1$ consists of exactly one point, but is non-reduced, and the point $p$ itself gives a section (corresponding to the reduction of the fiber).

Let us prove that there is an equivalence 
$$
(w_p)^* \circ (F_n)_p  \simeq (F_{n'})_p \circ (v_p)^* : \Qcoh((\cY_{k_{n}})_p) \rightarrow \Qcoh(  (\cX_{k_{n'}})_p).   
$$
After this, we can pass to $\Db{-}$ by restricting to compact objects. 
In the last display, and throughout the proof, we will denote in the same way a functor and its Ind-completion (e.g. we write  $(F_n)_p$ instead of $\mathrm{Ind}((F_n)_p)$).

The advantage of working with quasi-coherent sheaves is that we can take adjoints 
and prove instead the equivalent statement 
$$
(F_n)_p \circ (v_p)_*  \simeq  (w_p)_* \circ (F_{n'})_p . 
$$ 
Note further that, since $(j_n)_*$ is faithful, it reflects isomorphisms.  This is true in any triangulated category: consider a morphism $f: F \to G $ and the distinguished triangle 
$$ (j_n)_* F \xrightarrow{(j_n)_* (f)} (j_n)_* G \to (j_n)_* (\mathrm{cone} f) .$$ Then $ (j_n)_*(f)$ is an isomorphism if and only if $(j_n)_* (\mathrm{cone} f )$ vanishes, and $(j_n)_* (\mathrm{cone} f)$ vanishes if and only if $\mathrm{cone} f$ vanishes.

Thus it is sufficient to show that there is a natural equivalence 
$$
(j_n)_* \circ (F_n)_p \circ (v_p)_*  \simeq (j_n)_* \circ (w_p)_* \circ (F_{n'})_p.
$$
This is a consequence of the following  three facts: 
\begin{itemize} 
\item By the commutativity of the two diagrams above, we have that 
$$
w_* \circ (j_{n'})_* \simeq (j_n)_* \circ (w_p)_* \quad  \text{ and } \quad  v_* \circ (i_{n'})_* \simeq (i_n)_* \circ (v_p)_*.
$$
\item {As explained in Remark \ref {rem:intert} we have an equivalence} 
$
F_n \circ (i_n)_* \simeq  (j_n)_*\circ  (F_n)_p
$ (and there is an analogous equivalence for the index $n'$). Recall for every $n$ we are denoting by $F_n$ the (Ind-completion of the) Fourier-Mukai functor $F_{n}\colon \Db{\cY_{k_n}}\to \Db{\cX_{k_n}}$.
\item By Proposition \ref{prop:FM.compatible} there is an equivalence:   
$
F_{n'} \circ v^*  \simeq w ^*\circ F_n. 
$
Taking adjoints we obtain an equivalence 
$
F_n \circ v_*  \simeq w_*\circ F_{n'}. 
$
\end{itemize}
Leveraging these three facts we obtain a chain of equivalences
\begin{multline*}
  (j_n)_* \circ (w_p)_* \circ (F_{n'})_p  \simeq w_* \circ (j_{n'})_* \circ (F_{n'})_p  \simeq 
w_* \circ F_{n'} \circ (i_{n'})_* \simeq  \\
 F_n \circ v_* \circ (i_{n'})_* \simeq F_{n} \circ  (i_n)_* \circ (v_p)_* \simeq (j_n)_* \circ (F_n)_p  \circ (v_p)_* 
\end{multline*}
and this concludes the proof. 
\end{proof}

By taking the colimit over the sequence $\{(k_n,\cT_{k_n})\}_{n\in \bN}$, we obtain an equivalence of dg-categories
$$
\varinjlim_n (F_{k_n})_p\colon \Db{(X^\val_\infty)_\xi}\to \Db{(\irs{X})_\xi}.
$$
This concludes the proof of Theorem \ref{thm:main1}.

\bibliographystyle{plain}
\bibliography{logmckay}

\end{document}